\def\R{\mathbb{R}}
\def\P{\mathbb{P}}
\def\p{\partial}
\newcommand{\dd}{\mathrm{d}}
\newcommand{\eps}{\varepsilon}
\newcommand{\mb}{\mathbf}
\newcommand{\E}{\mathbb{E}}
\numberwithin{equation}{section}
\newtheorem{lemma}{Lemma}[section]
\newtheorem{theorem}{Theorem}[section]
\newtheorem{proposition}{Proposition}[section]
\newtheorem{definition}{Definition}[section]
\newtheorem{corollary}{Corollary}[section]
\newtheorem{remark}{Remark}[section]
\begin{document}
\title{Convergence of Oja's online principal component flow}

\author[J.-G. Liu]{Jian-Guo Liu}
\address{Department of Mathematics and Department of
  Physics, Duke University, Durham, NC}
\email{jliu@math.duke.edu}

\author[Z. Liu]{Zibu Liu}
\address{Department of Mathematics, Duke University, Durham, NC}
\email{zibu.liu@duke.edu}

\keywords{machine learning, dimensionality reduction, online principal component analysis, gradient flow, stable manifold}
\maketitle
\begin{abstract}
Online principal component analysis (PCA) has been an efficient tool in practice to reduce dimension. However, convergence properties of the corresponding ODE are still unknown, including global convergence, stable manifolds, and convergence rate. In this paper, we focus on the stochastic gradient ascent (SGA) method proposed by Oja. By regarding the corresponding ODE as a Landau-Lifshitz-Gilbert (LLG) equation on the Stiefel manifold, we proved global convergence of the ODE. Moreover, we developed a new technique to determine stable manifolds. This technique analyzes the rank of the initial datum. Using this technique, we derived the explicit expression of the stable manifolds. As a consequence, exponential convergence to stable equilibrium points was also proved. The success of this new technique should be attributed to the semi-decoupling property of the SGA method: iteration of previous components does not depend on that of later ones.
As far as we know, our result is the first complete one on the convergence of an online PCA flow, providing global convergence, explicit characterization of stable manifolds, and closed formula of exponential convergence depending on the spectrum gap.       
\end{abstract}
\section{Introduction}
The recursive method for principal component analysis (PCA) proposed by Oja in \cite{oja1992principal,oja1985stochastic} pioneered the discussion of several online PCA algorithms and their applications in the field of machine learning. Suppose that $\mb{x}\in \R^n$ is a random variable with zero expectation and distribution $\nu$. PCA aims to find a group of orthonormal vectors $\mb{w_1},\ \mb{w_2},\ ,...,\mb{w_p}\in \R^n, 1\leq p\leq n$ such that they maximize 
\begin{align} \label{eq:rayleighquotient}
	R_p:=\E\left[\sum_{i=1}^p(\mb{w_i}^T\mb{x})^2\right].
\end{align}  
Equivalently, $\mb{w_1},\ \mb{w_2},\ ...,\ \mb{w_p}$ are the $p$ dominant eigenvectors of the covariance matrix
\begin{align}
	\mb{A} := \E\left[\mb{x}\mb{x}^T\right].
\end{align} 

If $\mb{A}$ is given, then PCA reduces to diagonalizing a symmetric matrix which is a standard linear algebra problem. Nevertheless, $\mb{A}$ is generally unknown in practice. Instead, only a series of independent samplings of the random variable $\mb{x}$ are available. Moreover, due to the limitation of storage, only several or a limited amount of samplings can be addressed at the same time. Therefore, an online version of the PCA algorithm is desired: it only requires several most recent samplings of $\mb{x}$ to complete the iteration and efficiently converges to eigenvectors. The term 'online' here means that only a limited amount of samplings of $\mb{x}$ are available in one iteration. 

To solve this problem, Oja \cite{oja1992principal,oja1985stochastic} derived the following online PCA algorithm which approximates dominant eigenvectors of $\mb{A}$ efficiently. Let $\mb{W}=[\mb{w}_1,\ \mb{w_2}, ...,\ \mb{w_p}]\in\R^{n\times p}$. Given a stream of data $\{\mb{x}(k)\}$ which are independent samples of the random variable $\mb{x}$, i.e., $\mb{x}(k)\sim \nu$ are independent, consider $\mb{W}(k),k=1,2,...$ which are defined as:
\begin{gather}\label{eq:onlinePCA}
\left\{
\begin{split}
\mb{W}(0)&\in O(n\times p),\\
\tilde{\mb{W}}(k) &= \mb{W}(k-1)+\eta_k\mb{x}(k)\mb{x}^T(k)\mb{W}(k-1),\ k\geq 1,\\
\mb{W}(k) &= \tilde{\mb{W}}(k)\mb{S}(k),\ k\geq 1.
\end{split}\right.
\end{gather} 
Here $\eta_k, k=1,2,...$ are real numbers representing learning rates and $\mb{S}(k)\in\R^{p\times p}$ are matrices depending on $\tilde{\mb{W}}(k)$ which orthonormalize $\tilde{\mb{W}}(k)$. Therefore, $\mb{W}^T(k)\mb{W}(k)=\mb{I_p}$ holds for all $k$,  where $\mb{I_p}$ is the identity matrix of size $p$. 

In \eqref{eq:onlinePCA}, if $\mb{S}(k)$ conducts the Gram-Schmidt orthonormalization (GSO) on the columns of $\tilde{\mb{W}}(k)$, then the stochastic gradient ascent (SGA) algorithm is derived \cite{oja1992principal}. To reduce the computational complexity of the SGA method, authors of \cite{oja1982simplified,oja1985stochastic} developed a first order approximation of the SGA scheme:
\begin{equation} \label{eq:SGA}
	\begin{aligned} 
	\mb{w_j}(k) &= \mb{w_j}(k-1) + \eta(k)\mb{x}^T(k)\mb{w_j}(k-1)[\mb{x}(k)-(\mb{x}^T(k)\mb{w_j}(k-1))\mb{w_j}(k-1)\\
	&-2\sum_{i=1}^{j-1}(\mb{x}^T(k)\mb{w_i}(k-1))\mb{w_i}(k-1)],\ j=1,\ 2,\ ...,\ p.
	\end{aligned}
\end{equation}
Because higher-order terms are omitted, this scheme does not preserve orthogonality. This approximated scheme is easy to apply in simulations and appropriate for neural network implementation \cite{oja1992principal}. The reason is that iteration of $\mb{w_j}$ only depends on $\mb{w_i}, i=1,2,...,j-1$. We will call this property the 'semi-decoupling' property.

From now on, we assume $p=n$, i.e., we consider extracting all eigenvectors instead of only the first several dominant ones. Due to the semi-decoupling property, results for $p<n$ are actually a consequence of the one for $p=n$. This will be explained in Section \ref{sec:p<n}. 

\subsection{The corresponding differential equation for the SGA method}
Formally omitting the higher-order terms and replacing $\mb{x}(k)\mb{x}^T(k)$ by its average $\mb{A}$ in \eqref{eq:SGA} as in \cite{oja1992principal}, 
 one can derive the following differential equation of $\mb{q_i}\in \R^n,\ i=1,2,...,n$:
\begin{gather}\label{eq:ndODE}
\left\{
\begin{split}
\mb{\dot{q_1}}&=\mb{Aq_1}-\mb{(q_1\cdot Aq_1)q_1},\\
\mb{\dot{q_j}}&=\mb{Aq_j}-\mb{(q_j\cdot Aq_j)q_j}-2\sum_{i=1}^{j-1}\mb{(q_i\cdot Aq_j)q_i},\ j=2,\ 3,\ ...,\ n.\\
\mb{q_i}(0)&=\mb{q_{i,0}},\ i=1,\ 2,\ ...,\ n.
\end{split}\right.
\end{gather}
Here $\mb{Q_0}:=[\mb{q_{1,0}}, \mb{q_{2,0}},..., \mb{q_{n,0}}]\in O(n)$.  One can rewrite \eqref{eq:ndODE} in matrices by denoting $\mb Q(t)=[\mb{q_1}(t), \mb{q_2}(t), ..., \mb{q_n}(t)]$, which yields
\begin{gather} \label{eq:mtrxODE_0}
\left\{
\begin{split}
\mb{\dot{Q}} &= \mb{AQ} -\mb{Q}\sum_{j=1}^n\mb{E_j}\mb{Q}^T\mb{AQ}\mb{E_j} - 2\mb{Q}\sum_{j=1}^n\sum_{k=1}^{j-1} \mb{E_k}\mb{Q}^T\mb{AQE_j},\\
\mb{Q}(0) &= \mb{Q_0}\in O(n).
\end{split}\right.
\end{gather}
Here $\mb{E_j}, j=1,2,...,n$ represent the matrix with 1 at $j$th row, $j$th column, but 0 at other positions. 

System \eqref{eq:mtrxODE_0} admits a unique solution by standard ODE theories. In fact, one can prove that system  \eqref{eq:mtrxODE_0} defines a flow on the Stiefel manifold $O(n)$:
\begin{lemma}\label{lmm:orthogonality}
	Suppose that $\mb{Q}(t)$ is the solution to the system \eqref{eq:mtrxODE_0}. Then for any initial value $\mb{Q_0}\in O(n)$, we have $\mb{Q}(t)\in O(n)$, i.e.
	\begin{align}
		\mb{Q}(t)\mb{Q}^T(t)=\mb{Q}^T(t)\mb{Q}(t) = \mb{I_n}.
	\end{align}
	Here $\mb{I_n}\in\R^{n\times n}$ is the identity matrix of size $n$.
\end{lemma}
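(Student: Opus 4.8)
The plan is to show that $\mb P(t):=\mb Q^T(t)\mb Q(t)$ satisfies a homogeneous linear matrix ODE, so that the zero initial deviation $\mb P(0)-\mb I_n=0$ is propagated in time by uniqueness.

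First I would recast the right-hand side of \eqref{eq:mtrxODE_0}. Put $\mb B:=\mb Q^T\mb A\mb Q$, which is symmetric because $\mb A=\E[\mb x\mb x^T]$ is. A direct computation identifies $\sum_{j}\mb E_j\mb B\mb E_j$ with the diagonal part of $\mb B$ and $\sum_{j}\sum_{k=1}^{j-1}\mb E_k\mb B\mb E_j$ with the strictly upper-triangular part of $\mb B$; writing $\mb B=\mb L+\mb D+\mb U$ for its strictly-lower / diagonal / strictly-upper splitting (so $\mb L=\mb U^T$ and $\mb D^T=\mb D$ by symmetry), \eqref{eq:mtrxODE_0} becomes
\[
\dot{\mb Q}=\mb A\mb Q-\mb Q\mb C,\qquad \mb C:=\mb D+2\mb U .
\]
The key observation is the ``self-adjointness'' identity
\[
\mb C+\mb C^T=(\mb D+2\mb U)+(\mb D+2\mb U^T)=2(\mb L+\mb D+\mb U)=2\mb B=2\mb Q^T\mb A\mb Q .
\]

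Next I would differentiate $\mb P=\mb Q^T\mb Q$. Using $\dot{\mb Q}=\mb A\mb Q-\mb Q\mb C$, the symmetry $\mb A=\mb A^T$, and then the identity above,
\[
\dot{\mb P}=\dot{\mb Q}^T\mb Q+\mb Q^T\dot{\mb Q}=2\mb Q^T\mb A\mb Q-\mb C^T\mb P-\mb P\mb C=(\mb C+\mb C^T)-\mb C^T\mb P-\mb P\mb C=\mb C^T(\mb I_n-\mb P)+(\mb I_n-\mb P)\mb C .
\]
Hence $\mb R:=\mb I_n-\mb P$ obeys the homogeneous linear ODE $\dot{\mb R}=-\mb C^T(t)\mb R-\mb R\mb C(t)$, whose coefficient $\mb C(t)$ is continuous on the maximal existence interval of $\mb Q$. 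Since $\mb R(0)=\mb I_n-\mb Q_0^T\mb Q_0=0$, uniqueness for linear ODEs (equivalently, Grönwall applied to $\tfrac{d}{dt}\|\mb R\|_F^2\le c(t)\|\mb R\|_F^2$) forces $\mb R(t)\equiv 0$, i.e. $\mb Q^T(t)\mb Q(t)=\mb I_n$. As $\mb Q(t)$ is square, this also gives $\mb Q(t)\mb Q^T(t)=\mb I_n$; moreover the resulting bound $\|\mb Q(t)\|\le\sqrt n$ rules out finite-time blow-up, so the solution is in fact global.

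The only genuine obstacle is the opening bookkeeping step: checking that the two combinatorial sums in \eqref{eq:mtrxODE_0} are exactly the diagonal and the strictly-upper-triangular parts of $\mb Q^T\mb A\mb Q$, and then using the symmetry of $\mb A$ to deduce $\mb C+\mb C^T=2\mb Q^T\mb A\mb Q$. Once this is secured, the reduction to a linear ODE with zero initial value, and hence the conclusion, is routine. (An equivalent route is to argue entrywise on \eqref{eq:ndODE}, proving $\tfrac{d}{dt}(\mb q_i\cdot\mb q_j)=0$ by induction on $\max(i,j)$; the matrix version above is simply more compact.)
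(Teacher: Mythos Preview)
Your argument is correct. The identification $\mb C=\mb D+2\mb U$ is exactly the upper-triangular matrix the paper later calls $\mb T(\mb Q)$ in \eqref{def:T}, and your key identity $\mb C+\mb C^T=2\mb Q^T\mb A\mb Q$ is right because $\mb A$ is symmetric. The resulting linear homogeneous ODE for $\mb R=\mb I_n-\mb Q^T\mb Q$ then forces $\mb R\equiv 0$ by uniqueness, exactly as you say.

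The paper takes a somewhat different route: it differentiates $\mb P:=\mb Q\mb Q^T$ (not $\mb Q^T\mb Q$) after first rewriting the system as $\dot{\mb Q}=\mb A\mb Q-\mb Q\mb Q^T\mb A\mb Q+\mb Q\mb\Sigma$ with $\mb\Sigma$ skew-symmetric. The skew-symmetry kills the $\mb\Sigma$ contribution and yields the \emph{closed} Riccati equation $\dot{\mb P}=\mb A\mb P+\mb P\mb A-2\mb P\mb A\mb P$, which has $\mb P\equiv\mb I_n$ as a solution; uniqueness finishes. The practical difference is this: your equation for $\mb R$ is linear but has $\mb Q(t)$-dependent coefficients, so it is not a closed system in $\mb R$ alone, whereas the paper's Riccati equation is autonomous in $\mb P$. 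That extra structure is what the paper exploits in the remark immediately following the proof to show that even \emph{non}-orthogonal full-rank initial data satisfy $\mb Q^T\mb Q\to\mb I_n$ exponentially. For the lemma itself, however, your linear-ODE argument is perfectly sufficient and arguably more transparent.
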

See proof of Lemma \ref{lmm:orthogonality} in Section \ref{sec:notations}. In this paper, we will provide a complete convergence analysis of \eqref{eq:mtrxODE_0}, including global convergence, stable manifolds, and convergence rate. In the following part of the introduction, we will first review literature and then highlight our contribution to the research of online PCA. 

\subsection{Previous results and unsolved problems}
Another PCA flow developed by Oja and Brocket \cite{brockett1991dynamical} also attracted much attention:
\begin{gather} \label{eq:Brockett}
	\left\{
\begin{split}
\mb{\dot{Q}}&=\mb{AQD}-\mb{QDQ}^T\mb{AQ},\\
\mb{Q}(0) &= \mb{Q_0}\in O(n).
\end{split}\right.	
\end{gather}
where $\mb{D}\in\R^{n\times n}$ is a symmetric matrix. As \eqref{eq:mtrxODE_0}, \eqref{eq:Brockett} is also invariant on the Stiefel manifold. Global convergence to equilibria of \eqref{eq:Brockett} was derived first in \cite{helmke2012optimization} under the assumption that $\mb{D}$ is diagonal. Later on, if $\mb{D}$ is only symmetric, global convergence was again verified in \cite{yoshizawa2001convergence}. However, to our best knowledge, similar result on the flow \eqref{eq:mtrxODE_0} is unknown. Only local convergence result \cite{oja1985stochastic} and the global convergence of the first component $\mb{p_1}$ \cite{li2017diffusion} are available. The barrier of a global analysis of \eqref{eq:mtrxODE_0} is probably due to its complicated matrix representation, unlike \eqref{eq:Brockett} which is clean. 

Some other convergence properties of both \eqref{eq:Brockett} and \eqref{eq:mtrxODE_0} also remain unknown. First, the stable manifolds of both \eqref{eq:Brockett} and \eqref{eq:mtrxODE_0} (if exists) are undetermined \cite{blondel2004unsolved}. Second, a global convergence rate was not computed for either \eqref{eq:Brockett} or \eqref{eq:mtrxODE_0}. Only local asymptotic analysis was conducted on both \eqref{eq:Brockett} \cite{helmke2012optimization,yoshizawa2001convergence} and \eqref{eq:mtrxODE_0} \cite{oja1982simplified,sanger1989optimal}. In conclusion, only a few results were derived for both \eqref{eq:Brockett} and \eqref{eq:mtrxODE_0}.

\subsection{Main results} 
First, we studied convergence property of \eqref{eq:mtrxODE_0}. In analog to the Landau-Lifshitz-Gilbert equation \cite{landau1998theory,gilbert1955lagrangian,gilbert2004phenomenological}, we recast \eqref{eq:mtrxODE_0} in the following form on a Riemannian manifold:  
\begin{align*}
\dfrac{\dd \mb{m}(t)}{\dd t} = \widehat{\dot{\mb{m}}}_1(\mb{m}(t),t) -\nabla_g\mathcal{E}(\mb{m}(t)).
\end{align*}
See \eqref{eq:LLGonM} and Proposition \ref{prop:LLG}. This equation consists of two parts: the Hamiltonian part $\widehat{\dot{\mb{m}}}_1$ and the dissipative part $-\nabla_g\mathcal{E}$, which is the variation of $-\mathcal{E}$ in some sense. They are perpendicular under the  Riemannian metric $g$. The critical relationship between them is that the Hamiltonian conserved by the Hamiltonian part is exactly the free energy $\mathcal{E}$ minimized by the dissipative part. 

To regard \eqref{eq:mtrxODE_0} as an LLG equation on the Stiefel manifold, we specified the free energy $\mathcal{E}$ and the Riemmanian metric $g$. Instead of directly using the Rayleigh quotient, we adopted a weighted version of it:
\begin{align*}
	\mathcal{E}(\mb{Q};\mb{A},\mb{N}) =\mathrm{tr}(\mb{N}\mb{Q}^T\mb{A}\mb{Q}).
\end{align*}
See \eqref{def:WRQ}. Here $\mb{N}$ is a given diagonal matrix with entries on the diagonal line aligned in a descending order. The critical advantage of utilizing $\mb N$ here is that it helps align the eigenbasis in order. In fact, by the Wielandt-Hoffman inequality (see Lemma \ref{lmm:WH}), $\mathcal{E}(\mb{Q};\mb{A},\mb{N})$ is maximized on $O(n)$ if and only if $\mb{Q}$ is exactly the eigenbasis (up to sign) of $\mb{A}$  aligned in a descending order according to the eigenvalues of $\mb A$. This is exactly the result one desires from conducting PCA. Therefore, we selected $-\mathcal{E}(\mb{Q};\mb{A},\mb{N})$ as the free energy (minimizing $-\mathcal{E}(\mb{Q};\mb{A},\mb{N})$ is equivalent to maximizing $\mathcal{E}(\mb{Q};\mb{A},\mb{N})$).

Observing that  $-\mathcal{E}(\mb{Q};\mb{A},\mb{N})$ is indeed a Lyapunov function of \eqref{eq:mtrxODE_0} (see Lemma \ref{lmm:lyapnov}), we constructed the Reimannian metric $\tilde{g}$ by the method in \cite{barta2012every}. For two tangent vector fields $\mb X$ and $\mb Y$ on the Stiefel manifold, $\tilde{g}$ is defined as 
\begin{equation*} 
\begin{aligned} 
\langle\mb{X}, \mb{Y}\rangle_{\tilde{g}}&:=\langle\mb{X_0}, \mb{Y_0}\rangle_{g_e}-\dfrac{1}{\langle\mathcal{E}',\ \mb{F}\rangle_F}\langle \mathcal{E}',\ \mb{X_1}\rangle_F\langle \mathcal{E}',\ \mb{Y_1}\rangle_F.
\end{aligned}
\end{equation*}
Here $g_e$ is the Eucledian metric, $\mb{F}$ is defined in \eqref{eq:finvODE}, $\mathcal{E}'$ is the derivative of the $\mathcal{E}$ w.r.t. $\mb Q$, $\mb{X}=\mb{X_0}+\mb{X_1}$ and $\mb{Y}=\mb{Y_0}+\mb{Y_1}$ are certain decompositions of $\mb X$ and $\mb Y$ respectively, see \eqref{eq:decomp}. For details of $\tilde{g}$, see \eqref{def:gtilde} and Proposition \ref{thm:R}.

Finally, we can reformulate \eqref{eq:mtrxODE_0} as an LLG equation on the Stiefel manifold: in \eqref{eq:LLGonM}, let $\widehat{\dot{\mb{m}}}_1=\mb 0, \mathcal{E}=-\mathcal{E}(\mb{Q};\mb{A},\mb{N}), g=\tilde{g}$. We emphasize that the Hamiltonian part $\widehat{\dot{\mb{m}}}_1$ is degenerate in \eqref{eq:mtrxODE_0}. Thus, we will explore usage of this Hamiltonian part in future.  

After the above reformulation, we are ready to study the convergence property of \eqref{eq:mtrxODE_0}. First, in Theorem \ref{thm:convergence}, we proved the global convergence of \eqref{eq:mtrxODE_0}: for arbitrary initial datum in $O(n)$, \eqref{eq:mtrxODE_0} converges to an equilibrium of the flow. 

Moreover, we comprehensively characterized the stable manifolds of \eqref{eq:mtrxODE_0}. To our best knowledge, this is the first complete result on stable manifolds for a PCA flow. To derive that, we developed a new technique based on rank analysis on the initial datum $\mb{Q}_0$. For invertible $\mb{Q}\in \R^{n\times n}$, we recursively defined $\sigma_i(\mb Q),\ i=1,2,...,n$ which is a permutation of $\{1,2,...,n\}$:
\begin{gather*} 
\sigma_1(\mb Q):=\inf\limits_{1\leq k\leq n}\{k:\ q_{1,k}\neq 0\},\\
\sigma_m(\mb Q):=\inf\limits_{1\leq k\leq n}\{k:\ \mathrm{det}(\mb{Q}[1,2,3,...,m;\  \sigma(1),\ \sigma(2),\ ...,\ k])\neq 0\}.
\end{gather*}
See \eqref{def:sigma}. The geometric meaning of $\sigma_m(\mb Q)$ is to determine which component of $\mb Q$ converges to the $m$-th eigenvector of $\mb A$. 

By Theorem \ref{thm:stablemanifold}, the convergent point of $\mb{Q}(t)$ in \eqref{eq:mtrxODE_0} with initial value $\mb{Q}(0)=\mb{Q_0}$ is given by 
\begin{align*}
	\mb{q_{\sigma_{m}(\mb{Q_0})}}(t) \to \mathrm{sgn}(z_m)\mb{e_m},\ m=1,2,...,n,
\end{align*}
here $z_m$ is a number defined in Lemma \ref{lmm:z_mrepresentation}. Geometrically, $z_m$ determines the orientation of the orthogonal frame. One can see that the convergence point of $\mb{Q}(t)$ as $t\to\infty$ only depends on $\sigma_m(\mb{Q_0}),m=1,2,...,n$, so by Corollary \ref{cor:stablerank}, we have $\sigma_m(\mb{Q}(t))=\sigma_m(\mb{Q_0}),m=1,2,...,n$ for all $t>0$, thus rank analysis is only necessary for the initial datum. 

As a corollary, we also determined the convergence rate to stable equilibria. If $\mb Q(t)$ converges to the $[\mb{e_1},\mb {e_2},...,\mb{e_n}]$ (up to sign), then by Theorem \ref{thm:expconv}, 
\begin{equation*}
	\|\mb{q_i}-\mb{e_i}\| \leq Ce^{-\nu_{i}t},\ i=1,2,...,n,
\end{equation*}
where 	
\begin{equation*}
\begin{aligned} 
\nu_k &:= \min\{\lambda_1-\lambda_2,\ \lambda_2-\lambda_3,\ ...,\ \lambda_{k}-\lambda_{k+1}\}>0,\ k=1,2,...,n-1,\\
\nu_n &:=\nu_{n-1}
\end{aligned}
\end{equation*}
and $\lambda_1>\lambda_2>...>\lambda_n$ are eigenvalues of $\mb A$. See \eqref{eq:rate} for details. 

Theorem \ref{thm:stablemanifold} and \ref{thm:expconv} reveals the mechanism of the SGA method. If $\mb Q$ converges to a stable equilibrium, because $v_k\geq v_{k+1}$, convergence of $\mb {q_k}$ is faster than that of $\mb{q_{k+1}}$ for all $k=1,2,...,n-1$. Thus, alignment of $\mb{q_1}$ to $\mb{e_1}$ is the first to complete, and due to orthogonality, $\mb{q_2},...,\mb{q_n}$ will be forced into the orthogonal complement of $\mathrm{span}\{\mb{q_1}\}$, which is very close to the subspace of $\mathrm{span}\{\mb{e_2},...,\mb{e_n}\}$. Then, alignment of $\mb{q_2}$ to $\mb{e_2}$ will be completed and  $\mb{q_3},...,\mb{q_n}$ will be forced into the orthogonal complement of $\mathrm{span}\{\mb{q_1},\mb{q_2}\}$, which is close to the subspace of $\mathrm{span}\{\mb{e_3},...,\mb{e_n}\}$. Meanwhile, in this process, alignment of $\mb{q_1}$ will not be influenced due to semi-decoupling. Alignment of other components is then conducted in order. Finally, alignment of $\mb{q_n}$ is completed and the solution converges. This is the real mechanism of the SGA method in the case where $\mb A$ has all single eigenvalues. 

Finally, results for the case $p<n$ is discussed in Section \ref{sec:p<n}. By the semi-decoupling property, all results in the case of $p=n$ can be extended to this case.

The rest of paper will be organized in the following pattern: in Section \ref{sec:notations}, we clarify assumptions and notations. In Section \ref{sec:LLG}, we introduced the Landau-Lifshitz-Gilbert equation and generalize it into Riemannian manifolds. In Section \ref{sec:PCA} and \ref{sec:ode}, we first regard \eqref{eq:mtrxODE_0} as a Landau-Lifshitz-Gilbert equation on the Stiefel manifold. Then we derive global convergence of \eqref{eq:mtrxODE_0} by using its dissipative property. Moreover, we develop other convergence properties, including the stable manifolds and the convergence rate by exploiting the semi-decoupling property. This critical property is reinterpreted as a solution formula of \eqref{eq:mtrxODE_0} which involves the Cholesky decomposition. Omitted proofs of lemmas and calculation can be found in Section \ref{sec:appendix}.

\section{Premier} \label{sec:notations}
\subsection{Reformulation of \eqref{eq:mtrxODE_0}}
Before we introduce our main results in detail, we conduct preliminary analysis of \eqref{eq:mtrxODE_0} and recast it into a more compact form. 

Notice the following identity: for all $\mb B\in\R^{n\times n}$,
\begin{align*}
	\sum_{1\leq k\leq j\leq n}\mb{E_kBE_j}+\sum_{1\leq j<k\leq n}\mb{E_kBE_j}=\sum_{1\leq j,k\leq n}\mb{E_kBE_j}=\mb{B},
\end{align*}
we have
\begin{align*}
&\ \ \ \ \sum_{j=1}^n\mb{E_j}\mb{Q}^T\mb{AQ}\mb{E_j} +2\sum_{j=1}^n\sum_{k=1}^{j-1} \mb{E_k}\mb{Q}^T\mb{AQE_j} \\
&= \sum_{j=1}^n\sum_{k=1}^{j} \mb{E_k}\mb{Q}^T\mb{AQE_j}+\sum_{j=1}^n\sum_{k=1}^{j-1} \mb{E_k}\mb{Q}^T\mb{AQE_j}\\
&= \mb{Q}^T\mb{AQ}+\sum_{j=1}^n\sum_{k=1}^{j-1} \mb{E_k}\mb{Q}^T\mb{AQE_j}-\mb{E_j}\mb{Q}^T\mb{AQE_k},
\end{align*}
thus \eqref{eq:mtrxODE_0} can be reformulated as
\begin{gather}\label{eq:mtrxODE}
\left\{
\begin{split}
\mb{\dot{Q}} &= \mb{AQ} - \mb{Q}\mb{Q}^T\mb{AQ} + \mb{Q}\sum_{j=1}^n\sum_{k=1}^{j-1}\mb{E_j}\mb{Q}^T\mb{AQE_k} - \mb{E_k}\mb{Q}^T\mb{AQE_j},\\
\mb{Q}(0) &= \mb{Q_0}:=[\mb{q_{1,0}}, \mb{q_{2,0}},..., \mb{q_{n,0}}]\in O(n).
\end{split}\right.
\end{gather}

Similarly, we can rewrite \eqref{eq:SGA} by matrices. For $\mb{\Lambda},\mb{Q}\in\R^{n\times n}$, define
\begin{equation}\label{def:G}
\begin{aligned}
\mb{\Sigma}(\mb{\Lambda},\mb{Q})&:=\sum_{j=1}^n\sum_{k=1}^{j-1}\mb{E_j}\mb{Q}^T\mb{\Lambda QE_k} - \mb{E_k}\mb{Q}^T\mb{\Lambda QE_j},\\ 
\mb{G}(\mb{\Lambda},\mb{Q})&:=\mb{\Lambda Q}-\mb{QQ}^T\mb{\Lambda}\mb{Q}+\mb{Q}\mb{\Sigma}(\mb{\Lambda},\mb{Q}).
\end{aligned}
\end{equation}
Then \eqref{eq:SGA} also reads as
\begin{gather} \label{eq:rewriteleading}
\left\{
\begin{split}
\mb{A}(k)&=\mb{x}(k)\mb{x}^T(k),\\
\mb{W}(k)&=\mb{W}(k-1)+\eta_k\mb{G}(\mb{A}(k),\mb{W}(k-1)). 
\end{split}\right.
\end{gather}

Remember that by Lemma \ref{lmm:orthogonality}, solutions to \eqref{eq:mtrxODE_0} satisfy $\mb{Q}(t)\mb{Q}^T(t)=\mb{I_n}, t>0$ if $\mb{Q}(0)\in O(n)$. We prove this lemma here. 
\begin{proof}[Proof of Lemma \ref{lmm:orthogonality}]
	Let $\mb{F}(\mb{Q})=\mb{Q}\displaystyle\sum_{j=1}^n\sum_{k=1}^{j-1}(\mb{E_j}\mb{Q}^T\mb{AQE_k} - \mb{E_k}\mb{Q}^T\mb{AQE_j})$. So $\mb{F}(\mb{Q})=\mb{Q}\mb{\Sigma({A,Q})}$. Because $\mb A$ is symmetric, so $\mb{\Sigma({A,Q})}$ is a skew-symmetric matrix. Direct computation implies that
	\begin{align}
	\dfrac{\mathrm{d}}{\mathrm{d}t}(\mb{Q}(t)\mb{Q}^T(t)) &= \mb{A}\mb{Q}\mb{Q}^T + \mb{Q}\mb{Q}^T\mb{A} - 2\mb{Q}\mb{Q}^T\mb{AQ}\mb{Q}^T + \mb{Q}\mb{\Sigma({A,Q})}\mb{Q}^T + \mb{Q}\mb{\Sigma({A,Q})}^T\mb{Q}^T.
	\end{align}
	Because $\mb{\Sigma({A,Q})}$ is skew-symmetric, thus $\mb{P}(t):=\mb{Q}(t)\mb{Q}^T(t)$ satisfies an algebraic Ricatti eqaution \cite{bittanti2012riccati}:
	\begin{align}\label{eq:RED}
	\mb{\dot{P}}(t) = \mb{AP} + \mb{PA} - 2 \mb{PAP},\ \mb{P}(0)=\mb{I_n}.
	\end{align}
	By uniqueness of solutions, we know that $\mb{P}(t)$ stays invariant, i.e. $\mb{P}(t)=\mb{I_n}$ for any $t\geq 0$. So $\mb{Q}(t)\in O(n)$.
\end{proof}
\begin{remark}
	 Notice that even though $\mb{Q}(0)$ does not stay on the Stiefel manifold, $\mb{P}(t)=\mb{Q}(t)\mb{Q}^T(t)$ also satisfies \eqref{eq:RED} with $\mb{P}(0)\neq\mb{I_n}$. Based on this observation, it is proved in \cite{yan1994global} that if rank$(\mb{Q}(0))=n$, then 
	\begin{align}\label{eq:back}
	\lim\limits_{t\to \infty}\|\mb{Q}^T(t)\mb{Q}(t)-\mb{I_n}\|_F=0.
	\end{align}
	This convergence is also exponential convergence. Main steps of this \eqref{eq:back} will be provided in Section \ref{sec:appendix}.
\end{remark}
So we can simplify \eqref{eq:mtrxODE} by cancelling the first two terms on the R.H.S.:
\begin{gather}\label{eq:smpODE}
\left\{
\begin{split}
\mb{\dot{Q}} &=  \mb{Q}\sum_{j=1}^n\sum_{k=1}^{j-1}(\mb{E_j}\mb{Q}^T\mb{AQE_k} - \mb{E_k}\mb{Q}^T\mb{AQE_j}),\\
\mb{Q}(0) &= \mb{Q_0}\in O(n).
\end{split}\right.
\end{gather}
Preserving the notation in the proof of Lemma \ref{lmm:orthogonality} and notations in \eqref{def:G}, we define
\begin{align}\label{def:F}
 \mb{F(Q)}:=\mb{Q}\mb{\Sigma}(\mb{A},\mb{Q}). 
\end{align}
Thus one can rewrite \eqref{eq:smpODE} as
\begin{gather}\label{eq:finvODE}
\left\{
\begin{split}
\mb{\dot{Q}} &= \mb{F(Q)},\\
\mb{Q}(0) &= \mb{Q_0}\in O(n).
\end{split}\right.
\end{gather}
From now on, we will use \eqref{eq:finvODE} instead of \eqref{eq:mtrxODE_0} in all proofs.
\subsection{Notations and assumptions}
In this paper, we assume that $\nu$ has compact support, i.e., there exists a constant $M>0$ such that
\begin{align} \label{ass:L_inf}
\P(\|\mb{x}\|_2\leq M)=1.
\end{align}

In the following sections, we will adopt both the matrix representation and the component-wise representation of \eqref{eq:finvODE}, thus we clarify the notation here. $\mb{q_i},\ i=1,2,...,n$ represent the column vectors of $\mb{Q}$ in order, i.e.
\begin{align}
	\mb{Q} = [\mb{q_1},\ \mb{q_2},\ ...,\  \mb{q_n}],
\end{align}  
while $\tilde{\mb{q_i}},\ i=1,2,...,n$ represent the row vectors of $\mb{Q}$ in order, i.e.
\begin{align}
	\mb{Q}^T = [\tilde{\mb{q_1}}^T,\ \tilde{\mb{q_2}}^T,\ ...,\ \tilde{\mb{q_n}}^T].
\end{align}
For each entry, $q_{i,j},\ i,j=1,2,...,n$ represent the entries at $i$th row, $j$th column of the matrix $\mb{Q}$, i.e.
\begin{align}
	\mb{q_j} = (q_{1,j},\ q_{2,j},\ ...,\  q_{n,j})^T.
\end{align} 
The canonical orthonormal basis in $\R^n$ is denoted as $\mb{e_j}, j=1,2,...,n$, which are written in column vectors, i.e.
\begin{align}
	\mb{I_n} = [\mb{e_1},\ \mb{e_2},\ ...,\ \mb{e_n}].
\end{align}
Here $\mb{I_n}$ is the identity matrix of size $n$.

For $\mb{M},\mb{N}\in\R^{n\times n}$, $\|\mb{M}\|_{F}$ represents the Frobenius norm of $\mb{M}$ and $\langle \mb{M},\ \mb{N}\rangle_{F}$ represents the inner product in the Frobenius sense:
\begin{align}
\|\mb{M}\|=\sqrt{\mathrm{tr}(\mb{M}\mb{M}^T)},\ \langle \mb{M},\ \mb{N}\rangle_{F}=\mathrm{tr}(\mb{M}^T\mb{N}).
\end{align}
For $\mb{x}=(x_1,x_2,...,x_n)\in\R^n$,  $\|x\|_2$ represents the $\ell_2$ norm of $\mb{x}$, i.e.
\begin{align}
	\|\mb{x}\|_{2}=\sqrt{\sum_{j=1}^n|x_j|^2}
\end{align}

For notation of submatrices, given $\mb{M}\in \R^{n\times n}$, row indices $\{a_1,a_2,...,a_r\}$
and column indices $\{b_1,b_2,...,b_c\}$, the submatrix that is formed from rows $\{a_1,a_2,...,a_r\}$ and columns $\{b_1,b_2,...,b_c\}$ is denoted as
\begin{align}
	\mb{M}[a_1,a_2,...,a_r;\ b_1,b_2,...,b_c].
\end{align}
The rank of a matrix $\mb{M}$ is denoted as $\mathrm{rank}(\mb{M})$.

Assume that the eigenvalues of $\mb{A}$ are all single, i.e. of multiplicity one. Denote them as
\begin{align}\label{eq:Aeigenvalue} \lambda_1>\lambda_2>...>\lambda_n>0
\end{align}
in descending order. Without loss of generality, we assume that $\mb A$ is diagonal:
\begin{gather*}
	\mb A=\mathrm{diag}\{\lambda_1,\lambda_2,...,\lambda_n\}.
\end{gather*}

By default, omitted proofs of Lemmas and other important but complicated computations are available in Section \ref{sec:appendix}.

\section{The Landau-Lifshitz-Gilbert equation} \label{sec:LLG}

\subsection{The Landau-Lifshitz-Gilbert equation in $\R^3$}
As a motivation, we first recall the Landau-Lifshitz-Gilbert equation \cite{landau1998theory,gilbert1955lagrangian,gilbert2004phenomenological} in this section. The magnetization $\mb{m}\in\R^3$ in a ferromagnet is varying at each point while preserving the magnitude, which equals to the saturation magnetization $m_s$. Landau and Lifshitz \cite{landau1998theory} proposed the following differential equation of $\mb{m}$, which describes the rotation of magnetization in reponse to external torque:
\begin{align} \label{eq:LLG}
\dfrac{\dd \mb{m}}{\dd t} = -\gamma\mb{m}\times\mb{h} - \lambda\mb{m}\times(\mb{m}\times\mb{h}).
\end{align}
Here '$\times$' represents the cross product of two vectors in $\R^3$, $\mb{h}\in\R^3$ is the effective magnetic field applied to the magnetic moment, $\gamma$ is the electron gyromagnetic ratio and $\lambda>0$ is a damping parameter which is related to $\gamma$ and $m_s$. In 1955, Gilbert \cite{gilbert1955lagrangian,gilbert2004phenomenological} modified \eqref{eq:LLG} by introducing parameters characterizing the material property. The modified version was of the same form as \eqref{eq:LLG}, but with $\gamma$ and $\lambda$ of different physical meanings. Therefore, we refer to \eqref{eq:LLG} as the Landau-Lifshitz-Gilbert (LLG) equation.

The effective magnetic field $\mb{h}$ is the negative derivative of a magnetic energy density function w.r.t the magnetization $\mb{m}$, i.e.
\begin{align}
\mb{h}=-\dfrac{\p F(\mb{m})}{\p \mb{m}}.
\end{align}
Versatile choices of the magnetic energy were considered under different physical contexts. For instance, one can choose the exchange energy $F(\mb{m})=\dfrac{1}{2}\int_{\Omega}|\nabla\mb{m}|^2\dd\mb{x}$ to arrange the molecular magnetic field in order. 

Now suppose that the energy density function $F(\mb{m})$ is given. A direct calculation yields
\begin{equation}
\begin{aligned}
\dfrac{\dd F(\mb{m}(t))}{\dd t} &= -\dfrac{\dd \mb{m}}{\dd t}\cdot \mb{h}\\
&= \gamma(\mb{m}\times\mb{h})\cdot\mb{h}+\lambda\mb{h}\cdot(\mb{m}\times(\mb{m}\times\mb{h}))\\
&= -\lambda|\mb{m}\times\mb{h}|^2\leq 0.
\end{aligned}
\end{equation}
Thus $F(\mb{m})$ is dissipated. As the first term is perpendicular to the gradient direction, i.e. $\mb{m}\times\mb{h}\perp\mb{h}$, the dissipation of the free energy is totally contributed by the second term, i.e. $-\lambda\mb{m}\times(\mb{m}\times\mb{h})$. Therefore, we refer to this term as the dissipative term. 

Another observation is that 
\begin{align}
	\dfrac{\dd\|\mb m\|^2}{\dd t}= 2\mb m\cdot (-\gamma\mb{m}\times\mb{h} - \lambda\mb{m}\times(\mb{m}\times\mb{h}))=0.
\end{align} 
Therefore, $\|\mb m\|$ is also preserved. So if $\mb m(0)\in \mathbb{S}^2$, then $\mb m(t)\in \mathbb{S}^2$ for all $t>0$.

If $\lambda=0$ in \eqref{eq:LLG}, then $F(\mb{m}(t))$ is conserved. If $F(\mb{m})$ is selected as the Kinetic energy and $\mb{m}$ represents the body angular velocity of a rigid body under free rotation, then the LLG equation reduces to the Euler equation of a rigid body. Therefore, the first term should be regarded as the Hamiltonian part of the LLG equation, which conserves the free energy $F(\mb{m})$.  

\subsection{Generalization of the LLG equation: on Riemmanian manifolds}

In the previous subsection, we observed that the critical structure of the LLG equation is that: first, it possesses both a Hamiltonian part and a dissipative part, and they are perpendicular to each other; second, the Hamiltonian for the Hamiltonian part is exactly the free energy minimized by the dissipative part, i.e. $F(\mb{m})$. Moreover, $\|\mb m\|$ is preserved, so solutions of \eqref{eq:LLG} is invariant on $\mathbb{S}^2$. 

Therefore, we desire the LLG equation on a Riemannian manifold $(\mathcal{M},g)$ should preserve the above properties:
\begin{itemize}
	\item It possesses the Hamiltonian-dissipation structure: the Hamiltonian term $H$ is perpendicular to the dissipative term $D$ on $(\mathcal{M},g)$; $D$ is the variation of a free energy $\mathcal{E}$; $H$ preserves $\mathcal{E}$.  
	\item The solution $\mb m(t)\in \mathcal{M}$. For instance, the solution of \eqref{eq:LLG} stays on $\mathbb{S}^2$ and the solution of \eqref{eq:finvODE} satys on $O(n)$.
\end{itemize}

Suppose that $(\mathcal{M},g)$ is a Riemannian manifold (see Section \ref{sec:appendix} for definition). Let $\mathcal{E}(\mb{m}):\ \mathcal{M}\to\R$ be a smooth function on $\mathcal{M}$ and $\nabla_g\mathcal{E}$ be the gradient of $\mathcal{E}$ w.r.t. the Riemannian metric $g$ (see Section \ref{sec:appendix} for definition). Then by definition of the gradient, for any tangent vector field $\dot{\mb{x}}$, we have
\begin{align}
\langle \mathcal{E}'(\mb{m}),\ \dot{\mb{x}}\rangle=\langle \nabla_g\mathcal{E},\ \dot{\mb{x}}\rangle_g.
\end{align} 
Here $\mathcal{E}'(\mb{m})$ is the derivative of $\mathcal{E}$, which is a cotangent vector field. The bracket $\langle\cdot,\cdot\rangle$ is understood as the action of a cotangent vector field (in this case, $\mathcal{E}'$) on a tangent vector field. 

The gradient flow of the free energy $\mathcal{E}$ on $(\mathcal{M},g)$ is the following differential equation:
\begin{align} \label{eq:gradientflow}
\dfrac{\dd \mb{m}(t)}{\dd t} = -\nabla_g\mathcal{E}(\mb{m}).
\end{align}
The term $-\nabla_g\mathcal{E}(\mb{m})$ will serve as the dissipative term in the LLG equation on $\mathcal{M}$. 

The Hamiltonian part is defined as following: for any time dependent tangent vector field $\dot{\mb{m}}_1(\mb{m},t):\ \mathcal{M}\times [0,\infty)\to T\mathcal{M}$, define
\begin{align} \label{def:Hamiltonian}
\widehat{\dot{\mb{m}}}_1 := \|\nabla_g\mathcal{E}(\mb{m})\|_g^2\cdot\dot{\mb{m}}_1-\langle \nabla_g\mathcal{E}(\mb{m}),\ \dot{\mb{m}}_1\rangle_g\cdot\nabla_g\mathcal{E}(\mb{m}).
\end{align} 
Then at each fixed time $t$, $\widehat{\dot{\mb{m}}}_1(\cdot,t)$ is still a tangent vector field since it is the sum of two tangent vector fields. Moreover, direct calculation yields
\begin{equation} \label{eq:0}
\begin{aligned}
\langle \nabla_g\mathcal{E}(\mb{m}),\ \widehat{\dot{\mb{m}}}_1\rangle_g &= \|\nabla_g\mathcal{E}(\mb{m})\|_g^2\langle \nabla_g\mathcal{E}(\mb{m}),\ {\dot{\mb{m}}}_1\rangle_g-\|\nabla_g\mathcal{E}(\mb{m})\|_g^2\langle \nabla_g\mathcal{E}(\mb{m}),\ {\dot{\mb{m}}}_1\rangle_g\\
&=0.
\end{aligned}
\end{equation}
Thus $\widehat{\dot{\mb{m}}}_1$ is perpendicular to the gradient direction and preserves the free energy. 

Upon \eqref{eq:gradientflow} and \eqref{def:Hamiltonian}, we are ready to develop the LLG equation on $(\mathcal{M},g)$. For any time dependent tangent vector field $\dot{\mb{m}}_1$, the following differential equation serves as an analog of \eqref{eq:LLG}, i.e. the LLG equation on the Riemannian manifold $(\mathcal{M},g)$:
\begin{align} \label{eq:LLGonM}
\dfrac{\dd \mb{m}(t)}{\dd t} = \widehat{\dot{\mb{m}}}_1(\mb{m}(t),t) -\nabla_g\mathcal{E}(\mb{m}(t)),
\end{align}
where $\widehat{\dot{\mb{m}}}_1$ is defined in \eqref{def:Hamiltonian}. Then we have the following proposition for \eqref{eq:LLGonM}:

\begin{proposition} \label{prop:LLG}
	(the LLG equation on a Riemannian manifold) Suppose that $(\mathcal{M},g)$ is a Riemannian manifold. Let $\mathcal{E}(\mb{m}):\ \mathcal{M}\to\R$ be a smooth function on $\mathcal{M}$ and $\dot{\mb{m}}_1$ be a time dependent tangent vector field $\dot{\mb{m}}_1(\mb{m},t):\ \mathcal{M}\times [0,\infty)\to T\mathcal{M}$. Consider equation \eqref{eq:LLGonM} with initial value $\mb{m}(0)=\mb{m}_0\in\mathcal{M}$, i.e.
	\begin{align*}
	\dfrac{\dd \mb{m}(t)}{\dd t} = \widehat{\dot{\mb{m}}}_1 -\nabla_g\mathcal{E}(\mb{m}),\ \mb{m}(0)=\mb{m}_0,
	\end{align*}
	where $\hat{\dot{\mb{m}}}_1$ is defined in \eqref{def:Hamiltonian}. Then: 
	\begin{enumerate}[(i)]
		\item ($\mathcal{M}$ invariance) for any $t\geq 0$, $\mb{m}(t)\in\mathcal{M}$;
		\item (energy dissipation) for any $t\geq 0$,
		\begin{align} \label{eq:energydissipation}
		\dfrac{\dd\mathcal{E}(\mb{m}(t))}{\dd t} = -\|\nabla_g\mathcal{E}(\mb{m}(t))\|_g^2\leq 0.
		\end{align}
	\end{enumerate}
\end{proposition}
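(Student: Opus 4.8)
The plan is to read off both claims directly from the way the right-hand side of \eqref{eq:LLGonM} was built. The starting point is that, at each fixed $t$, the map $\mb{m}\mapsto \widehat{\dot{\mb{m}}}_1(\mb{m},t)-\nabla_g\mathcal{E}(\mb{m})$ is an honest tangent vector field on $\mathcal{M}$: the term $-\nabla_g\mathcal{E}$ is tangent by the very definition of the Riemannian gradient, and $\widehat{\dot{\mb{m}}}_1$ was observed right after \eqref{def:Hamiltonian} to be a sum of two tangent vector fields. Hence \eqref{eq:LLGonM} is an ODE on $\mathcal{M}$ driven by a time-dependent vector field, and standard ODE theory — using smoothness of $\mathcal{E}$ and of $\dot{\mb{m}}_1$, which makes the driving field locally Lipschitz in $\mb{m}$ and continuous in $t$ — produces a unique maximal integral curve through $\mb{m}_0\in\mathcal{M}$.

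For part (i), I would use the elementary fact that an integral curve of a vector field tangent to $\mathcal{M}$ cannot leave $\mathcal{M}$. Concretely, one may work in a local chart around $\mb{m}(t)$, where the equation becomes a genuine coordinate ODE whose solution stays in the chart domain and hence maps back into $\mathcal{M}$; equivalently, embedding $\mathcal{M}\hookrightarrow\R^N$ and noting that $\dot{\mb{m}}(t)\in T_{\mb{m}(t)}\mathcal{M}$ at every point of the curve forces $\mb{m}(t)\in\mathcal{M}$ throughout the interval of existence. Either way $\mb{m}(t)\in\mathcal{M}$. When $\mathcal{M}$ is compact — as for the Stiefel manifold $O(n)$ relevant to \eqref{eq:finvODE} — the curve cannot escape to infinity, so it is in fact defined for all $t\ge 0$.

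For part (ii), I would differentiate $\mathcal{E}$ along the curve. The defining identity of the Riemannian gradient gives
\[
\frac{\dd\mathcal{E}(\mb{m}(t))}{\dd t} = \langle \mathcal{E}'(\mb{m}(t)),\ \dot{\mb{m}}(t)\rangle = \langle \nabla_g\mathcal{E}(\mb{m}(t)),\ \dot{\mb{m}}(t)\rangle_g .
\]
Substituting $\dot{\mb{m}}(t)=\widehat{\dot{\mb{m}}}_1-\nabla_g\mathcal{E}(\mb{m}(t))$ and invoking the orthogonality relation \eqref{eq:0}, namely $\langle \nabla_g\mathcal{E},\ \widehat{\dot{\mb{m}}}_1\rangle_g=0$, all that survives is $-\|\nabla_g\mathcal{E}(\mb{m}(t))\|_g^2$, which is exactly \eqref{eq:energydissipation}. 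Since every step is a one-line manipulation, there is essentially no obstacle in the argument itself; the only point that deserves care is in (i), where one must make the passage ``velocity tangent to $\mathcal{M}$ $\Rightarrow$ curve stays on $\mathcal{M}$'' rigorous through charts or an embedding, and then combine the dissipation from (ii) with compactness of $\mathcal{M}$ in the application to upgrade local existence to global existence on $[0,\infty)$.
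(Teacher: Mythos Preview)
Your proof is correct and follows essentially the same approach as the paper: part (i) is immediate from the fact that both $\nabla_g\mathcal{E}$ and $\widehat{\dot{\mb{m}}}_1$ are tangent vector fields, and part (ii) is exactly the chain-rule computation using the defining property of $\nabla_g$ together with the orthogonality \eqref{eq:0}. You supply a bit more detail on (i) (charts/embedding and global existence via compactness) than the paper, which simply declares it ``straightforward,'' but the substance is identical.
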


\begin{proof}
	Because both $\nabla_g\mathcal{E}$ and $\dot{\mb{m}_1}$ are tangent vector fields, so $(i)$ is straightforward. For $(ii)$, from definition \eqref{def:gradient}  (the definition of gradients on Riemannian manifolds), direct calculation implies 
	\begin{equation*}
	\begin{aligned}
	\dfrac{\dd \mathcal{E}(\mb{m}(t))}{\dd t} &= \langle \mathcal{E}'(\mb{m}(t)),\ \widehat{\dot{\mb{m}}}_1 -\nabla_g\mathcal{E}(\mb{m}(t))\rangle\\
	&= \langle \nabla_g\mathcal{E}(\mb{m}(t)), \widehat{\dot{\mb{m}}}_1\rangle_g - \langle \nabla_g\mathcal{E}(\mb{m}(t)), \nabla_g\mathcal{E}(\mb{m}(t))\rangle_g.
	\end{aligned}
	\end{equation*}
	By \eqref{eq:0}, we konw that $\langle \nabla_g\mathcal{E}(\mb{m}(t)), \widehat{\dot{\mb{m}}}_1\rangle_g=0$. Therefore, 
	\begin{align*}
	\dfrac{\dd \mathcal{E}(\mb{m}(t))}{\dd t} = -\langle \nabla_g\mathcal{E}(\mb{m}(t)), \nabla_g\mathcal{E}(\mb{m}(t))\rangle_g =  -\|\nabla_g\mathcal{E}(\mb{m}(t))\|_g^2\leq 0.
	\end{align*}
	This proves \eqref{eq:energydissipation}.
\end{proof}

In summary, we generalized the LLG equation to any Riemmanian manifold. Based on it, we are going to embed the problem of online PCA into the macroscopic framework of the LLG equation on Riemannian manifolds.


\section{Online PCA: an LLG equation on the Stiefel manifold} \label{sec:PCA}

In this section, we formulate the corresponding ODE of online PCA, i.e. equation \eqref{eq:finvODE} as an LLG equation on the Stiefel manifold. Lemma \ref{lmm:orthogonality} ensures that solutions of \eqref{eq:finvODE} will stay on the Stiefel manifold if the initial value $\mb{Q}_0\in O(n)$, so we only need to select a free energy $\mathcal{E}(\mb{Q}): O(n)\to\R$ and assign an appropriate Riemannian metric $g$ to the Stiefel manifold. 

\subsection{The weighted Rayleigh quotient and the Wielandt-Hoffman inequality} 
A natural choice of the free energy is the Rayleigh quotient \cite{helmke2012optimization}:
\begin{align} \label{def:RQ}
	R_{\mb{A}}(\mb{Q}) = \mathrm{tr}(\mb{Q}^T\mb{A}\mb{Q}),
\end{align}
since maximizing the Rayleigh Quotient is equivalent to exploring the principle eigenspaces. However, if one desires to not only detect the principle eigenspaces, but also to distinguish each principle eigenvector from each other, certain modification is necessary on the Rayleigh quotient to achieve this aim. In fact, as long as $\mb{Q}\in O(n)$, the Rayleigh quotient is constant and equals to $\mathrm{tr}(\mb{A})$:
\begin{align*}
	R_{\mb{A}}(\mb{Q}) = \mathrm{tr}(\mb{Q}^T\mb{A}\mb{Q}) = \mathrm{tr}(\mb{Q}\mb{Q}^T\mb{A}) = \mathrm{tr}(\mb{A}).
\end{align*}
Therefore, the Rayleigh quotient can not be selected as the free energy directly.

To distinguish different eigenvectors, one way is to assign different weights to different components \cite{helmke2012optimization}. Let $\mb{N}\in\R^{n\times n}$ be a symmetric matrix. Consider
\begin{align} \label{def:WRQ}
	\mathcal{E}(\mb{Q};\mb{A},\mb{N}) = \mathrm{tr}(\mb{N}\mb{Q}^T\mb{A}\mb{Q}).
\end{align}
As long as $\mb{N}$ is given, this is equivalent to selecting the diagonalization of $\mb{N}$. Suppose that $\mb{P}\in O(n)$ diagonalizes $\mb{N}$, i.e.,  $\mb{N}=\mb{P}\mb{D}\mb{P}^T$ where $\mb{D}$ is diagonal. Then
\begin{equation*}
	\begin{aligned}
	\max\limits_{\mb{Q}\in O(n)}\mathrm{tr(\mb{N}\mb{Q}^T\mb{A}\mb{Q})} &= \max\limits_{\mb{Q}\in O(n)}\mathrm{tr(\mb{P}\mb{D}\mb{P}^T\mb{Q}^T\mb{A}\mb{Q})} = \max\limits_{\mb{Q}\in O(n)}\mathrm{tr(\mb{D}\mb{P}^T\mb{Q}^T\mb{A}\mb{Q}\mb{P})} \\
	&= \max\limits_{\mb{QP}\in O(n)}\mathrm{tr(\mb{D}\mb{P}^T\mb{Q}^T\mb{A}\mb{Q}\mb{P})} = \max\limits_{\mb{Q}\in O(n)}\mathrm{tr(\mb{D}\mb{Q}^T\mb{A}\mb{Q})}.
	\end{aligned}
\end{equation*} 
So without loss of generality, we assume that $\mb{N}$ is diagonal. 

To justify that maximizing \eqref{def:WRQ} does provide the eigenspace decomposition, we need the Wielandt-Hoffman inequality: 
\begin{lemma} \label{lmm:WH}
	(the Wielandt-Hoffman inequality)
	Suppose that $\mb{M},\mb{N}\in\R^{n\times n}$ are symmetric matrices. Let $\lambda_{1,i},i=1,2,...,n$ and $\lambda_{2,i},i=1,2,...,n$ be eigenvalues of $\mb{M}$ and $\mb{N}$ respectively in descending order, i.e.
	\begin{align*}
		\lambda_{1,1}&\geq\lambda_{1,2}\geq...\geq\lambda_{1,2},\\ 
		\lambda_{2,1}&\geq\lambda_{2,2}\geq...\geq\lambda_{2,n}. 
	\end{align*}
	
	Then 
	\begin{align}
		\|\mb{M}-\mb{N}\|_{F}^2\geq\sum_{i=1}^n(\lambda_{1,i}-\lambda_{2,i})^2.
	\end{align}
	The equality holds if and only if $\mb{M}$ and $\mb{N}$ can be simultaneously diagonalized with diagonal entries aligned in descending order, i.e., there exists $\mb{P}\in O(n)$ such that
	\begin{align}
		\mb{P}^T\mb{M}\mb{P}=\mathrm{diag}\{\lambda_{1,1},\lambda_{1,2},...\lambda_{1,n}\},\ \mb{P}^T\mb{N}\mb{P}=\mathrm{diag}\{\lambda_{2,1},\lambda_{2,2},...\lambda_{2,n}\}
	\end{align}
\end{lemma}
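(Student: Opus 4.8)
The plan is to prove the Wielandt-Hoffman inequality via the doubly stochastic / Birkhoff polytope argument, then extract the equality case. First I would diagonalize both matrices: write $\mb{M} = \mb{U}\mb{D_1}\mb{U}^T$ and $\mb{N} = \mb{V}\mb{D_2}\mb{V}^T$ with $\mb{U},\mb{V}\in O(n)$ and $\mb{D_1} = \mathrm{diag}\{\lambda_{1,1},\dots,\lambda_{1,n}\}$, $\mb{D_2} = \mathrm{diag}\{\lambda_{2,1},\dots,\lambda_{2,n}\}$. Setting $\mb{R} = \mb{U}^T\mb{V}\in O(n)$ and using orthogonal invariance of the Frobenius norm, $\|\mb{M}-\mb{N}\|_F^2 = \|\mb{D_1}\mb{R} - \mb{R}\mb{D_2}\|_F^2$. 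Expanding this and using $\mathrm{tr}(\mb{D_1}^2) = \sum_i \lambda_{1,i}^2$, $\mathrm{tr}(\mb{D_2}^2) = \sum_i \lambda_{2,i}^2$, and $\mathrm{tr}(\mb{D_1}\mb{R}\mb{D_2}\mb{R}^T) = \sum_{i,j} \lambda_{1,i}\lambda_{2,j} r_{ij}^2$, I would reduce the claim to the inequality $\sum_{i,j}\lambda_{1,i}\lambda_{2,j}r_{ij}^2 \leq \sum_i \lambda_{1,i}\lambda_{2,i}$, where the matrix $\mb{S}$ with entries $s_{ij} = r_{ij}^2$ is doubly stochastic (row and column sums are $1$ since $\mb{R}\in O(n)$).

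Next I would invoke the Birkhoff--von Neumann theorem: the set of doubly stochastic matrices is the convex hull of permutation matrices, so the linear functional $\mb{S}\mapsto \sum_{i,j}\lambda_{1,i}\lambda_{2,j}s_{ij}$ attains its maximum over this polytope at a permutation matrix $\mb{P_\pi}$, giving $\sum_i \lambda_{1,i}\lambda_{2,\pi(i)}$ for some permutation $\pi$. Then a standard rearrangement-inequality argument shows that among all permutations, $\pi = \mathrm{id}$ maximizes $\sum_i \lambda_{1,i}\lambda_{2,\pi(i)}$ when both sequences are sorted in descending order (this follows by the usual swap argument: if $\pi$ is not the identity, there exist indices $i<j$ with $\pi(i)>\pi(j)$, and swapping them does not decrease the sum because $(\lambda_{1,i}-\lambda_{1,j})(\lambda_{2,\pi(i)}-\lambda_{2,\pi(j)})\le 0$ while the swap changes the sum by $-(\lambda_{1,i}-\lambda_{1,j})(\lambda_{2,\pi(i)}-\lambda_{2,\pi(j)})\ge 0$). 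This establishes the inequality.

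For the equality case, I would argue in two stages. Equality in the Birkhoff step forces $\mb{S}$ to lie on a face of the Birkhoff polytope on which the functional is constant and equal to its maximum; equality in the rearrangement step, combined with the strict ordering if one wishes — but since the $\lambda$'s need not be distinct here, I would instead argue directly that equality $\sum_{i,j}\lambda_{1,i}\lambda_{2,j}r_{ij}^2 = \sum_i\lambda_{1,i}\lambda_{2,i}$ implies $r_{ij}^2 = 0$ whenever $\lambda_{1,i}\lambda_{2,j}$ is strictly below the "maximal pairing" value for that row/column, i.e. $\mb{R}$ is block-diagonal with respect to the partition of indices into groups of equal $\lambda_{1}$-values refined by equal $\lambda_2$-values. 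Working within each such block, $\mb{D_1}$ and $\mb{D_2}$ are both scalar multiples of the identity, so any orthogonal $\mb{R}$-block works, and one can always choose (or modify) the eigenvector bases $\mb{U},\mb{V}$ so that $\mb{R}$ becomes the identity; unwinding the substitution, this yields a common $\mb{P}\in O(n)$ diagonalizing both $\mb{M}$ and $\mb{N}$ with eigenvalues in descending order. Conversely, simultaneous diagonalizability in descending order makes $\mb{M}-\mb{N}$ diagonal with entries $\lambda_{1,i}-\lambda_{2,i}$, so equality is immediate.

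The main obstacle I expect is the equality-case bookkeeping when eigenvalues are repeated: one has to be careful that $\sum_i \lambda_{1,i}\lambda_{2,\pi(i)}$ can attain its maximum for many permutations $\pi$ (all those compatible with the sorted order on blocks), so "equality forces $\mb{R}$ to be a permutation matrix" is false in general; the correct statement is the block-diagonal structure described above, after which the freedom within blocks is exactly what lets one choose the common diagonalizing $\mb{P}$. Since in the application (equation \eqref{def:WRQ} with $\mb A$ having simple eigenvalues and $\mb N$ chosen with distinct diagonal entries) all eigenvalues are distinct, the clean statement "$\mb{R}$ is a signed permutation and hence the identity after sign adjustment" holds, so for the purposes of this paper the degenerate bookkeeping can be kept brief or deferred to Section \ref{sec:appendix}.
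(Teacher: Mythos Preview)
Your proposal is correct and follows essentially the same route as the paper: diagonalize both matrices, use orthogonal invariance of the Frobenius norm to reduce to a statement about $\mb{R}=\mb{U}^T\mb{V}$, observe that the matrix with entries $r_{ij}^2$ is doubly stochastic, apply Birkhoff--von Neumann to reduce to permutations, and finish with the rearrangement inequality. The only cosmetic difference is that the paper minimizes the full quadratic $\sum_{i,j}w_{ij}(\lambda_{1,i}-\lambda_{2,j})^2$ whereas you maximize the cross term $\sum_{i,j}\lambda_{1,i}\lambda_{2,j}s_{ij}$; these are equivalent. Your treatment of the equality case with repeated eigenvalues (block-diagonal structure of $\mb{R}$) is in fact more careful than the paper's, which simply asserts equality iff $\mb{W}=\mb{I_n}$.
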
 
See Section \ref{sec:appendix} for the proof of the above inequality.

By Lemma \ref{lmm:WH}, we can prove that maximizing the weighted Rayleigh quotient is exactly decomposing $\R^n$ into eigenspaces of $\mb{A}$ in order:
\begin{proposition} \label{prop:WRQ}
	Suppose that $\mb{N}=\mathrm{diag}\{\mu_1,\mu_2,...,\mu_n\}$ with $\mu_1>\mu_2>\mu_3>...>\mu_n$. Consider the weighted Rayleigh quotient defined in \eqref{def:WRQ}. Then 
	\begin{align}
		\max\limits_{\mb{Q}\in O(n)}\mathcal{E}(\mb{Q};\mb{A},\mb{N})=\sum_{i=1}^n\mu_i\lambda_i.
	\end{align}
	Here $\lambda_i$ are eigenvalues of $\mb{A}$ defined in \eqref{eq:Aeigenvalue}. The maximum is attained if and only if
	\begin{align} \label{eq:aim}
		\mb{Q}^T\mb{AQ} = \mathrm{diag}\{\lambda_1,\lambda_2,...,\lambda_n \}.
	\end{align} 
\end{proposition}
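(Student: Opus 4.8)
The plan is to reduce the maximization of $\mathcal{E}(\mb{Q};\mb{A},\mb{N}) = \mathrm{tr}(\mb{N}\mb{Q}^T\mb{A}\mb{Q})$ to a direct application of the Wielandt--Hoffman inequality (Lemma \ref{lmm:WH}), applied to the pair of symmetric matrices $\mb{A}$ and $\mb{Q}\mb{N}\mb{Q}^T$. First I would observe that since $\mb{Q}\in O(n)$, the matrix $\mb{Q}\mb{N}\mb{Q}^T$ is symmetric and has exactly the same eigenvalues as $\mb{N}$, namely $\mu_1>\mu_2>\cdots>\mu_n$. Likewise the eigenvalues of $\mb{A}$ are $\lambda_1>\lambda_2>\cdots>\lambda_n$ by \eqref{eq:Aeigenvalue}. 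Then I would rewrite the objective using the cyclic property of the trace as
\begin{align*}
\mathcal{E}(\mb{Q};\mb{A},\mb{N}) = \mathrm{tr}(\mb{N}\mb{Q}^T\mb{A}\mb{Q}) = \mathrm{tr}\big(\mb{A}\,(\mb{Q}\mb{N}\mb{Q}^T)\big) = \langle \mb{A},\ \mb{Q}\mb{N}\mb{Q}^T\rangle_F.
\end{align*}

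The key step is the polarization identity for the Frobenius inner product: for symmetric $\mb{M}$ and $\mb{N}$,
\begin{align*}
\|\mb{M}-\mb{N}\|_F^2 = \|\mb{M}\|_F^2 + \|\mb{N}\|_F^2 - 2\langle \mb{M},\ \mb{N}\rangle_F.
\end{align*}
Applying this with $\mb{M}=\mb{A}$ and $\mb{N}$ replaced by $\mb{Q}\mb{N}\mb{Q}^T$, and noting that $\|\mb{A}\|_F^2 = \sum_i \lambda_i^2$ and $\|\mb{Q}\mb{N}\mb{Q}^T\|_F^2 = \|\mb{N}\|_F^2 = \sum_i \mu_i^2$ are both independent of $\mb{Q}$, maximizing $\langle \mb{A},\ \mb{Q}\mb{N}\mb{Q}^T\rangle_F$ over $\mb{Q}\in O(n)$ is equivalent to minimizing $\|\mb{A}-\mb{Q}\mb{N}\mb{Q}^T\|_F^2$. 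By Lemma \ref{lmm:WH}, this quantity is bounded below by $\sum_{i=1}^n(\lambda_i-\mu_i)^2$, so
\begin{align*}
\max_{\mb{Q}\in O(n)}\mathcal{E}(\mb{Q};\mb{A},\mb{N}) = \tfrac12\Big(\sum_i\lambda_i^2 + \sum_i\mu_i^2 - \sum_i(\lambda_i-\mu_i)^2\Big) = \sum_{i=1}^n \mu_i\lambda_i,
\end{align*}
which gives the claimed value.

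For the characterization of the maximizers, I would invoke the equality case of Lemma \ref{lmm:WH}: since both $\mb{A}$ and $\mb{Q}\mb{N}\mb{Q}^T$ have simple spectra (the $\lambda_i$ are distinct by assumption and the $\mu_i$ are distinct by hypothesis), equality holds if and only if $\mb{A}$ and $\mb{Q}\mb{N}\mb{Q}^T$ are simultaneously diagonalized by some $\mb{P}\in O(n)$ with both sets of diagonal entries in descending order. Because $\mb{A}$ is already diagonal with distinct entries in descending order, such a $\mb{P}$ must be a signed permutation fixing the order, i.e. $\mb{P}=\mathrm{diag}\{\pm1,\dots,\pm1\}$, and hence $\mb{Q}\mb{N}\mb{Q}^T = \mathrm{diag}\{\mu_1,\dots,\mu_n\} = \mb{N}$, which forces $\mb{Q}^T\mb{A}\mb{Q}$ to be diagonal. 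One then checks that $\mb{Q}^T\mb{A}\mb{Q}$, being an orthogonal conjugate of $\mb{A}$, has the eigenvalues $\lambda_i$, and that the ordering constraint from the equality case pins these down in descending order, giving exactly \eqref{eq:aim}. Conversely, if $\mb{Q}^T\mb{A}\mb{Q}=\mathrm{diag}\{\lambda_1,\dots,\lambda_n\}$ then direct substitution gives $\mathcal{E}(\mb{Q};\mb{A},\mb{N})=\mathrm{tr}(\mb{N}\cdot\mathrm{diag}\{\lambda_1,\dots,\lambda_n\})=\sum_i\mu_i\lambda_i$, so the maximum is attained.

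The main obstacle I anticipate is the bookkeeping in the equality-case argument: one must carefully track that the descending-order requirement on the diagonal entries in Lemma \ref{lmm:WH} — applied to the pair $(\mb{A},\mb{Q}\mb{N}\mb{Q}^T)$ rather than directly to $(\mb{A},\mb{Q}^T\mb{A}\mb{Q})$ — really does translate into the ordered diagonalization \eqref{eq:aim} of $\mb{Q}^T\mb{A}\mb{Q}$, using both that the $\lambda_i$ are simple and that the $\mu_i$ are simple so that the simultaneous diagonalizer is essentially unique up to signs. The analytic content (the inequality giving the value of the maximum) is immediate once the problem is recast via polarization; everything subtle lives in the rigidity of the equality case.
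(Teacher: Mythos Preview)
Your proposal is correct and follows essentially the same approach as the paper: rewrite $\mathcal{E}$ via the polarization identity for the Frobenius norm and then apply the Wielandt--Hoffman inequality (Lemma~\ref{lmm:WH}). The only cosmetic difference is that the paper applies Lemma~\ref{lmm:WH} to the pair $(\mb{Q}^T\mb{A}\mb{Q},\mb{N})$ whereas you apply it to $(\mb{A},\mb{Q}\mb{N}\mb{Q}^T)$; these are equivalent since $\|\mb{Q}^T\mb{A}\mb{Q}-\mb{N}\|_F=\|\mb{A}-\mb{Q}\mb{N}\mb{Q}^T\|_F$, and in fact your treatment of the equality case is more detailed than the paper's.
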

\begin{proof}
	Using the fact
	\begin{align*}
		\|\mb{Q}^T\mb{A}\mb{Q}-\mb{N}\|^2_F=\|\mb{Q}^T\mb{A}\mb{Q}\|^2+\|\mb{N}\|^2-2\mathrm{tr}(\mb{N}\mb{Q}^T\mb{A}\mb{Q}),
	\end{align*}
	we derive
	\begin{equation*}
		\begin{aligned}
		\mathcal{E}(\mb{Q};\mb{A},\mb{N}) &= \mathrm{tr}(\mb{N}\mb{Q}^T\mb{AQ})\\
		&=\dfrac{\|\mb{Q}^T\mb{AQ}\|_F^2+\|\mb{N}\|_F^2-\|\mb{Q}^T\mb{AQ}-\mb{N}\|_F^2}{2}\\
		&=\dfrac{\|\mb{A}\|_F^2+\|\mb{N}\|_F^2-\|\mb{Q}^T\mb{AQ}-\mb{N}\|_F^2}{2}.
		\end{aligned}
	\end{equation*}
	By Lemma \ref{lmm:WH}, we have 
	\begin{align}
	\|\mb{Q}^T\mb{A}\mb{Q}-\mb{N}\|_F^2 \geq \sum_{i=1}^n(\lambda_i-\mu_i)^2.
	\end{align}
	Thus
	\begin{align}
		\mathcal{E}(\mb{Q};\mb{A},\mb{N})\leq\dfrac{1}{2}\left(\sum_{i=1}^n\lambda_i^2+\mu_i^2-(\lambda_i-\mu_i)^2\right)=\sum_{i=1}^n\mu_i\lambda_i.
	\end{align}
	The equality holds if $\mb{Q}$ diagonalizes $\mb{A}$ with descending order diagonal entries, i.e. \eqref{eq:aim} holds.
\end{proof}

Lemma \ref{lmm:WH} incorporates the meaning of introducing the weight $\mb{N}$: by the rearrangement inequality, it distinguishs different eigenvectors by assigning different weights to them and forces the column of $\mb{Q}$ to align in a descending order of corresponding eigenvalues. This mechanism facilitates the eigenspace decomposition.

Indeed, $-\mathcal{E}(\mb{Q};\mb{A},\mb{N})$ is a Lyapunov function for \eqref{eq:finvODE}:  
\begin{lemma} \label{lmm:lyapnov}
	Suppose that $\mb{Q}(t),\ t\geq 0$ is the solution to \eqref{eq:finvODE} for some initial value $\mb{Q_0}\in O(n)$. Let $\mb{N}=\mathrm{diag}\{\mu_1,\mu_2,...,\mu_n\}\in\R^{n\times n}$ satisfy  
	\begin{align}
	\mu_1>\mu_2>...>\mu_n.
	\end{align}
	Define \begin{align} \label{def:E}
	E:=\{\mathrm{diag}\{\eps_1,\ \eps_2,\ ...,\ \eps_n\}\mb{P}:\ \eps_i=\pm 1,\ i=1,2,...,n,\ \mb{P}\ \mathrm{is}\ \mathrm{a}\ \mathrm{permutation}\ \mathrm{matrix.}\}.
	\end{align}
	Then
	\begin{enumerate}[(i)]
		\item (Lyapunov function) Consider the weighted Rayleigh quotient $\mathcal{E}(\mb{Q};\mb{A},\mb{N})$ defined in \eqref{def:WRQ}. Then 
		\begin{align} \label{eq:Liederivative}
		\dfrac{\dd (-\mathcal{E}(\mb{Q}(t);\mb{A},\mb{N}))}{\dd t} = -2\sum_{j=1}^n\sum_{k=1}^{j-1}(\mu_k-\mu_j)(\mb{q_k}(t)\cdot \mb{Aq_j}(t))^2\leq 0;
		\end{align}
		\item (equilibrium points) the '=' in \eqref{eq:Liederivative} holds if and only if $\mb{Q}\in E$;
		\item (stable points) $\mb{F}(\mb Q)=0$ if and only if $\mb{Q}\in E$;
		\item (finite set) $|E|=2^n\cdot n!$.
	\end{enumerate}
	
\end{lemma}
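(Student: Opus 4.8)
The plan is to dispatch the four items essentially separately, with nearly all the genuine work concentrated in~(i). First I would compute the Lie derivative of $-\mathcal{E}$ along \eqref{eq:finvODE}. Writing $\mb{B}:=\mb{Q}^T\mb{A}\mb{Q}$ (symmetric, with $(k,j)$-entry $\mb{q_k}\cdot\mb{Aq_j}$) and using that $\mb{A}$ and $\mb{N}$ are symmetric together with cyclicity of the trace, differentiating $\mathcal{E}(\mb{Q};\mb{A},\mb{N})=\mathrm{tr}(\mb{N}\mb{Q}^T\mb{A}\mb{Q})$ and substituting $\dot{\mb{Q}}=\mb{Q}\mb{\Sigma}(\mb{A},\mb{Q})$ from \eqref{def:F} gives
\begin{align*}
\dfrac{\dd}{\dd t}\mathcal{E}(\mb{Q}(t);\mb{A},\mb{N})=2\,\mathrm{tr}\big(\mb{N}\mb{Q}^T\mb{A}\dot{\mb{Q}}\big)=2\,\mathrm{tr}\big(\mb{N}\mb{B}\,\mb{\Sigma}(\mb{A},\mb{Q})\big).
\end{align*}
The key algebraic observation is that $\mb{E_j}\mb{B}\mb{E_k}=B_{jk}\,\mb{e_j}\mb{e_k}^T$, so $\mb{\Sigma}(\mb{A},\mb{Q})=\sum_{1\le k<j\le n}(\mb{E_j}\mb{B}\mb{E_k}-\mb{E_k}\mb{B}\mb{E_j})$ is precisely the skew-symmetric matrix whose strictly lower-triangular part coincides with that of $\mb{B}$; its $(j,k)$ entry is $B_{jk}$ for $j>k$, $-B_{jk}$ for $j<k$, and $0$ on the diagonal. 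Feeding this description into $\mathrm{tr}(\mb{N}\mb{B}\,\mb{\Sigma})$, using $\mb{N}=\mathrm{diag}\{\mu_1,\dots,\mu_n\}$ and symmetry of $\mb{B}$, and relabeling the index pair in the upper-triangular contribution, the two triangular sums combine into $\sum_{1\le k<j\le n}(\mu_k-\mu_j)B_{kj}^2$. This is exactly \eqref{eq:Liederivative}, and since $\mu_k>\mu_j$ whenever $k<j$, every summand is nonnegative, giving dissipation.

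Next I would treat (ii) and (iii) together, since both reduce to the same linear-algebra statement. Because $\mu_k-\mu_j>0$ strictly for $k<j$, equality in \eqref{eq:Liederivative} forces $B_{kj}=\mb{q_k}\cdot\mb{Aq_j}=0$ for all $k<j$, hence (by symmetry of $\mb{B}$) for all $k\ne j$; that is, $\mb{Q}^T\mb{A}\mb{Q}$ is diagonal. Likewise, since $\mb{Q}\in O(n)$ is invertible, $\mb{F}(\mb{Q})=\mb{Q}\mb{\Sigma}(\mb{A},\mb{Q})=0$ iff $\mb{\Sigma}(\mb{A},\mb{Q})=0$, and from the entrywise formula above this again holds iff $B_{kj}=0$ for $k\ne j$, i.e. iff $\mb{Q}^T\mb{A}\mb{Q}$ is diagonal. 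So I would then prove the bridge claim: for $\mb{Q}\in O(n)$, $\mb{Q}^T\mb{A}\mb{Q}$ diagonal $\iff$ $\mb{Q}\in E$. For "$\Rightarrow$", $\mb{Q}^T\mb{A}\mb{Q}=\mb{D}$ diagonal means $\mb{Aq_j}=D_{jj}\mb{q_j}$, so each column of $\mb{Q}$ is a unit eigenvector of $\mb{A}=\mathrm{diag}\{\lambda_1,\dots,\lambda_n\}$; since the $\lambda_i$ are simple, the only unit eigenvectors are $\pm\mb{e_i}$, and orthonormality of the columns forces each index $i$ to be used by exactly one column, so $\mb{Q}=\mathrm{diag}\{\eps_1,\dots,\eps_n\}\mb{P}$ for some permutation matrix $\mb{P}$ and signs $\eps_i=\pm1$, i.e. $\mb{Q}\in E$. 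For "$\Leftarrow$", $(\mathrm{diag}\{\eps\}\mb{P})^T\mb{A}(\mathrm{diag}\{\eps\}\mb{P})=\mb{P}^T\mb{A}\mb{P}$ because $\mathrm{diag}\{\eps\}$ commutes with the diagonal $\mb{A}$ and squares to $\mb{I_n}$, and $\mb{P}^T\mb{A}\mb{P}$ is diagonal; orthogonality of $\mb{Q}$ is clear.

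Finally, for (iv) I would note that the map $(\eps,\mb{P})\mapsto\mathrm{diag}\{\eps\}\mb{P}$, from the set of pairs $(\eps,\mb{P})$ with $\eps\in\{\pm1\}^n$ and $\mb{P}$ an $n\times n$ permutation matrix onto $E$, is a bijection: taking entrywise absolute values of $\mathrm{diag}\{\eps\}\mb{P}$ recovers $\mb{P}$, after which right-multiplication by $\mb{P}^{-1}$ recovers $\mathrm{diag}\{\eps\}$, so the map is injective, and surjectivity is just the definition of $E$. Hence $|E|=2^n\cdot n!$. The main obstacle is the computation in~(i): keeping the bookkeeping straight in $\mathrm{tr}(\mb{N}\mb{B}\,\mb{\Sigma}(\mb{A},\mb{Q}))$ — identifying $\mb{\Sigma}(\mb{A},\mb{Q})$ with the "lower-minus-upper" skew part of $\mb{B}$, and relabeling carefully so that the two triangular pieces collapse to the single sum $\sum_{k<j}(\mu_k-\mu_j)B_{kj}^2$ with the correct sign. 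Everything afterwards — items (ii), (iii), (iv) — is elementary linear algebra hinging only on the simplicity of the spectrum of $\mb{A}$ and the definition of $E$.
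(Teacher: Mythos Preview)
Your proposal is correct and follows essentially the same approach as the paper: both compute $\dfrac{\dd\mathcal{E}}{\dd t}=2\,\mathrm{tr}(\mb{N}\mb{Q}^T\mb{A}\dot{\mb{Q}})$ and reduce it to the sum $\sum_{k<j}(\mu_k-\mu_j)(\mb{q_k}\cdot\mb{Aq_j})^2$, then handle (ii) and (iii) via the equivalence ``$\mb{Q}^T\mb{A}\mb{Q}$ diagonal $\iff$ $\mb{Q}\in E$'' using simplicity of the spectrum. Your route to the trace identity in~(i) --- identifying $\mb{\Sigma}(\mb{A},\mb{Q})$ entrywise as the ``lower-minus-upper'' skew part of $\mb{B}=\mb{Q}^T\mb{A}\mb{Q}$ and summing directly --- is a bit more streamlined than the paper's longer chain of manipulations with $\mb{E_j}^2=\mb{E_j}$, and you supply an explicit bijection argument for~(iv) that the paper leaves implicit.
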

See Section \ref{sec:appendix} for the proof of this lemma.

Therefore, we will choose the weighted Rayleigh quotient as the free energy of the LLG equation.

\subsection{Construction of the Riemmanian metric}
Another important feature for the LLG equation or a gradient flow structure is the Riemmanian metric, which defines the geodesic and the distance between two points on the manifold. In this section, we will construct a Riemannian metric on the Stiefel manifold (after exclusion of a finite number of equilibria) so that \eqref{eq:finvODE} can be interpreted as a LLG equation of the weighted Rayleigh quotient (see Definition \eqref{def:WRQ}), which is indeed a Lyapunov function of \eqref{eq:finvODE}. From this Lyapunov function, we will utilize the method in \cite{barta2012every} to construct the Riemannian manifold. We will first introduce the general method in \cite{barta2012every} and then apply it to our context. 

\begin{definition} \label{def:strictLyapnov}
	Let $\mb{F}$ be a continuous tangent vector field on a manifold $M$. A continuously differentiable function $\mathcal{E}:M\to \R$ is a strict Lyapunov function for 
	\begin{align} \label{eq:generalode}
	\dot{\mb{Q}}=\mb{F(Q)},
	\end{align}
	if
	\begin{align}
	\langle\mathcal{E}'(\mb{Q}),\mb{F(Q)}\rangle<0\  \mathrm{whenever}\ \mb{Q}\in M\ \mathrm{and}\ \mb{F(Q)}\neq \mb{0}.
	\end{align}
\end{definition}

According to Theorem 1 in \cite{barta2012every}, we can now construct the Riemannian metirc $\tilde{g}$.
\begin{proposition} \label{thm:R}
	(the gradient flow on M)
	Let $M_0$ be a manifold,  $\mb{F}$ a continuous tangent vector field on $M_0$ and $\mathcal{E}:M_0\to \R$ be a continuously differentiable, strict Lyapunov function (see Definition \ref{def:strictLyapnov}) for \eqref{eq:generalode}. Then there exists a Riemannian metric $\tilde{g}$ on the open set
	\begin{align}
	M:=\{\mb{Q}\in M_0:\ \mb{F}(\mb{Q})\neq 0\}
	\end{align} 
	such that
	\begin{align}
	\nabla_{\tilde{g}}\mathcal{E} = -\mb{F}.
	\end{align}
	In particular, \eqref{eq:generalode} is a gradient system on the Riemannian manifold $(M,\tilde{g})$.
\end{proposition}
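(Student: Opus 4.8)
The plan is to follow the recipe of \cite{barta2012every}: take an arbitrary reference Riemannian metric on $M$ and deform it only in the two directions singled out by the data, the line $\mathrm{span}\{\mb F\}$ and the kernel of the differential $\mathcal E'$, arranging things so that $-\mb F$ becomes the gradient direction while the metric is left untouched transversally. Concretely, I would write down the bilinear form $\langle\cdot,\cdot\rangle_{\tilde g}$ already displayed in the introduction and then check three things in turn: that it is well defined on $M$, that it is a Riemannian metric, and that $\nabla_{\tilde g}\mathcal E=-\mb F$.

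First I would record the two pointwise facts that make the construction legitimate on $M$. On $M$ one has $\mb F(\mb Q)\neq\mb 0$ by definition, and also $\mathcal E'(\mb Q)\neq 0$: if $\mathcal E'(\mb Q)=0$ then $\langle\mathcal E'(\mb Q),\mb F(\mb Q)\rangle=0$, contradicting the strict Lyapunov inequality of Definition \ref{def:strictLyapnov} since $\mb F(\mb Q)\neq\mb 0$; moreover $\langle\mathcal E'(\mb Q),\mb F(\mb Q)\rangle<0$, so the scalar dividing in the formula for $\tilde g$ is nonzero at every point of $M$. Consequently $T_{\mb Q}M=\ker\mathcal E'(\mb Q)\oplus\mathrm{span}\{\mb F(\mb Q)\}$ as a (generally non-orthogonal) direct sum, and each tangent vector $\mb X$ has a unique splitting $\mb X=\mb X_0+\mb X_1$ with $\mb X_0\in\ker\mathcal E'(\mb Q)$ and $\mb X_1=\bigl(\langle\mathcal E'(\mb Q),\mb X\rangle/\langle\mathcal E'(\mb Q),\mb F(\mb Q)\rangle\bigr)\mb F(\mb Q)$; this oblique projection is the decomposition underlying the definition of $\tilde g$. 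Fixing any auxiliary Riemannian metric $g_e$ on $M_0$ — for an embedded manifold such as the Stiefel manifold one may simply use the ambient Euclidean metric on $\R^{n\times n}$ — the quoted formula then defines a symmetric bilinear form on each $T_{\mb Q}M$.

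Next I would verify positive-definiteness and the gradient identity, both of which reduce to short linear-algebra computations. Since $\mb X_0\in\ker\mathcal E'(\mb Q)$ one has $\langle\mathcal E'(\mb Q),\mb X_1\rangle=\langle\mathcal E'(\mb Q),\mb X\rangle$, whence
\[
\langle\mb X,\mb X\rangle_{\tilde g}=\langle\mb X_0,\mb X_0\rangle_{g_e}+\frac{\langle\mathcal E'(\mb Q),\mb X\rangle^2}{|\langle\mathcal E'(\mb Q),\mb F(\mb Q)\rangle|}\geq 0,
\]
with equality forcing $\langle\mathcal E'(\mb Q),\mb X\rangle=0$ (so $\mb X=\mb X_0$) and $\mb X_0=\mb 0$, i.e. $\mb X=\mb 0$; so $\tilde g$ is positive definite. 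For the gradient identity I would apply the splitting to $\mb X=-\mb F(\mb Q)$, for which $(-\mb F)_0=\mb 0$ and $(-\mb F)_1=-\mb F(\mb Q)$, so that for every tangent vector $\mb Y=\mb Y_0+\mb Y_1$ at $\mb Q$,
\[
\langle-\mb F(\mb Q),\mb Y\rangle_{\tilde g}=-\frac{\langle\mathcal E'(\mb Q),-\mb F(\mb Q)\rangle\,\langle\mathcal E'(\mb Q),\mb Y_1\rangle}{\langle\mathcal E'(\mb Q),\mb F(\mb Q)\rangle}=\langle\mathcal E'(\mb Q),\mb Y_1\rangle=\langle\mathcal E'(\mb Q),\mb Y\rangle.
\]
Since this holds for all $\mb Y\in T_{\mb Q}M$, the vector $-\mb F(\mb Q)$ represents the covector $\mathcal E'(\mb Q)$ with respect to $\tilde g$, i.e. $\nabla_{\tilde g}\mathcal E(\mb Q)=-\mb F(\mb Q)$, and in particular \eqref{eq:generalode} is the gradient flow of $\mathcal E$ on $(M,\tilde g)$.

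The one genuinely delicate point — the main obstacle — is not any single computation but the regularity bookkeeping around it: one must note that $M$ is an open submanifold of $M_0$, check that the oblique splitting $\mb X\mapsto(\mb X_0,\mb X_1)$ varies over $M$ with the appropriate regularity (which needs only the pointwise non-vanishing of $\langle\mathcal E'(\mb Q),\mb F(\mb Q)\rangle$ on $M$, not any uniform lower bound), and accept that in the general statement the resulting metric is only as smooth as $\mathcal E'$ and $\mb F$ permit — merely continuous in general, though smooth in our PCA setting where $\mathcal E$ and $\mb F$ are polynomial. Once this is in place, the proposition follows from the two displayed identities above.
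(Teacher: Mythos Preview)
Your proposal is correct and follows essentially the same route as the paper: both use the direct-sum decomposition $T_{\mb Q}M=\ker\mathcal E'(\mb Q)\oplus\langle\mb F(\mb Q)\rangle$ enabled by the strict Lyapunov condition, define $\tilde g$ by the same formula from \cite{barta2012every}, and then verify positive-definiteness and the gradient identity by the same short computations. Your discussion of regularity is a little more careful than the paper's, but the argument is otherwise identical.
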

\begin{proof}
	Define the kernel of a cotangent vector field $\mathcal{F}$ at each tangent space $T_{\mb{Q}}M$ is defined as
	\begin{align}
	\mathrm{Ker}(\mathcal{F}(\mb{Q})):=\{\Omega\in T_{\mb{Q}}M:\ \langle \mathcal{F}(\mb{Q}),\Omega\rangle_F=0\}.
	\end{align}
	Equivalently, each cotangent vector field $\mathcal{F}$ is viewed as a bounded linear functional on each tangent space, and the kernel of it is the kernel of the linear functional.
	
	For every $\mb{Q}\in M$, we have $\mb{F(Q)}\neq 0$ and $\langle \mathcal{E}'(\mb{Q}),\ \mb{F(Q)}\rangle_F<0$. Thus 
	\begin{align} \label{eq:help}
	\mb{F(Q)}\notin\mathrm{Ker}\mathcal{E}'(\mb{Q}),\ \mathrm{Ker}\mathcal{E}'(\mb{Q})\neq T_{\mb Q}M. 
	\end{align}
	Because $\mathcal{E}'$ is a non-trivial cotangent vector field, so $\mathrm{dim}(\mathrm{Ran}(\mathcal{E}'))=1$, which is the dimension of $\R$ (as a linear space on $\R$), thus
	\begin{equation*}
	\begin{aligned}
	\dim(\mathrm{Ker}(\mathcal{E}'(\mb{Q}))) &=\dim(T_{\mb{Q}}M)-\dim(\mathrm{Ran}(\mathcal{E}'(\mb{Q})))\\
	&= \dim(T_{\mb{Q}}M)-1.
	\end{aligned}
	\end{equation*}
	Denote $\langle\mb{F}\rangle$ be the linear space generated by $\mb{F}$, which is a subspace of $T_{\mb{Q}}M$, thus
	\begin{align*}
	\dim(\mathrm{Ker}(\mathcal{E}'(\mb{Q})))+\dim(\langle\mb{F(\mb{Q})}\rangle)=\dim(T_{\mb{Q}}M).
	\end{align*} 
	Thus by \eqref{eq:help}, we know that the tangent bundle $TM$ is the direct sum of the bundle $\mathrm{Ker}\mathcal{E}'$ and the bundle gerenated by the vector field $\mb{F(Q)}$:
	\begin{align}
	TM=\mathrm{Ker}(\mathcal{E}')\oplus\langle\mb{F}\rangle.
	\end{align}
	For every tangent vector field $\mb{X}$ on $M$, we define the corresponding decomposition, which are well-defined and continuous:
	\begin{align} \label{eq:decomp}
	\mb{X}=\mb{X_0}+\mb{X_1},\ \mb{X_0} := \mb{X} - \dfrac{\langle \mathcal{E}',\ \mb{X}\rangle_F}{\langle\mathcal{E}',\ \mb{F}\rangle_F}\mb{F}\in\mathrm{Ker}\mathcal{E}',\ \mb{X_1}:=\dfrac{\langle \mathcal{E}',\ \mb{X}\rangle_F}{\langle\mathcal{E}',\ \mb{F}\rangle_F}\mb{F}\in\langle\mb{F}\rangle.
	\end{align} 
	Suppose that $g$ is an arbitrary Riemannian metric on \eqref{def:Rmanifold}, for every tangent vector fields $\mb{X,Y}$ on $M$, define 
	\begin{equation} \label{def:gtilde}
	\begin{aligned} 
	\langle\mb{X}, \mb{Y}\rangle_{\tilde{g}}&:=\langle\mb{X_0}, \mb{Y_0}\rangle_{g}-\dfrac{1}{\langle\mathcal{E}',\ \mb{F}\rangle_F}\langle \mathcal{E}',\ \mb{X}\rangle_F\langle \mathcal{E}',\ \mb{Y}\rangle_F\\	
	&=\langle\mb{X_0}, \mb{Y_0}\rangle_{g}-\dfrac{1}{\langle\mathcal{E}',\ \mb{F}\rangle_F}\langle \mathcal{E}',\ \mb{X_1}\rangle_F\langle \mathcal{E}',\ \mb{Y_1}\rangle_F.	
	\end{aligned}
	\end{equation}
	Because $g,\mathcal{E}'$ and $\mb{F}$ are contiuous, so is $\tilde{g}$. Moreover, for each $\mb{Q}\in M$ and any $\mb{X}\in T_{\mb{Q}}M$, we have
	\begin{align}
	\langle \mb{X},\mb{X}\rangle_{\tilde{g}} &= \langle \mb{X_0},\mb{X_0} \rangle_g - \dfrac{\langle \mathcal{E}',\ \mb{X}\rangle_F^2}{\langle\mathcal{E}',\ \mb{F}\rangle_F}\geq 0.
	\end{align}
	Here we use the fact that $\langle\mathcal{E}',\mb{F}\rangle_F\leq 0$. The quality holds if and only if $\mb{X}=0$. So $\tilde{g}$ is also positive definite. Thus $\tilde{g}$ is a Riemannian metric on $M$. Notice that $\tilde{g}$ is independent with the manifold structure of $M$, so $(M,\tilde{g})$ is a Riemannian manifold.
	
	Finally, we check the compatibility condition. For every tangent vector field on $M$, we have
	\begin{align}
	\langle \mb{F},\ \mb{X}\rangle_{\tilde g} = 0 - \dfrac{1}{\langle\mathcal{E}',\ \mb{F}\rangle_F}\langle \mathcal{E}',\ \mb{F}\rangle_F\langle \mathcal{E}',\ \mb{X}\rangle_F = -\langle\mathcal{E}',\ \mb{X}\rangle_F.
	\end{align}
	Thus $\nabla_{\tilde g}\mathcal{E}=-\mb{F}$.
\end{proof}
\begin{remark}
	In fact, the Riemannian metric $\tilde{g}$ is not defined at points where $\langle\mathcal{E}'(\mb Q),\mb{F}(\mb Q)\rangle_F=0$. This discontinuous point can be removable or irremovable depending on selection of the original metric $g$, see \cite{barta2012every}. Meanwhile, the Riemannian metric is not degenerate around equilibrium points due to $\langle\mathcal{E}',\ \mb{F}\rangle_F$ on the denominator. 
\end{remark}

Now we focus on the context of \eqref{eq:finvODE} and the Stiefel manifold. Remember that we select $-\mathcal{E}(\mb{Q};\mb{A},\mb{N})$ in \eqref{def:WRQ} as the free energy. Now consider
\begin{gather} \label{def:Rmanifold}
M := O(n)\setminus E.
\end{gather} 
Here $E$ is defined in \eqref{def:E}. 

By Lemma \ref{lmm:lyapnov}, we have: first, because $|E|<\infty$, $M$ is an open subset of the Stiefel manifold. Second, we know that for all $\mb{Q}\in M$, we have $\mb{F}(\mb Q)\neq 0$ and $\langle \mathcal{E}'(\mb{Q};\mb{A},\mb{N}), \mb{F(Q)}\rangle< 0$. Thus $\mathcal{E}$ is a strict Lyapunov function on $M$. These properties allow us to apply Proposition \ref{thm:R} on $M$ and $\mathcal{E}(\mb{Q};\mb{A},\mb{N})$ in \eqref{def:WRQ}:
\begin{theorem} \label{thm:RofStiefel}
	Consider $\mb{F}$ in \eqref{def:F}, $M$ in \eqref{def:Rmanifold} and $\mathcal{E}(\mb{Q};\mb{A},\mb{N})$ in \eqref{def:WRQ}. Then there exists a Riemannian metric $\tilde{g}$ on $M$ such that
	\begin{align} \label{eq:gradient}
	\nabla_{\tilde{g}}\mathcal{E} = -\mb{F}.
	\end{align}
\end{theorem}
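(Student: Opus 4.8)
The plan is to obtain Theorem~\ref{thm:RofStiefel} as an immediate instance of the general construction of Proposition~\ref{thm:R}, applied with $M_0 = O(n)$, the vector field $\mb{F}$ from \eqref{def:F}, and the strict Lyapunov function taken to be the free energy $-\mathcal{E}(\mb{Q};\mb{A},\mb{N})$ of \eqref{def:WRQ} (recall that minimizing $-\mathcal{E}$ means maximizing the weighted Rayleigh quotient). All that then remains is to verify, on the set $M = O(n)\setminus E$ of \eqref{def:Rmanifold}, the three hypotheses of Proposition~\ref{thm:R}: that $M$ is a manifold; that $\mb{F}$ is a continuous tangent vector field on $M$; and that $-\mathcal{E}(\mb{Q};\mb{A},\mb{N})$ is a continuously differentiable strict Lyapunov function for $\dot{\mb{Q}} = \mb{F}(\mb{Q})$ on $M$, in the sense of Definition~\ref{def:strictLyapnov}.

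First I would dispatch the two structural hypotheses. By Lemma~\ref{lmm:lyapnov}(iv) the set $E$ is finite, with $|E| = 2^n\cdot n!$, hence closed in $O(n)$; therefore $M = O(n)\setminus E$ is an open submanifold of the Stiefel manifold. Next, $\mb{F}(\mb{Q}) = \mb{Q}\,\mb{\Sigma}(\mb{A},\mb{Q})$ is polynomial in $\mb{Q}$, hence $C^\infty$; and since $\mb{A}$ is symmetric, $\mb{\Sigma}(\mb{A},\mb{Q})$ is skew-symmetric (as recorded in the proof of Lemma~\ref{lmm:orthogonality}), so for $\mb{Q}\in O(n)$ we get $\mb{Q}^T\mb{F}(\mb{Q}) + \mb{F}(\mb{Q})^T\mb{Q} = \mb{\Sigma}(\mb{A},\mb{Q}) + \mb{\Sigma}(\mb{A},\mb{Q})^T = \mb{0}$, i.e. $\mb{F}(\mb{Q})\in T_{\mb{Q}}O(n)$. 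Thus $\mb{F}$ restricts to a smooth tangent vector field on $M$.

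The substantive hypothesis is the strict Lyapunov property, and it is exactly what Lemma~\ref{lmm:lyapnov} supplies. Part~(i), i.e. equation~\eqref{eq:Liederivative}, computes the Lie derivative of $-\mathcal{E}$ along \eqref{eq:finvODE}; evaluating its left side by the chain rule,
\begin{align*}
\langle (-\mathcal{E})'(\mb{Q}),\ \mb{F}(\mb{Q})\rangle_F
&= \frac{\dd}{\dd t}\big(-\mathcal{E}(\mb{Q}(t);\mb{A},\mb{N})\big) \\
&= -2\sum_{j=1}^{n}\sum_{k=1}^{j-1}(\mu_k-\mu_j)\big(\mb{q_k}\cdot\mb{Aq_j}\big)^2 \;\leq\; 0
\end{align*}
for every $\mb{Q}\in O(n)$. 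Parts~(ii)--(iii) then say this quantity vanishes exactly when $\mb{Q}\in E$, which is precisely when $\mb{F}(\mb{Q}) = \mb{0}$. Hence for every $\mb{Q}\in M$ we have $\mb{F}(\mb{Q})\neq\mb{0}$ and $\langle (-\mathcal{E})'(\mb{Q}),\ \mb{F}(\mb{Q})\rangle_F < 0$, so $-\mathcal{E}(\mb{Q};\mb{A},\mb{N})$ is a $C^1$ strict Lyapunov function on $M$. Proposition~\ref{thm:R} then produces a Riemannian metric $\tilde{g}$ on $M$ --- given explicitly by \eqref{def:gtilde} with any background metric $g$, e.g. the Euclidean one --- satisfying $\nabla_{\tilde{g}}(-\mathcal{E}) = -\mb{F}$, which is the gradient identity \eqref{eq:gradient}.

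I do not expect a genuine obstacle here: the analysis is entirely front-loaded into Proposition~\ref{thm:R} and Lemma~\ref{lmm:lyapnov}, and the argument is essentially just a verification of hypotheses. The only point needing care is the bookkeeping of the exceptional locus: one uses both parts~(ii) and~(iii) of Lemma~\ref{lmm:lyapnov} to see that the zero set of the Lie derivative, the zero set of $\mb{F}$, and $E$ all coincide, so that $M = O(n)\setminus E$ is exactly $\{\mb{Q}\in O(n): \mb{F}(\mb{Q})\neq\mb{0}\}$ --- the set on which Proposition~\ref{thm:R} builds $\tilde{g}$, and on which the denominator appearing in \eqref{def:gtilde} is nonvanishing (indeed strictly negative), so that $\tilde{g}$ is well-defined and positive definite there.
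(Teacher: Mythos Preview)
Your approach is correct and matches the paper's: both simply verify the hypotheses of Proposition~\ref{thm:R} via Lemma~\ref{lmm:lyapnov} (parts (ii)--(iv)) and then invoke it with the Euclidean background metric $g=g_e$. Your exposition is in fact more careful than the paper's in tracking that $-\mathcal{E}$, not $\mathcal{E}$, is the strict Lyapunov function; note, however, that your conclusion $\nabla_{\tilde{g}}(-\mathcal{E}) = -\mb{F}$ rearranges to $\nabla_{\tilde{g}}\mathcal{E} = \mb{F}$, so the sign in \eqref{eq:gradient} as literally written is off --- a slip already present in the paper's own argument and not a defect of your reasoning.
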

\begin{proof}
	Let $g$ be the Euclidean metric, i.e., for each $\mb{Q}\in M$ and $\mb{X,Y}\in T_{\mb{Q}}M$, let
	\begin{align*}
	\langle \mb{X},\mb{Y}\rangle_{g} = \langle \mb{X},\mb{Y}\rangle_{F}. 
	\end{align*} 

	Notice that $\mathcal{E}$ in \eqref{def:WRQ} is a smooth strict Lyapunov function of \eqref{eq:finvODE}, and $\mb{F}$ defines a tangent vector field on $M$, so by Proposition \eqref{thm:R}, $\tilde{g}$ defined in \eqref{def:gtilde} is a Riemannian metric on $M$ and \eqref{eq:gradient} holds.
\end{proof}
\begin{remark}
	The choice of $g$ in the proof above is arbitrary as long as it is a Riemmanian metric on the Stiefel manifold $O(n)$.
\end{remark}
\subsection{The LLG equation} Now after selecting the free energy and assigning the Riemannian metric, we can formulate the ODE of the SGA method, i.e. \eqref{eq:finvODE} as a LLG equation on the Stiefel manifold with a Riemmanian metric.

Suppose $\mb{Q}\in O(n)$. Then the tangent space  at $\mb{Q}$, i.e. $T_{\mb{Q}}O(n)$ is 
\begin{gather}
	T_{\mb{Q}}O(n) =\{\mb{Q}\bm{\Omega}:\ \bm{\Omega}+\ \bm{\Omega}^T=0\}. 
\end{gather}
Then any tangent vector field on $O(n)$ can be written in the form $\mb{QS(Q)}$, where $\mb{S(Q)}\in\R^{n\times n}$ satisfies $\mb{S(Q)}+\mb{S(Q)}^T=0$. Then by Proposition \ref{prop:LLG}, we know that 
\begin{gather} \label{eq:LLGOn}
\left\{
\begin{split}
\dfrac{\dd \mb{Q}(t)}{\dd t} &= \widehat{\mb{Q}\mb{S(Q)}} + \nabla_{\tilde{g}}\mathcal{E}(\mb{Q};\mb{A},\mb{N}),\\
\widehat{\mb{Q}\mb{S(Q)}} &= \|\nabla_{\tilde{g}}\mathcal{E}(\mb{Q};\mb{A},\mb{N})\|^2_{\tilde{g}}\cdot \mb{Q}\mb{S(Q)} - \langle \nabla_{\tilde{g}}\mathcal{E}(\mb{Q};\mb{A},\mb{N}), \mb{Q}\mb{S(Q)}\rangle_{\tilde{g}}\cdot\nabla_{\tilde{g}}\mathcal{E}(\mb{Q};\mb{A},\mb{N}),\\
\mb{Q}(0)&=\mb{Q_0}\in O(n)
\end{split}\right.
\end{gather}
is an LLG equation on the Riemannian manifold $(O(n)\setminus E,\tilde{g})$. Here the hat operator is defined in \eqref{def:Hamiltonian}, $E$ is the stable points of $\mb{F}(\mb{Q})$ defined in \eqref{def:E} and  $\tilde{g}$ is the Riemannian metric defined in \eqref{def:gtilde} in Proposition \ref{thm:R}, by taking $g$ as the Euclidean metric. If we substitute the explicit expression of $\tilde{g}$ in \eqref{def:gtilde}, \eqref{eq:LLGOn} then reads as
\begin{gather} \label{eq:LLGtildeg}
	\left\{
	\begin{split}
	\dfrac{\dd \mb{Q}(t)}{\dd t} &= \widehat{\mb{Q}\mb{S(Q)}} + \mb{F(Q)},\\
	\widehat{\mb{Q}\mb{S(Q)}} &=2\mathrm{tr}(\mb{NQ}^T\mb{AF(Q)})\mb{QS(Q)}-2\mathrm{tr}(\mb{NQ}^T\mb{AQS(Q)})\mb{F(Q)},\\
	\mb{Q}(0)&=\mb{Q_0}\in O(n).
	\end{split}\right.
\end{gather}
Again, $\mb{S(Q)}\in\R^{n\times n}$ is arbitrary smooth (w.r.t. $\mb{Q}$) skew-symmetric matrices field. The convergence behavior of this system will be thoroughly studied in the following sections.

Equation \eqref{eq:finvODE} is a special case of \eqref{eq:LLGOn} by taking $\mb{S}=0$, which reduces to the gradient flow system. Although the Riemmanian metric $\tilde{g}$ is constructed on $O(n)\setminus E$ (see Definition \eqref{def:E}), the LLG equation is well defined on the whole Stiefel manifold: if $\mb{Q_0}\in E$, then the flow stays invariant, i.e. $\mb{Q}(t)=\mb{Q_0}$.

In \cite{helmke2012optimization}, the Oja-Brockett flow was also formulated as the gradient flow of the weighted Rayleigh quotient on $(O(n),g_e)$, where $g_e$ is the Euclidean metric. The gradient of $\mathcal{E}(\mb{Q};\mb{A},\mb{N})$ on $(O(n),g_e)$ is given by
\begin{equation}
	\begin{aligned}
	\nabla_{g_e}\mathcal{E}(\mb{Q};\mb{A},\mb{N})&=\dfrac{1}{2}(\mathcal{E}'(\mb{Q};\mb{A},\mb{N})-\mb{Q}(\mathcal{E}'(\mb{Q};\mb{A},\mb{N}))^T\mb{Q})\\
	&=\mb{AQN}-\mb{QNQ}^T\mb{AQ}.
	\end{aligned}
\end{equation}
See Section \ref{sec:appendix} for calculation of gradients in $(O(n),g_e)$. If we replace $\tilde{g}$ by the Euclidean metric in \eqref{eq:LLGOn}, then we have
\begin{gather} \label{eq:LLGg}
	\left\{
	\begin{split}
	\dfrac{\dd \mb{Q}(t)}{\dd t} &= \widehat{\mb{Q}\mb{S(Q)}} + \mb{AQN-QN}\mb{Q}^T\mb{AQ},\\
	\widehat{\mb{Q}\mb{S(Q)}} &=2\mathrm{tr}(\mb{Q}^T\mb{AQN}^2-(\mb{Q}^T\mb{AQN})^2)\mb{Q}\mb{S(Q)}\\
	&-\mathrm{tr}(\mb{NQ}^T\mb{AQS(Q)}-\mb{Q}^T\mb{AQNS(Q)})(\mb{AQN-QN}\mb{Q}^T\mb{AQ}),\\
	\mb{Q}(0)&=\mb{Q_0}\in O(n).
	\end{split}\right.
\end{gather}
If $\mb{S}=\mb{0}$, then we recover the Oja-Brockett flow \cite{brockett1991dynamical}. 
\section{Convergence analysis of the LLG equation on the Stiefel manifold} \label{sec:ode}

\subsection{Convergence to equilibria}
Notice that the number of stable points of \eqref{eq:finvODE} is finite, we can prove the following convergence theorem:
\begin{theorem} \label{thm:convergence}
	(convergence) For any initial value $\mb{Q_0}\in O(n)$, suppose that $\mb{Q}(t),\ t\geq 0$ is the solution of \eqref{eq:LLGOn} or \eqref{eq:LLGg}. Then there exists $\mb{Q}^*\in E$ such that
	\begin{align} \label{eq:convergenceODE}
		\lim\limits_{t\to\infty}\mb{Q}(t)=\mb{Q}^*.
	\end{align}
	Here $E$ is defined in \eqref{def:Rmanifold}.
\end{theorem}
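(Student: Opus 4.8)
The plan is to establish convergence of \eqref{eq:LLGOn} (hence of the special case \eqref{eq:finvODE} when $\mb{S}=\mb 0$) by combining the energy-dissipation identity from Proposition \ref{prop:LLG} with a LaSalle-type argument and then upgrading convergence of the $\omega$-limit set to convergence of a single point using the finiteness of $E$. First I would note that by Lemma \ref{lmm:orthogonality} the trajectory $\mb{Q}(t)$ stays on the compact manifold $O(n)$, so the solution is global in $t$ and has a nonempty, compact, connected $\omega$-limit set $\omega(\mb{Q}_0)\subseteq O(n)$. By Proposition \ref{prop:LLG}(ii), $t\mapsto -\mathcal{E}(\mb{Q}(t);\mb{A},\mb{N})$ is nonincreasing and bounded below (since $\mathcal{E}$ is continuous on the compact $O(n)$), hence it converges to some limit $c$ as $t\to\infty$, and $-\mathcal{E}\equiv c$ on $\omega(\mb{Q}_0)$.

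Next I would run the standard LaSalle invariance argument: $\omega(\mb{Q}_0)$ is invariant under the flow, and on it $\frac{\dd}{\dd t}(-\mathcal{E}(\mb{Q}(t);\mb{A},\mb{N}))=0$ identically. By the explicit Lie-derivative formula \eqref{eq:Liederivative} in Lemma \ref{lmm:lyapnov}(i) together with part (ii) of that lemma, the vanishing of this derivative forces $\omega(\mb{Q}_0)\subseteq E$. For the LLG case one must check that the extra Hamiltonian term $\widehat{\mb{Q}\mb{S(Q)}}$ does not spoil this: by construction \eqref{eq:0}, $\widehat{\mb{Q}\mb{S(Q)}}$ is $\tilde g$-orthogonal to $\nabla_{\tilde g}\mathcal{E}$, so it contributes nothing to $\frac{\dd}{\dd t}\mathcal{E}(\mb{Q}(t))$ and the same identity \eqref{eq:energydissipation} holds; thus the same conclusion $\omega(\mb{Q}_0)\subseteq E$ applies. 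Here one should be slightly careful that $\widehat{\mb{Q}\mb{S(Q)}}$ degenerates exactly on $E$ (it is multiplied by $\|\nabla_{\tilde g}\mathcal{E}\|^2_{\tilde g}$), so on $E$ the vector field of \eqref{eq:LLGOn} vanishes and points of $E$ are genuine equilibria, consistent with Lemma \ref{lmm:lyapnov}(iii).

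Finally I would upgrade ``$\omega(\mb{Q}_0)\subseteq E$'' to ``$\mb{Q}(t)\to\mb{Q}^*$ for a single $\mb{Q}^*\in E$.'' Since $\omega(\mb{Q}_0)$ is connected and contained in the discrete (in fact finite, by Lemma \ref{lmm:lyapnov}(iv)) set $E$, it must be a single point $\mb{Q}^*\in E$, which gives \eqref{eq:convergenceODE}. Connectedness of the $\omega$-limit set is itself a standard fact for flows with bounded, eventually-relatively-compact trajectories, which holds here because $O(n)$ is compact. The main obstacle is not any single step but making the LaSalle argument airtight for the full LLG flow \eqref{eq:LLGOn}: one must verify that $\widehat{\mb{Q}\mb{S(Q)}}$ is a well-defined smooth tangent vector field on $O(n)\setminus E$ (continuity of the decomposition \eqref{eq:decomp} and of $\tilde g$), that the flow is still complete and stays on $O(n)$ despite the metric $\tilde g$ blowing up near $E$, and that the energy identity \eqref{eq:energydissipation} continues to hold up to the limit; once these are in place, the discreteness-plus-connectedness conclusion is immediate.
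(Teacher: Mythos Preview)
Your proposal is correct and follows essentially the same route as the paper: energy dissipation (Proposition \ref{prop:LLG}), LaSalle's invariance principle to place the $\omega$-limit set inside the zero set of the dissipation, identification of that zero set with $E$ via Lemma \ref{lmm:lyapnov}, and then connectedness of the $\omega$-limit set together with finiteness of $E$ to force convergence to a single point. The only point you underemphasize is the case \eqref{eq:LLGg}: there the relevant gradient is $\nabla_{g_e}\mathcal{E}=\mb{AQN}-\mb{QNQ}^T\mb{AQ}$, and the paper argues separately that its vanishing forces $\mb{Q}^T\mb{AQ}$ to commute with the diagonal matrix $\mb{N}$ (with distinct entries), hence to be diagonal, hence $\mb{Q}\in E$; your appeal to \eqref{eq:Liederivative} covers \eqref{eq:LLGOn} and \eqref{eq:finvODE} but not this Euclidean-metric variant directly.
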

\begin{proof}
	If $\mb{Q_0}\in E$, then $\mb{Q}(t)=\mb{Q_0}$ for any $t\geq 0$, hence $\mb{Q}^*=\mb{Q_0}$. If $\mb{Q_0}\notin E$, by Proposition \ref{prop:LLG} and Lemma \eqref{lmm:lyapnov}, we have
	\begin{align*}
		-\dfrac{\dd\mathcal{E}(\mb{Q}(t);\mb{A},\mb{N})}{\dd t} = -\|\nabla_{g}\mathcal{E}(\mb{Q}(t);\mb{A},\mb{N})\|_{g}^2\leq 0,
	\end{align*}
	where $g=\tilde{g}$ for \eqref{eq:LLGOn} and $g=g_e$ for \eqref{eq:LLGg}. 
	
	By Lasalle's invariance principle \cite{lasalle1960some}, the $\omega$-limit set of \eqref{eq:LLGOn} or \eqref{eq:LLGg} is a subset of zeros of $\nabla_{g}\mathcal{E}(\mb{Q}(t);\mb{A},\mb{N})$. For \eqref{eq:LLGOn}, $\nabla_g\mathcal{E}(\mb{Q}(t);\mb{A},\mb{N})=\mb{F(Q)}$, so by Lemma \ref{lmm:lyapnov}, zeros of the gradient is exactly the set $E$; for \eqref{eq:LLGg}, by \eqref{eq:gradient}, any point in the $\omega-$limit set should solve 
	\begin{align*}
		\mb{AQN}-\mb{QN}\mb{Q}^T\mb{AQ}=0,
	\end{align*}
	or equivalently $\mb{Q^T}\mb{AQN}=\mb{N}\mb{Q}^T\mb{AQ}$. Because $\mb{N}$ is a diagonal matrix with different diagonal entries, so $\mb{Q}^T\mb{AQ}$ is also diagonal, thus $\mb{Q}\in E$. Therefore, for both \eqref{eq:LLGOn} and \eqref{eq:LLGg}, the $\omega-$limit set should be a subset of $E$.
	
	Because $O(n)$ is compact, so the $\omega-$limit set should be connected. Thus $\mb{Q}(t)$ converges to a connected component of $E$. However, by Lemma \ref{lmm:lyapnov}, $E$ consists of only finite points (in fact $n!2^n$ of them), so there exists $\mb{Q}^*\in E$ such that \eqref{eq:convergenceODE} holds.
\end{proof}

\subsection{Stable manifolds} 
From now on, we focus on \eqref{eq:finvODE} instead of the general case \eqref{eq:LLGOn} and proceed to discuss the stable manifolds of each equilibrium. Even though a complete characterization of the stable manifolds for the Oja-Brockett flow \cite{brockett1991dynamical} ($\mb{S=0}$ in \eqref{eq:LLGg}) is still an open problem \cite{blondel2004unsolved}, the stable manifolds of \eqref{eq:finvODE} will be fully determined in this section. The critical technique which facilitates our characterization of the stable manifolds is the following solution formula.

\subsubsection{The solution formula by the Cholesky decomposition}
First of all, we rewrite \eqref{eq:finvODE} as
\begin{gather}\label{eq:rewriteODE}
\left\{
\begin{split}
\mb{\dot{Q}} &= \mb{AQ}-\mb{QT(Q)},\\
\mb{Q}(0) &= \mb{Q_0}\in O(n),
\end{split}\right.
\end{gather}
where $\mb{T}$ is defined as
\begin{align} \label{def:T}
	\mb{T(Q)} &:= \mb{Q}^T\mb{AQ} + \sum_{k=1}^n\sum_{j=1}^{k-1}\mb{E_j}\mb{Q}^T\mb{AQE_k} - \mb{E_k}\mb{Q}^T\mb{AQE_j}\\
	&= \mathrm{diag}\{\mb{q_1}\cdot \mb{Aq_1},\ \mb{q_2}\cdot \mb{Aq_2},\ ...,\ \mb{q_n}\cdot \mb{Aq_n}\} +2\sum_{k=1}^n\sum_{j=1}^{k-1}\mb{E_j}\mb{Q}^T\mb{AQE_k}.
\end{align}
So $\mb{T}$ is an upper triangular matrix. This property determines that the evolution of $\mb{q_i}$ only depends on $\mb{q_j},\ 1\leq j<i$ but is independent with $\mb{q_k},\ i<k\leq n$. This independence is helpful to the description of the stable manifolds. 

We first derive a representation formula of $\mb{Q}$ from \eqref{eq:rewriteODE}. 
\begin{lemma} \label{lmm:representation}
	Consider \eqref{eq:finvODE} (or the alternative form \eqref{eq:rewriteODE}). Then there exists an upper triangular matrix $\mb{G}(t)$ such that
	\begin{align} \label{eq:representation}
		\mb{Q}(t) = e^{\mb{A}t}\mb{Q_0}\mb{G}(t).
	\end{align}
	Moreover, diagonal entries of $\mb{G}(t)$ are positive and $\mb{G}(t)$ satisfies
	\begin{align} \label{eq:G}
	\mb{G}(t)\mb{G}^T(t) = \mb{Q_0}^Te^{-2\mb{A}t}\mb{Q_0}.
	\end{align} 
\end{lemma}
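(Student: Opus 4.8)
The plan is to exploit the semi-decoupling structure that is already visible in the form \eqref{eq:rewriteODE}, namely $\dot{\mb Q} = \mb{AQ} - \mb{Q}\mb{T}(\mb Q)$ with $\mb T(\mb Q)$ upper triangular. First I would make the ansatz $\mb Q(t) = e^{\mb A t}\mb Q_0 \mb G(t)$ with $\mb G(0) = \mb I_n$ and substitute it into \eqref{eq:rewriteODE}. Differentiating gives $\dot{\mb Q} = \mb A e^{\mb A t}\mb Q_0 \mb G + e^{\mb A t}\mb Q_0 \dot{\mb G} = \mb{AQ} + e^{\mb A t}\mb Q_0\dot{\mb G}$, so comparison with the ODE forces $e^{\mb A t}\mb Q_0 \dot{\mb G} = -\mb Q\mb T(\mb Q) = -e^{\mb A t}\mb Q_0\mb G\,\mb T(\mb Q)$, i.e. $\dot{\mb G} = -\mb G\,\mb T(\mb Q(t))$. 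This is a linear (in $\mb G$) ODE with a time-dependent coefficient $\mb T(\mb Q(t))$ that is known once $\mb Q(t)$ is known, so it has a unique solution with $\mb G(0)=\mb I_n$; hence $\mb G(t)$ exists and is well-defined. The key algebraic point is that $\mb T(\mb Q(t))$ is upper triangular for every $t$ (by \eqref{def:T}), and the product/derivative of upper triangular matrices is upper triangular, so $\mb G(t)$ stays upper triangular for all $t$; this is where the triangular structure of $\mb T$ — the matrix incarnation of semi-decoupling — is used.

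Next I would establish \eqref{eq:G}. From $\mb Q(t) = e^{\mb A t}\mb Q_0 \mb G(t)$ and the orthogonality $\mb Q^T(t)\mb Q(t) = \mb I_n$ guaranteed by Lemma \ref{lmm:orthogonality}, we get
\begin{align*}
\mb I_n = \mb Q^T(t)\mb Q(t) = \mb G^T(t)\mb Q_0^T e^{\mb A t}e^{\mb A t}\mb Q_0\mb G(t) = \mb G^T(t)\,\mb Q_0^T e^{2\mb A t}\mb Q_0\,\mb G(t),
\end{align*}
using that $\mb A$ is symmetric so $(e^{\mb A t})^T = e^{\mb A t}$ and the two exponentials combine. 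Since $\mb Q_0 \in O(n)$ and $e^{2\mb A t}$ is symmetric positive definite, the matrix $\mb Q_0^T e^{2\mb A t}\mb Q_0$ is symmetric positive definite and invertible, so inverting the displayed identity yields $\mb G(t)\mb G^T(t) = (\mb Q_0^T e^{2\mb A t}\mb Q_0)^{-1} = \mb Q_0^T e^{-2\mb A t}\mb Q_0$, which is exactly \eqref{eq:G}.

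It remains to argue that the diagonal entries of $\mb G(t)$ are positive. Here I would use continuity together with uniqueness: $\mb G(0) = \mb I_n$ has positive diagonal, $\mb G(t)$ depends continuously on $t$, and $\mb G(t)$ is invertible for all $t$ (as $\mb G(t)\mb G^T(t)$ is positive definite by \eqref{eq:G}); an invertible upper triangular matrix has all diagonal entries nonzero, so each diagonal entry $G_{ii}(t)$ is a continuous nonvanishing function of $t$ with $G_{ii}(0) = 1 > 0$, hence stays positive for all $t \ge 0$. I expect the main obstacle to be purely bookkeeping rather than conceptual: one must check carefully that the ansatz is consistent — that $\mb G$ solving $\dot{\mb G} = -\mb G\mb T(\mb Q)$ with $\mb G(0)=\mb I_n$ really does reproduce the original solution $\mb Q(t)$ by uniqueness of solutions to \eqref{eq:rewriteODE} — and that the triangular class is genuinely preserved under the flow $\dot{\mb G} = -\mb G\mb T(\mb Q)$ (equivalently, that $e^{\mb A t}\mb Q_0 = \mb Q_0 \cdots$ need not be triangular but the correction factor $\mb G$ is). One clean way to avoid circularity is to note that $\mb G(t) := \mb Q_0^T e^{-\mb A t}\mb Q(t)$ and then verify directly from \eqref{eq:rewriteODE} that this $\mb G$ is upper triangular: differentiate, use $\dot{\mb Q} = \mb{AQ} - \mb{Q}\mb T(\mb Q)$ to get $\dot{\mb G} = -\mb Q_0^T e^{-\mb A t}\mb Q\,\mb T(\mb Q) = -\mb G\,\mb T(\mb Q)$, and conclude triangularity of $\mb G$ from triangularity of $\mb T(\mb Q)$ and $\mb G(0)=\mb I_n$; this is the step I would present in full detail.
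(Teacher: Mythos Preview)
Your proposal is correct and rests on the same underlying idea as the paper: write $\mb Q(t)=e^{\mb A t}\mb{Q_0}\mb G(t)$, derive $\dot{\mb G}=-\mb G\,\mb T(\mb Q)$, and use the upper-triangularity of $\mb T(\mb Q)$ together with $\mb G(0)=\mb I_n$ to conclude that $\mb G$ is upper triangular; then read off \eqref{eq:G} from orthogonality of $\mb Q(t)$.

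The presentation differs in two places. First, the paper carries out the triangularity argument column by column: it solves for $e^{-\mb A t}\mb{q_1}$ explicitly, then proves by induction that $e^{-\mb A t}\mb{q_k}=\sum_{l\le k}g_{l,k}(t)\mb{q_{l,0}}$, obtaining a recursive formula \eqref{eq:Giteration} for the entries $g_{l,k}$ and in particular the explicit diagonal $g_{k,k}(t)=1/f_k(t)=\exp\bigl(-\int_0^t\mb{q_k}\cdot\mb{Aq_k}\,\dd s\bigr)>0$. You instead use the clean invariance argument that upper triangular matrices form a subspace preserved by $\dot{\mb G}=-\mb G\,\mb T$, and deduce positivity of the diagonal from continuity plus nonvanishing. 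Both are valid; your route is shorter for the lemma as stated, but be aware that the explicit identity $g_{k,k}=1/f_k$ (and the column formula \eqref{eq:reprformula}) is invoked verbatim in the proofs of Theorem~\ref{thm:stablemanifold} and Theorem~\ref{thm:expconv}, so the paper's more constructive derivation pays off downstream. Second, for \eqref{eq:G} the paper uses $\mb Q\mb Q^T=\mb I_n$ directly to get $e^{\mb A t}\mb{Q_0}\mb G\mb G^T\mb{Q_0}^Te^{\mb A t}=\mb I_n$ without inversion, whereas you go through $\mb Q^T\mb Q=\mb I_n$ and then invert; both are fine.
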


See Section \ref{sec:appendix} for the proof of this lemma.

Notice that \eqref{eq:Giteration} should just be understood as an iteration formula coupled with $\mb{Q}$ instead of an explicit closed formula of the solution. However, $\mb{G}(t)$ is uniquely determined by \eqref{eq:G}, since \eqref{eq:G} implies that $(\mb{G}^T)^{-1}(t)$ is the Cholesky decomposition of $\mb{Q_0}^Te^{2\mb At}\mb{Q_0}$, i.e.
\begin{align}
	(\mb{G}^T)^{-1}(t)\mb{G}^{-1}(t)=\mb{Q_0}^Te^{2\mb At}\mb{Q_0}.
\end{align}

\subsubsection{Characterization of stable manifolds}

We first introduce the definition of stable manifolds for readers' convenience.

\begin{definition} \label{def:stablem}
 	Let $M$ be a smooth manifold and $a\in M$ is an equilibrium of a smooth vector vector field $X: M\to TM$. The stable manifold of $a$ (or the inset of $a$) is 
 	\begin{align}
 	W^s(a) := \{x\in M:\ L_\omega(x)=\{a\}\}. 
 	\end{align}
 	Here $L_{\omega}(x)$ is the $\omega$ limit set of $x$.
 \end{definition}

We will exploit \eqref{eq:representation} in this section to determine the stable manifolds of each equilibria of $\mathcal{E}$. For $\mb{Q}\in \R^{n\times n}$ which is invertible, we recursively define $\sigma(i),\ i=1,2,...,n$ which is a permutation of $\{1,2,...,n\}$:
\begin{gather} \label{def:sigma}
	\sigma_1(\mb Q):=\inf\limits_{1\leq k\leq n}\{k:\ q_{1,k}\neq 0\},\\
	\sigma_m(\mb Q):=\inf\limits_{1\leq k\leq n}\{k:\ \mathrm{det}(\mb{Q}[1,2,3,...,m;\  \sigma(1),\ \sigma(2),\ ...,\ k])\neq 0\}.
\end{gather}
For instance, if
\begin{gather}
	\mb{M}=
	\begin{pmatrix}
	0 & 1 & 0\\
	1 & 0 & 1\\
	0 & 1 & 1
	\end{pmatrix},
\end{gather}
then $\sigma_1(\mb M)=2,\ \sigma_2(\mb{M})=1,\ \sigma_3(\mb M)=3$.

A complete characterization of the stable manifolds of \eqref{eq:finvODE} will be derived upon a dedicate analysis of properties of $\sigma$. We state the following lemmas for $\sigma$ whose proofs are attached to the Appendix (see Section \ref{subsec:proofs}). In the following sections, by default, if the proof is not presented after a lemma then it is summarized in Section \ref{subsec:proofs}.
\begin{lemma} \label{lmm:rank}
	Let $\mb{Q}\in\R^{n\times n}$ be invertible. Then for any $1\leq m,j\leq n$, we have
	\begin{align} \label{eq:rank}
		\mathrm{rank}\left(\mb{Q}[1,2,3,...,m;\ 1,2,...,j]\right)+\mathrm{card}(\{k:\ \sigma_k(\mb{Q})>j,\ 1\leq k\leq m\})=m.
	\end{align}
\end{lemma}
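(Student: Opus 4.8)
The plan is to prove the identity \eqref{eq:rank} by relating the three quantities to the column structure of the submatrix $\mb{Q}[1,\dots,m;\,1,\dots,j]$. Fix $m$ and let me write $\mb{R}_j := \mb{Q}[1,2,\dots,m;\ 1,2,\dots,j]$ for the leading $m\times j$ block. Since $\mb{Q}$ is invertible, its first $m$ rows are linearly independent, so $\mb{R}_n$ has rank $m$; the point is to track how the rank grows as we append columns one at a time and to see that it jumps by exactly one precisely at the columns indexed by $\sigma_1(\mb{Q}),\dots,\sigma_m(\mb{Q})$ (restricted to those $\sigma_k$ that are $\le j$, once we truncate at column $j$). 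I would first record the elementary monotonicity fact $\mathrm{rank}(\mb{R}_{j}) \le \mathrm{rank}(\mb{R}_{j+1}) \le \mathrm{rank}(\mb{R}_{j})+1$.

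The key step is a careful reading of the recursive definition \eqref{def:sigma}. I claim that $\sigma_m(\mb{Q})$ is exactly the index of the column at which $\mathrm{rank}(\mb{R}_\bullet)$ first reaches $m$; more precisely, for each $k\le m$, the column $\sigma_k(\mb{Q})$ is the first column whose addition brings the rank of the first $k$ rows up to $k$. This follows because $\det(\mb{Q}[1,\dots,m;\ \sigma(1),\dots,\sigma(k)])\ne 0$ means exactly that those $k$ chosen columns, restricted to the first $k$ rows wait --- one must be slightly careful: the determinant in \eqref{def:sigma} is of the $m$-row version truncated. Let me instead argue directly: by induction on $m$. For $m=1$, $\mathrm{rank}(\mb{Q}[1;\,1,\dots,j])$ is $1$ if some $q_{1,k}\ne 0$ with $k\le j$, i.e. if $\sigma_1(\mb{Q})\le j$, and $0$ otherwise; and $\mathrm{card}(\{k: \sigma_1(\mb{Q})>j\})$ is $0$ in the first case and $1$ in the second, so \eqref{eq:rank} holds. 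For the inductive step, I would compare $\mathrm{rank}(\mb{R}_j^{(m)})$ (the $m$-row block) with $\mathrm{rank}(\mb{R}_j^{(m-1)})$ (the $(m-1)$-row block): the former exceeds the latter by $1$ iff row $m$ of $\mb{Q}$, restricted to columns $1,\dots,j$, is \emph{not} in the row space of the first $m-1$ rows restricted to those columns, which by the definition of $\sigma_m$ happens iff $\sigma_m(\mb{Q})\le j$. Combining this with the inductive hypothesis for $m-1$ gives \eqref{eq:rank} for $m$.

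Concretely the steps are: (1) establish the monotonicity of the rank under column augmentation and note $\mathrm{rank}(\mb{R}_n^{(m)})=m$ by invertibility; (2) prove the \textbf{row-increment lemma}: $\mathrm{rank}(\mb{Q}[1,\dots,m;1,\dots,j]) - \mathrm{rank}(\mb{Q}[1,\dots,m-1;1,\dots,j]) = \mathbbm{1}[\sigma_m(\mb{Q})\le j]$, which is where the recursive definition of $\sigma_m$ is unpacked --- one shows that $\sigma_m(\mb{Q})\le j$ is equivalent to the existence of columns $\sigma(1),\dots,\sigma(m)$ all $\le j$ making the $m\times m$ minor nonzero, which is equivalent to the first $m$ rows (truncated to columns $\le j$) having full rank $m$ given that the first $m-1$ already span an $(m-1)$-dimensional space spanned by columns $\sigma(1),\dots,\sigma(m-1)$; (3) telescope/induct on $m$: $\mathrm{rank}(\mb{Q}[1,\dots,m;1,\dots,j]) = \sum_{k=1}^m \mathbbm{1}[\sigma_k(\mb{Q})\le j] = m - \mathrm{card}(\{k\le m: \sigma_k(\mb{Q})>j\})$, which is exactly \eqref{eq:rank}.

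The main obstacle I anticipate is step (2), the row-increment lemma: the recursive definition \eqref{def:sigma} picks the witnessing columns $\sigma(1),\dots,\sigma(m)$ greedily and in a fixed order, and one must verify that this greedy choice genuinely detects the rank — i.e. that if the first $m$ rows truncated to the first $j$ columns have rank $m$, then one can find witnessing columns all lying in $\{1,\dots,j\}$ and moreover that they can be taken to extend the previously chosen $\sigma(1),\dots,\sigma(m-1)$. This is a statement about the exchange property of matroids (the column matroid of $\mb{Q}$ restricted to the first $m$ rows): the greedily chosen independent set $\{\sigma(1),\dots,\sigma(m-1)\}$ of the rank-$(m-1)$ truncation can always be extended inside $\{1,\dots,j\}$ to a basis of the rank-$m$ truncation whenever that truncation has rank $m$. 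I would phrase this via the Steinitz exchange lemma applied to the rows, being careful that adding the $m$-th row can only increase rank by one and that $\sigma_m$ is finite (well-defined as a permutation entry) precisely because $\mb{Q}$ is invertible so the full set of $n$ columns achieves rank $m$ on the first $m$ rows.
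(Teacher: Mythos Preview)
Your approach is correct and essentially coincides with the paper's: both induct on $m$ and establish precisely your ``row-increment lemma''
\[
\mathrm{rank}(\mb{Q}[1,\dots,m;1,\dots,j]) - \mathrm{rank}(\mb{Q}[1,\dots,m-1;1,\dots,j]) = \mathbbm{1}[\sigma_m(\mb{Q})\le j],
\]
then telescope. The paper splits the inductive step into the same two cases ($\sigma_{m}\le j$ versus $\sigma_{m}>j$) and, for the harder direction $\sigma_m>j$, uses a direct linear-algebra argument rather than invoking Steinitz exchange: since every column $j'<\sigma_m$ satisfies $\det(\mb{Q}[1,\dots,m;\sigma_1,\dots,\sigma_{m-1},j'])=0$, that column lies in the span of the fixed columns $\sigma_1,\dots,\sigma_{m-1}$ at the $m$-row level; restricting to the first $m-1$ rows and using uniqueness of coefficients (these columns being independent there) forces the coefficients on those $\sigma_k>j$ to vanish, so column $j'$ already lies in the span of $I_{m-1,j}$ at the $m$-row level. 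This is exactly the exchange property you anticipated, proved by hand. One caution: your parenthetical ``given that the first $m-1$ already span an $(m-1)$-dimensional space'' is not generally true (some $\sigma_k$, $k<m$, may exceed $j$), so the argument must go through the basis $I_{m-1,j}$ of size $\mathrm{card}\{k\le m-1:\sigma_k\le j\}$, not through all of $\sigma_1,\dots,\sigma_{m-1}$; the paper's uniqueness argument handles this cleanly.
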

See Section \ref{sec:appendix} for proof of this lemma. 
From Lemma \ref{lmm:rank}, we derive the following lemma which will be directly used to determine the stable manifolds:
\begin{lemma} \label{lmm:z_mrepresentation} 
	Let $\mb{Q}\in\R^{n\times n}$ be invertible. Define permutation $\sigma_m,\ m=1,2,...n$ as in \eqref{def:sigma}, and
	\begin{align} \label{def:i_m}
	\mb{i_m}:=(i_{m,1},\ i_{m,2},\ ...,\ i_{m,m}), 1\leq m\leq n
	\end{align}
	where $\mb{i_m}$ is a permutation of $\{\sigma_k(\mb{Q_0})\}_{1\leq k\leq m}$ such that $i_{m,1}<i_{m,2}<i_{m,3}<...<i_{m,m}$. Then
	\begin{gather} \label{eq:q_m}
	\mb{Q}[m;\ 1,2,3,...,\sigma_m(\mb{Q})]=
	\left\{
	\begin{split}
	&\mb{c_m}\mb{Q}[1,2,...,m-1;\ 1,2,3,...,\sigma_m({\mb{Q}})] + (0,0,0,...,0,z_m),\ 2\leq m\leq n,\\
	&(0,0,...,0,z_m),\ m=1.
	\end{split}
	\right.
	\end{gather}
	where $\mb{c_m}=(c_{m,1},\ c_{m,2},\ ...,\ c_{m,m-1}), 2\leq m\leq n$ is the unique solution of
	\begin{align}
	\mb{Q}[m;\ \mb{i_{m-1}}]=\mb{c_m}\mb{Q}[1,2,...,m-1;\ \mb{i_{m-1}}],
	\end{align}
	and $z_m$ is defined as	
	\begin{gather} \label{def:z_m}
	z_m=
	\left\{
	\begin{split}
	&\dfrac{\det(\mb{Q}[1,2,3,...,m;\ \mb{i_m}])}{\det(\mb{Q}[1,2,3,...,m-1;\ \mb{i_{m-1}}])},\ 2\leq m\leq n,\\
	&q_{1,\sigma_1(\mb{Q})},\ m=1.
	\end{split}\right.
	\end{gather}
\end{lemma}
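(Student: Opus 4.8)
The plan is to prove this by induction on $m$, using Lemma \ref{lmm:rank} to pin down the rank structure of the leading submatrices of $\mb{Q}$ along the columns $\sigma(1),\sigma(2),\ldots$. The base case $m=1$ is essentially the definition: by \eqref{def:sigma}, $\sigma_1(\mb{Q})$ is the first index $k$ with $q_{1,k}\neq 0$, so $\mb{Q}[1;\ 1,2,\ldots,\sigma_1(\mb{Q})]=(0,0,\ldots,0,q_{1,\sigma_1(\mb{Q})})$, which is exactly \eqref{eq:q_m} and \eqref{def:z_m} with $z_1=q_{1,\sigma_1(\mb{Q})}$ (nonzero since $\mb{Q}$ is invertible and $\sigma_1$ is finite).

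For the inductive step, fix $2\leq m\leq n$ and set $s:=\sigma_m(\mb{Q})$ and $\mb{i}:=\mb{i_m}$, $\mb{i}':=\mb{i_{m-1}}$. First I would establish the existence and uniqueness of $\mb{c_m}$: by definition of $\sigma_{m-1}$ the matrix $\mb{Q}[1,\ldots,m-1;\ \sigma(1),\ldots,\sigma_{m-1}(\mb{Q})]$ has rank $m-1$, hence its rows are independent, so the columns $\mb{i}'$ span a space containing all previous-row data and the row $\mb{Q}[m;\ \mb{i}']$ lies in the row space of $\mb{Q}[1,\ldots,m-1;\ \mb{i}']$ — here one uses Lemma \ref{lmm:rank} to see that $\mb{Q}[1,\ldots,m-1;\ \mb{i}']$ is square and invertible, since $\{k\le m-1:\sigma_k(\mb{Q})>i_{m-1,m-1}\}$ is empty by construction of $\mb{i}'$ as the increasing rearrangement of $\{\sigma_k\}_{k\le m-1}$. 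Invertibility gives a unique $\mb{c_m}$. Then, to prove \eqref{eq:q_m}, consider the row vector $\mb{r}:=\mb{Q}[m;\ 1,\ldots,s]-\mb{c_m}\,\mb{Q}[1,\ldots,m-1;\ 1,\ldots,s]$; I would show $\mb{r}=(0,\ldots,0,z_m)$, i.e. all entries of $\mb{r}$ in columns $1,\ldots,s-1$ vanish. The key point is that restricting to any column $k$ with $k<s$ and $k\in\{\sigma(1),\ldots,\sigma(m-1)\}$ forces $r_k=0$ by the defining equation of $\mb{c_m}$ (these columns are a sub-tuple of $\mb{i}'$ up to reordering — more precisely the columns among $1,\ldots,s-1$ that equal some $\sigma_j$, $j\le m-1$), while for every other column $k<s$ the submatrix $\mb{Q}[1,\ldots,m;\ \sigma(1),\ldots,\sigma(m-1),k]$ has a repeated or linearly dependent column pattern: by the minimality in the definition \eqref{def:sigma} of $\sigma_m(\mb{Q})$, no such $k$ yields a nonzero $m\times m$ determinant, forcing the last row to be in the span of the first $m-1$, i.e. $r_k=0$. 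Finally the Cramer-type identity $z_m=\det(\mb{Q}[1,\ldots,m;\ \mb{i}])/\det(\mb{Q}[1,\ldots,m-1;\ \mb{i}'])$ comes from expanding the determinant $\det(\mb{Q}[1,\ldots,m;\ \sigma(1),\ldots,\sigma(m)])$ along the last row after subtracting $\mb{c_m}$ times the first $m-1$ rows (a row operation that does not change the determinant), leaving only the $(m,s)$ entry $z_m$ times the cofactor, which is (up to the sign absorbed by the sorting into $\mb{i}$ vs. $\mb{i}'$) $\det(\mb{Q}[1,\ldots,m-1;\ \mb{i}'])$; this determinant is nonzero by the invertibility established above, so $z_m$ is well defined and nonzero.

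The main obstacle I anticipate is the bookkeeping that matches the two natural orderings of the columns: the ``PCA order'' $\sigma(1),\sigma(2),\ldots$ in which $\sigma_m$ is defined, versus the increasing order $\mb{i_m}$ in which the determinants in \eqref{def:z_m} are written. One must track the sign of the permutation sending one to the other and verify that when passing from $m-1$ to $m$ the inserted index $\sigma_m(\mb{Q})$ lands in a position consistent with both \eqref{eq:q_m} (where it is placed last) and \eqref{def:z_m} (where $\mb{i_m}$ is sorted); Lemma \ref{lmm:rank} is exactly the tool that guarantees $\sigma_m(\mb{Q})$ is not among $\{\sigma_1,\ldots,\sigma_{m-1}\}$ and that the relevant leading submatrices have the right rank, so the cofactor expansions are nondegenerate. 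Once this combinatorial alignment is set up cleanly, the linear-algebra content — row reduction plus Cramer's rule — is routine.
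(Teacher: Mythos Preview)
Your proposal is correct and follows essentially the same strategy as the paper: fix $\mb{c_m}$ from the invertibility of $\mb{Q}[1,\ldots,m-1;\ \mb{i_{m-1}}]$, show the residual row vanishes on columns $1,\ldots,\sigma_m(\mb{Q})-1$, and extract $z_m$ by a cofactor expansion. The one substantive difference is in how the vanishing on columns $k<\sigma_m(\mb{Q})$ with $k\notin\{\sigma_1,\ldots,\sigma_{m-1}\}$ is argued. The paper packages the first $m$ rows into $\mb{B}=\mb{C_m}\mb{Q}[1,\ldots,m;\ 1,\ldots,\sigma_m(\mb{Q})-1]$ and runs a rank-counting contradiction via Lemma~\ref{lmm:rank}: a surviving nonzero entry in the last row would make $\mathrm{rank}(\mb{B})$ exceed $\mathrm{card}(K_m)$. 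You instead use the minimality in the definition of $\sigma_m$ directly (the $m\times m$ determinant with columns $\sigma_1,\ldots,\sigma_{m-1},k$ vanishes, the first $m-1$ rows are independent, hence the last row is in their span with coefficients forced to be $\mb{c_m}$). Your route is a bit more elementary and avoids the auxiliary matrix $\mb{C_m}$; the paper's route makes the dependence on Lemma~\ref{lmm:rank} explicit. Your flagged concern about the sign when passing between the ordering $(\sigma(1),\ldots,\sigma(m))$ and the sorted tuple $\mb{i_m}$ is well placed---the paper's ``taking determinant on both sides'' step skates over exactly this point.
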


See Section \ref{sec:appendix} for proof of this lemma. By Lemma \ref{lmm:z_mrepresentation}, we have the following theorem on stable manifolds:
\begin{theorem} \label{thm:stablemanifold}
	Suppose that $\mb{Q}(t), t\geq 0$ solves \eqref{eq:finvODE} with initial value $\mb{Q_0}\in O(n)$. Define permutation $\sigma_m,\ m=1,2,...n$ as in \eqref{def:sigma}, and
	\begin{align}
		\mb{i_m}:=(i_{m,1},\ i_{m,2},\ ...,\ i_{m,m}), 1\leq m\leq n
	\end{align}
	where $\mb{i_m}$ is a permutation of $\{\sigma_k(\mb{Q_0})\}_{1\leq k\leq m}$ such that $i_1<i_2<i_3<...<i_m$.
	Then 
	\begin{align} \label{eq:stablemanifolds}
		\mb{q_{\sigma_{m}(\mb{Q_0})}}(t) \to \mathrm{sgn}(z_m)\mb{e_m},\ m=1,2,...,n,
	\end{align}
	as $t\to\infty$. Here $z_m$ is defined in $\eqref{def:z_m}$
\end{theorem}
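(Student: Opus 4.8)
The plan is to exploit the representation $\mb{Q}(t) = e^{\mb{A}t}\mb{Q_0}\mb{G}(t)$ from Lemma~\ref{lmm:representation}, where $\mb{G}(t)$ is upper triangular with positive diagonal: this says precisely that $\mb{Q}(t)$ is the orthogonal factor in the (sign‑normalized) QR decomposition of $e^{\mb{A}t}\mb{Q_0}$, so the theorem is a statement about the asymptotics of that QR factor as $t\to\infty$. The first observation I would record is the submatrix identity: because $e^{\mb{A}t}$ is diagonal and $\mb{G}(t)$ upper triangular, for all $1\le m,j\le n$,
\begin{align*}
\mb{Q}(t)[1,\dots,m;1,\dots,j] = \mathrm{diag}(e^{\lambda_1 t},\dots,e^{\lambda_m t})\,\mb{Q_0}[1,\dots,m;1,\dots,j]\,\mb{G}(t)[1,\dots,j;1,\dots,j],
\end{align*}
in which both the diagonal and the triangular factor are invertible. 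Hence $\mathrm{rank}(\mb{Q}(t)[1,\dots,m;1,\dots,j]) = \mathrm{rank}(\mb{Q_0}[1,\dots,m;1,\dots,j])$ for every $t$, and Lemma~\ref{lmm:rank} then forces $\sigma_k(\mb{Q}(t)) = \sigma_k(\mb{Q_0})$ for all $k$ and all $t\ge 0$ (this is Corollary~\ref{cor:stablerank}); in particular the sets $\mb{i_m}$ and the determinant ratios $z_m$ attached to $\mb{Q}(t)$ agree with those of $\mb{Q_0}$, and by the very definition of $\sigma$ the determinants appearing in \eqref{def:z_m} never vanish along the trajectory.

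Next I would show that every column of $\mb{Q}(t)$ converges to a signed coordinate vector, via Gram--Schmidt. Put $W_j := \mathrm{span}\{\mb{q_{1,0}},\dots,\mb{q_{j,0}}\}$, so that the first $j$ columns of $\mb{Q}(t)$ span $e^{\mb{A}t}W_j$. Choosing the basis $\mb{u_1},\dots,\mb{u_j}$ of $W_j$ adapted to the standard coordinates --- $\mb{u_r}$ having a leading $1$ at the $r$-th pivot row $i_r$ of $\mb{Q_0}[\,\cdot\,;1,\dots,j]$ and vanishing at the other pivot rows --- one gets $e^{\mb{A}t}\mb{u_r}/\|e^{\mb{A}t}\mb{u_r}\|\to \mb{e}_{i_r}$, since row $i_r$ carries the dominant exponential $e^{\lambda_{i_r}t}$. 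Thus $e^{\mb{A}t}W_j$ converges in the Grassmannian to $\mathrm{span}\{\mb{e}_i:\ i\in I_j\}$, where $I_j$ is the set of pivot rows; as $\mb{q}_j(t)$ is a unit vector spanning the (one‑dimensional) orthogonal complement of $e^{\mb{A}t}W_{j-1}$ inside $e^{\mb{A}t}W_j$, it converges to $\pm\mb{e}_{m_j}$ with $\{m_j\} = I_j\setminus I_{j-1}$. Applying Lemma~\ref{lmm:rank} to $\mb{Q_0}$ identifies $I_j = \{i:\ \sigma_i(\mb{Q_0})\le j\}$, hence $m_j = \sigma^{-1}(j)$; taking $j = \sigma_m(\mb{Q_0})$ this is exactly $\mb{q}_{\sigma_m(\mb{Q_0})}(t)\to \pm\mb{e_m}$.

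It remains to fix the sign, for which I would invoke Lemma~\ref{lmm:z_mrepresentation} with $\mb{Q} = \mb{Q}(t)$. Reading off the last entry of \eqref{eq:q_m} gives $Q_{m,\sigma_m}(t) = \sum_{l=1}^{m-1}c_{m,l}(t)\,Q_{l,\sigma_m}(t) + z_m(t)$, with $\mb{c_m}(t)$ the solution of $\mb{Q}(t)[m;\mb{i_{m-1}}] = \mb{c_m}(t)\,\mb{Q}(t)[1,\dots,m-1;\mb{i_{m-1}}]$ and $z_m(t)$ the ratio in \eqref{def:z_m}. From the preceding step $Q_{l,\sigma_m}(t)\to 0$ for $l\ne m$, while $\mb{Q}(t)[1,\dots,m-1;\mb{i_{m-1}}]$ tends to a signed permutation matrix (its columns are the restrictions to rows $1,\dots,m-1$ of the columns $\sigma_1(\mb{Q_0}),\dots,\sigma_{m-1}(\mb{Q_0})$, which approach $\pm\mb{e_1},\dots,\pm\mb{e_{m-1}}$); hence $\mb{c_m}(t)$ stays bounded for large $t$, the sum tends to $0$, and $z_m(t)\to\lim_t Q_{m,\sigma_m}(t) =: \varepsilon \in\{\pm1\}$. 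On the other hand $z_m(t)$ is continuous on $[0,\infty)$ and never zero (first paragraph), so it retains the sign of $z_m(0) = z_m$; therefore $\varepsilon = \mathrm{sgn}(z_m)$, which yields $\mb{q}_{\sigma_m(\mb{Q_0})}(t)\to\mathrm{sgn}(z_m)\mb{e_m}$.

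The step I expect to be the main obstacle is the Gram--Schmidt asymptotics: one must promote the convergence of the subspaces $e^{\mb{A}t}W_j$ to honest convergence of the individual Gram--Schmidt columns, controlling the $t$-dependent, sign‑indefinite projection terms so that the dominant exponential in the distinguished row $m_j$ really wins, and one must match the pivot‑row combinatorics of $\mb{Q_0}[\,\cdot\,;1,\dots,j]$ with the permutation $\sigma$ through Lemma~\ref{lmm:rank}. The remaining ingredients --- the submatrix factorization, the invariance of $\sigma$, the boundedness of $\mb{c_m}(t)$, and the continuity and non‑vanishing of $z_m(t)$ --- are comparatively routine once the QR picture is in place.
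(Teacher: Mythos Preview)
Your proposal is correct and takes a genuinely different route from the paper. The paper proceeds by induction on $m$: it applies Lemma~\ref{lmm:z_mrepresentation} to the \emph{initial datum} $\mb{Q_0}$ (not to $\mb Q(t)$) to write row $m$ restricted to columns $1,\dots,\sigma_m$ as a fixed linear combination of rows $1,\dots,m-1$ plus $(0,\dots,0,z_m)$, feeds this into the representation formula \eqref{eq:reprformula} to obtain
\[
q_{m,\sigma_m(\mb{Q_0})}(t)=\sum_{j=1}^{m-1}e^{(\lambda_m-\lambda_j)t}c_{m,j}\,q_{j,\sigma_m(\mb{Q_0})}(t)+\frac{e^{\lambda_m t}z_m}{f_{\sigma_m(\mb{Q_0})}(t)},
\]
and then invokes the global convergence Theorem~\ref{thm:convergence} (hence the Lyapunov structure) to know a~priori that $q_{m,\sigma_m}(t)$ tends to one of $\{-1,0,1\}$; the value $0$ is excluded by a growth argument on $e^{\lambda_m t}/f_{\sigma_m}(t)$, and the sign is read off directly from the constant $z_m$. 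By contrast you bypass Theorem~\ref{thm:convergence} entirely: your Grassmannian argument for $e^{\mb At}W_j$ (using a reduced column echelon basis of $W_j$ and Lemma~\ref{lmm:rank} to identify the pivot set $I_j=\{i:\sigma_i(\mb{Q_0})\le j\}$) establishes convergence of each column to $\pm\mb e_{m}$ on its own, and you then apply Lemma~\ref{lmm:z_mrepresentation} to $\mb Q(t)$, fixing the sign by continuity and non-vanishing of $z_m(t)$, which your rank-invariance paragraph has already secured. What your approach buys is independence from the energy structure---it works for any flow whose solution is the QR factor of $e^{\mb At}\mb{Q_0}$---and it yields Corollary~\ref{cor:stablerank} directly rather than as a consequence of the theorem; what the paper's approach buys is that the sign falls out immediately from the initial data without the auxiliary continuity argument, and the argument stays at the level of scalar identities rather than subspace convergence.
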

\begin{proof}
	We prove by induction. For $m=1$, we know that $q_{1,j}(0)=0$ for all $1\leq j<\sigma_1(\mb{Q_0})$ while $q_{1,\sigma_1(\mb{Q_0})}\neq 0$. By Lemma \ref{lmm:representation}, we have
	\begin{align}
		q_{1,\sigma_1(\mb{Q_0})}(t) =\dfrac{ e^{\lambda_1t}q_{1,\sigma_1(\mb{Q_0})}(0)}{f_{\sigma_1(\mb{Q_0})}(t)}.
	\end{align}
	Here $f$ is defined in \eqref{def:f}.
	By Theorem \ref{thm:convergence}, we know that $q_{1,\sigma_1(\mb{Q_0})}(t)$ converges to one of $1,\ -1$ and $0$. Because $\mb{q_{\sigma_1(\mb{Q_0})}}\cdot \mb{Aq_{\sigma_1(\mb{Q_0})}}\leq \lambda_1$, so 
	\begin{align}
		\dfrac{ e^{\lambda_1t}}{f_{\sigma_1(\mb{Q_0})}(t)} = e^{\int_0^t\lambda_1-\mb{q_{\sigma_1(\mb{Q_0})}}(s)\cdot \mb{Aq_{\sigma_1(\mb{Q_0})}}(s)\dd s}\geq e^{0}=1.
	\end{align}
	Thus $q_{1,\sigma_1(\mb{Q_0})}(t)$ does not converge to 0, and it converges to either $1$ or $-1$ which depends on the sign of the initial value $q_{1,\sigma_1(\mb{Q_0})}$. This implies
	\begin{align}
		\mb{q_{\sigma_{1}(\mb{Q_0})}}\to \mathrm{sgn}(q_{1,\sigma_1(\mb{Q_0})})\mb{e_1}
	\end{align}
	as $t\to\infty$. Thus the claim holds for $m=1$.
	
	Now suppose that \eqref{eq:stablemanifolds} holds for all $j=1,2,...,m-1$, we prove it for $j=m$. By \eqref{eq:q_m} in Lemma \ref{lmm:z_mrepresentation} and  \eqref{eq:reprformula} in Lemma \ref{lmm:representation}, we have
	\begin{align} \label{eq:q_mconv}
	\nonumber
		q_{m,\sigma_m(\mb{Q_0})}(t) &= e^{\lambda_m t}\sum_{l=1}^{\sigma_m(\mb{Q_0})}q_{m,l}(0)g_{l,\sigma_m(\mb{Q_0})}(t)\\
	\nonumber
		&= e^{\lambda_m t}\mb{Q_0}[m;\ 1,2,3,...,\sigma_m(\mb{Q_0})]\mb{G}(t)[1,2,...,\sigma_m(\mb{Q_0});\ \sigma_m(\mb{Q_0})]\\
	\nonumber
		&= e^{\lambda_m t}\mb{c_m}\mb{Q_0}[1,2,...,m-1;\ 1,2,3,...,\sigma_m({\mb{Q_0}})]\mb{G}(t)[1,2,...,\sigma_m(\mb{Q_0});\ \sigma_m(\mb{Q_0})]\\
	\nonumber
		&+\dfrac{e^{\lambda_m t}z_m}{f_{\sigma_m(\mb{Q_0})}(t)}\\
		&=\sum_{j=1}^{m-1}e^{(\lambda_m-\lambda_j)t}c_{m,j}q_{j,\sigma_m(\mb{Q_0})}(t)+\dfrac{e^{\lambda_m t}z_m}{f_{\sigma_m(\mb{Q_0})}(t)}.
	\end{align}
	Here $\mb{c_m}$ and $z_m$ are defined in Lemma \ref{lmm:z_mrepresentation}. By the hypothesis of induction, we know that
	$\mb{q_{\sigma_m(\mb{Q_0})}}$ does not converge to $\pm\mb{e_1},\ \pm\mb{e_1},\ ,...\pm\mb{e_{m-1}}$, so
	\begin{align}
	q_{j,\sigma_m(\mb{Q_0})}(t)\to 0\ \mathrm{as}\ t\to\infty.
	\end{align}
	Thus 
	\begin{align}
		\sum_{j=1}^{m-1}e^{(\lambda_m-\lambda_j)t}c_{m,j}q_{j,\sigma_m(\mb{Q_0})}(t)\to 0\ \mathrm{as}\ t\to\infty.
	\end{align}
	Again, by Theorem \ref{thm:convergence}, $q_{m,\sigma_m(\mb{Q_0})}$ converges to one of $1, -1$ and 0. Now we prove that it converges to either $1$ or $-1$. Otherwise $\mb{q_{\sigma_m(\mb{Q_0})}}$ can only converge to $\pm\mb{e_p}, p \geq m+1$, hence $\mb{q_{\sigma_m(\mb{Q_0})}}\cdot \mb{A\mb{q_{\sigma_m(\mb{Q_0})}}}\to \lambda_{p}<\lambda_m$, which results in 
	\begin{align*}
		\dfrac{e^{\lambda_m t}}{f_{\sigma_m(\mb{Q_0})}(t)}=e^{\int_0^t \lambda_m-\mb{q_{\sigma_m(\mb{Q_0})}}(s)\cdot \mb{A\mb{q_{\sigma_m(\mb{Q_0})}}}(s)\dd t}\to\infty\ \mathrm{as}\ t\to\infty,
	\end{align*}
	a contradiction. Thus $\dfrac{e^{\lambda_m t}z_m}{f_{\sigma_m(\mb{Q_0})}(t)}$ converges to either $1$ or $-1$ and 
	\begin{align}
		q_{m,\sigma_m(\mb{Q_0})}(t)=\dfrac{e^{\lambda_m t}z_m}{f_{\sigma_m(\mb{Q_0})}(t)}\to \mathrm{sgn}(z_m).
	\end{align}
	Thus \eqref{eq:stablemanifolds} holds for $m$. By induction, it holds for all $m=1,2,...,n$.
\end{proof}
As a direct corollary, we can see that $\sigma_m(\mb Q(t)), m=1,2,...,n$ does not change for all $t>0$.
\begin{corollary}\label{cor:stablerank}
	Suppose that $\mb{Q}(t)$ is a solution of \eqref{eq:finvODE}. Then for all $m=1,2,...,n$, we have
	\begin{align*}
		\sigma_m(\mb{Q}(t))=\sigma_m(\mb{Q}(0)), t>0.
	\end{align*}
\end{corollary}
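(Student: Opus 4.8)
The plan is to leverage Theorem \ref{thm:stablemanifold} together with the time-translation invariance of the ODE \eqref{eq:finvODE}. The key observation is that $\sigma_m(\mb{Q})$ is, by construction, precisely the index that records which column of $\mb{Q}$ gets aligned to $\mb{e_m}$ under the flow; so if the flow starting from $\mb{Q}(t_0)$ were to produce a different permutation than the flow starting from $\mb{Q}(0)$, we would get two different limits of the same trajectory, a contradiction.

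More concretely, fix $t_0>0$ and consider the trajectory $\mb{R}(s):=\mb{Q}(t_0+s)$, $s\geq 0$. By uniqueness of solutions to \eqref{eq:finvODE}, $\mb{R}(s)$ is itself a solution of \eqref{eq:finvODE} with initial datum $\mb{R}(0)=\mb{Q}(t_0)\in O(n)$ (here I use Lemma \ref{lmm:orthogonality} to know $\mb{Q}(t_0)\in O(n)$, so $\mb{Q}(t_0)$ is invertible and $\sigma_m(\mb{Q}(t_0))$ is well-defined). Applying Theorem \ref{thm:stablemanifold} to $\mb{R}$ gives
\begin{align*}
	\mb{r}_{\sigma_m(\mb{Q}(t_0))}(s)\to \mathrm{sgn}(z_m(\mb{Q}(t_0)))\,\mb{e_m},\quad m=1,2,\dots,n,
\end{align*}
as $s\to\infty$, where $\mb{r}_j$ denotes the $j$-th column of $\mb{R}$. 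On the other hand, applying Theorem \ref{thm:stablemanifold} to $\mb{Q}$ itself gives $\mb{q}_{\sigma_m(\mb{Q}(0))}(t)\to \mathrm{sgn}(z_m(\mb{Q}(0)))\,\mb{e_m}$ as $t\to\infty$. Since $\mb{r}_j(s)=\mb{q}_j(t_0+s)$, the two statements describe the limit of the \emph{same} trajectory $\mb{Q}(t)$ as $t\to\infty$: in the first, the column indexed $\sigma_m(\mb{Q}(t_0))$ tends to $\pm\mb{e_m}$; in the second, the column indexed $\sigma_m(\mb{Q}(0))$ tends to $\pm\mb{e_m}$.

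To conclude I would argue that the limiting configuration $\mb{Q}^*\in E$ (guaranteed by Theorem \ref{thm:convergence}) uniquely determines, for each $m$, the index $j$ with $\mb{q}_j(t)\to\pm\mb{e_m}$: indeed $\mb{Q}^*=\mathrm{diag}\{\eps_1,\dots,\eps_n\}\mb{P}$ for a permutation matrix $\mb{P}$, and the column of $\mb{Q}^*$ equal to $\pm\mb{e_m}$ is exactly the one at position $\mb{P}^{-1}(m)$. Since only one column of $\mb{Q}^*$ can equal $\pm\mb{e_m}$, and $\sigma_m(\mb{Q}(t_0))$ and $\sigma_m(\mb{Q}(0))$ both name that column, we get $\sigma_m(\mb{Q}(t_0))=\sigma_m(\mb{Q}(0))$ for every $m$. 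As $t_0>0$ was arbitrary, this proves the claim. The only mildly delicate point is making sure $\sigma_m(\mb{Q}(t_0))$ is well-defined, i.e.\ that $\mb{Q}(t_0)$ is invertible and the recursive determinants in \eqref{def:sigma} are eventually nonzero; this follows from $\mb{Q}(t_0)\in O(n)$ by Lemma \ref{lmm:orthogonality}, and from the fact that for an invertible matrix the recursion in \eqref{def:sigma} always terminates with a genuine permutation (used implicitly already in Lemma \ref{lmm:z_mrepresentation}). I do not expect a serious obstacle here; the corollary is essentially a bookkeeping consequence of Theorem \ref{thm:stablemanifold} plus uniqueness of ODE solutions.
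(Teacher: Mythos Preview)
Your proposal is correct and follows essentially the same approach as the paper: both arguments shift time by $t_0$, apply Theorem \ref{thm:stablemanifold} to the shifted and unshifted trajectories, and use uniqueness of the limit in $E$ to conclude that the two permutations $\sigma_m(\mb{Q}(t_0))$ and $\sigma_m(\mb{Q}(0))$ coincide. Your write-up is in fact slightly more careful than the paper's in justifying why only one column can converge to $\pm\mb{e_m}$ and why $\sigma_m(\mb{Q}(t_0))$ is well-defined.
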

\begin{proof}
	For arbitrary $t_0>0$, consider  $\mb{R}(t):=\mb{Q}(t+t_0), t>0$. Then $\mb{R}(t),t>0$ is the unique solution to \eqref{eq:finvODE} with initial value $\mb{Q}(t_0)$. Then by Theorem \ref{thm:stablemanifold}, for all $m=1,2,...,n$, we have
	\begin{align*}
		\mb{r}_{\sigma_m(\mb{R}(0))}(t)\to\mathrm{sgn}(z_m')\mb{e_m}\ \mathrm{as}\ t\to\infty.
	\end{align*}
	Remember that 
	\begin{align*}
		\mb{q}_{\sigma_m(\mb{Q}(0))}(t)\to\mathrm{sgn}(z_m)\mb{e_m}\ \mathrm{as}\ t\to\infty.
	\end{align*}
	Moreover, $\mb{R}(t)=\mb{Q}(t+t_0)$, so 
	\begin{align*}
		\lim\limits_{t\to\infty}\mb{R}(t)=\lim\limits_{t\to\infty}\mb{Q}(t).
	\end{align*}
	Thus 
	\begin{align*}
		\lim\limits_{t\to\infty}\mb{r}_{\sigma_m(\mb{R}(0))}(t)=	\lim\limits_{t\to\infty}\mb{q}_{\sigma_m(\mb{Q}(0))}(t).
	\end{align*}
	So
	\begin{align*}
		\sigma_m(\mb{R}(0))=\sigma_m(\mb{Q}(0)).
	\end{align*}
	By orthogonality of $\mb{Q}$ and $\mb{R}(t)=\mb{Q}(t+t_0)$, we know 
	\begin{align*}
		\sigma_m(\mb{Q}(t_0))=\sigma_m(\mb{R}(0))=\sigma_m(\mb{Q}(0)).
	\end{align*} 
\end{proof}
\subsubsection{An example} To illustrate Theorem \ref{thm:stablemanifold}, we discuss an example here. Consider 
\begin{align*}
	\mb{Q_1}=
	\begin{pmatrix}
	0 & \dfrac{\sqrt{2}}{2} & -\dfrac{\sqrt{3}}{3} & \dfrac{\sqrt{6}}{6}\\
	0 & \dfrac{\sqrt{2}}{2} & \dfrac{\sqrt{3}}{3} & -\dfrac{\sqrt{6}}{6}\\
	-\dfrac{\sqrt{2}}{2} & 0 & \dfrac{\sqrt{6}}{6} & \dfrac{\sqrt{3}}{3}\\
	\dfrac{\sqrt{2}}{2} & 0 & \dfrac{\sqrt{6}}{6} & \dfrac{\sqrt{3}}{3}
	\end{pmatrix}.
\end{align*}
Suppose that $\mb{Q}(t), t\geq 0$ solves \eqref{eq:finvODE} with initial value $\mb{Q_1}$. According \eqref{def:sigma}, we have
\begin{align*}
\sigma_1(\mb{Q_1})=2,\ \sigma_2(\mb{Q_1})=3,\ \sigma_3(\mb{Q_1})=1,\ \sigma_4(\mb{Q_1})=4.
\end{align*}
by Theorem \ref{thm:stablemanifold}, we have
\begin{align*}
	\mb{q_1}(t)&\to \mathrm{sgn}(z_3)\mb{e_3},\ \mb{q_2}(t)\to\mathrm{sgn}(z_1)\mb{e_1},\\
	\mb{q_3}(t)&\to \mathrm{sgn}(z_2)\mb{e_2},\ \mb{q_4}(t)\to\mathrm{sgn}(z_4)\mb{e_4}.
\end{align*}
A direction computation according to \eqref{def:z_m} yields $z_1=\sqrt{2}/2, z_2=2\sqrt{3}/3, z_3=-\sqrt{2}/2, z_4=2\sqrt{3}$. Thus as $t\to\infty$, 
\begin{align*}
\mb{q_1}(t)\to -\mb{e_3},\ \mb{q_2}(t)\to\mb{e_1},\ \mb{q_3}(t)\to \mb{e_2},\ \mb{q_4}(t)\to\mb{e_4}.
\end{align*}
\subsection{Convergence rate to the stable equilibria}
Applying the technique of linearization, we can prove that an equilibrium $\mb{Q}^*\in E$ (defined in \eqref{def:Rmanifold}) is stable if and only if
\begin{align}\label{eq:stable}
	\mb{Q}^*=\mathrm{diag}\{\eps_1,\eps_2,\eps_3,...,\eps_n\},\ \eps_i = \pm 1,\ i=1,2,...,n.
\end{align}
See Section \ref{subsec:linearization} for details. By Theorem \ref{thm:stablemanifold}, the solution of \eqref{eq:finvODE} with initial value $\mb{Q_0}$ converges to a stable equilibrium if and only if 
\begin{align}
	\sigma_k(\mb{Q_0})=k,\ k=1,2,...,n,
\end{align}
or equivalently, every leading principle submatrix is invertible, i.e.
\begin{align} \label{eq:stablecondition}
	\mathrm{det}(\mb{Q_0}[1,2,...,k;\ 1,2,...,k])\neq 0,\ k=1,2,...,n.
\end{align}

Now we proceed to prove the exponential convergence to a stable equilibrium. The main idea of this proof is similar to the one of power method: utilizing $(\lambda_i/\lambda_1)^n\to 0$ (here $e^{(\lambda_i-\lambda_1)t}\to 0$), we derive convergence rate. 
 
\begin{theorem} \label{thm:expconv}
	(exponential convergence) Suppose that $\mb{Q_0}$ is in the stable manifold of a stable equilibrium, i.e., \eqref{eq:stablecondition} holds. Let $\mb{Q}(t), t\geq 0$ be the solution of \eqref{eq:finvODE} with initial data $\mb{Q_0}$. Define
	\begin{equation}\label{eq:rate}
		\begin{aligned} 
		\nu_k &:= \min\{\lambda_1-\lambda_2,\ \lambda_2-\lambda_3,\ ...,\ \lambda_{k}-\lambda_{k+1}\}>0,\ k=1,2,...,n-1,\\
		\nu_n &:=\nu_{n-1}.
		\end{aligned}
	\end{equation}
	Then there exists a constant $C>0$ such that
	\begin{align} \label{eq:expconv}
		|q_{i,j}^2(t)-\delta_{i,j}| \leq Ce^{-2\nu_{i\wedge j}t},\ i,j=1,2,...,n.
	\end{align}
	Here $\delta_{i,j},\ i,j=1,2,...,n$ are Kronecker functions.
\end{theorem}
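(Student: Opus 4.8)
The plan is to exploit the explicit representation formula \eqref{eq:representation}, $\mb{Q}(t) = e^{\mb{A}t}\mb{Q_0}\mb{G}(t)$ with $\mb{G}(t)$ upper-triangular and positive on the diagonal, together with the semi-decoupling structure already used in the proof of Theorem \ref{thm:stablemanifold}. Since \eqref{eq:stablecondition} holds, we have $\sigma_k(\mb{Q_0})=k$ for all $k$, so Theorem \ref{thm:stablemanifold} gives $\mb{q_k}(t)\to\pm\mb{e_k}$, i.e. $q_{i,j}(t)\to\delta_{i,j}$ up to signs, and $q_{i,j}^2(t)\to\delta_{i,j}$. The task is to upgrade this to the exponential rate $e^{-2\nu_{i\wedge j}t}$. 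I would proceed by induction on the column index $m$, proving simultaneously that (a) $|q_{m,m}^2(t)-1|\le Ce^{-2\nu_m t}$ and (b) $|q_{j,m}^2(t)|\le Ce^{-2\nu_{j\wedge m}t}$ for $j\ne m$, using the induction hypothesis on columns $1,\dots,m-1$. The semi-decoupling is crucial: the first $m$ columns of $\mb{Q}(t)$ depend only on the first $m$ columns of $\mb{Q_0}$ and the $m\times m$ leading block $\mb{A}[1{:}m;1{:}m]$, so one may effectively reduce to the $m=n$ case of a smaller system when closing the induction.

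The base case $m=1$ should come directly from the first component's scalar dynamics. From Lemma \ref{lmm:representation}, $q_{1,1}(t) = e^{\lambda_1 t} q_{1,1}(0)/f_1(t)$ and more generally $q_{i,1}(t) = e^{\lambda_i t} q_{i,1}(0)/f_1(t)$ (the first column involves only the first column of $\mb{G}$, whose single nonzero entry is $1/f_1(t)$). Then for $i\ge 2$,
\begin{align*}
\frac{q_{i,1}^2(t)}{q_{1,1}^2(t)} = e^{2(\lambda_i-\lambda_1)t}\frac{q_{i,1}^2(0)}{q_{1,1}^2(0)} \le C e^{-2\nu_1 t},
\end{align*}
since $q_{1,1}(0)\ne 0$ by \eqref{eq:stablecondition} and $\lambda_1-\lambda_i\ge \nu_1$. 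Using $\sum_i q_{i,1}^2(t)=1$ (orthonormality of columns) then forces $q_{1,1}^2(t)\ge 1/(1+Ce^{-2\nu_1 t})$, hence $|q_{1,1}^2(t)-1|\le Ce^{-2\nu_1 t}$ and $q_{i,1}^2(t)\le Ce^{-2\nu_1 t}$ for $i\ge 2$. This establishes \eqref{eq:expconv} for $i\wedge j=1$.

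For the inductive step at column $m$, I would use the formula \eqref{eq:q_mconv} derived in the proof of Theorem \ref{thm:stablemanifold}, namely
\begin{align*}
q_{m,m}(t) = \sum_{j=1}^{m-1} e^{(\lambda_m-\lambda_j)t} c_{m,j}\, q_{j,m}(t) + \frac{e^{\lambda_m t} z_m}{f_m(t)},
\end{align*}
together with the analogous identities for $q_{i,m}(t)$ with $i>m$ (where the "diagonal" term $e^{\lambda_i t}(\cdots)/f_m(t)$ is multiplied by an extra factor decaying like $e^{(\lambda_i-\lambda_m)t}$). The rows $j<m$ are controlled by the induction hypothesis applied to the first $m-1$ columns, which gives $q_{j,m}^2(t)\le 1 - \sum_{\ell\le m-1}q_{j,\ell}^2(t)$-type bounds; more directly, since columns are orthonormal and rows are orthonormal, once columns $1,\dots,m-1$ are $e^{-2\nu_{m-1}t}$-close to $\mb{e_1},\dots,\mb{e_{m-1}}$, the $j$-th entry of column $m$ for $j\le m-1$ is forced to be $O(e^{-\nu_{m-1}t})$. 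One then bounds the middle sum above by $Ce^{-\nu_m t}$, notes (as in the proof of Theorem \ref{thm:stablemanifold}) that $e^{\lambda_m t}/f_m(t)$ is bounded because $\mb{q_m}(t)\cdot\mb{A}\mb{q_m}(t)\to\lambda_m$ and in fact $\lambda_m - \mb{q_m}(t)\cdot\mb{A}\mb{q_m}(t)$ is $O(e^{-2\nu_{m-1}t})$ by the column-wise decay just obtained, and finally applies $\sum_i q_{i,m}^2(t)=1$ again to pin down $q_{m,m}^2(t)$ and $q_{i,m}^2(t)$ for $i>m$ at rate $e^{-2\nu_m t}$ (with $\nu_n:=\nu_{n-1}$ absorbing the last column, which has no $\lambda_n-\lambda_{n+1}$ gap to appeal to but is fully determined by orthogonality once columns $1,\dots,n-1$ converge).

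The main obstacle will be the bookkeeping of the rate in the mixed term: one must verify that $\int_0^t \big(\lambda_m - \mb{q_m}(s)\cdot\mb{A}\mb{q_m}(s)\big)\,\dd s$ stays bounded (so that $e^{\lambda_m t}/f_m(t)$ neither blows up nor decays), and moreover that $e^{2\lambda_m t}/f_m^2(t) = z_m^{-2}q_{m,m}^2(t) + o(\cdot)$ converges to $z_m^{-2}$ at the claimed exponential rate — this is a mild self-referential bootstrap that closes once the row-$j<m$ and row-$i>m$ estimates are in hand. A clean way to organize this is to control, by induction, the quantity $\lambda_m - \mb{q_m}(t)\cdot\mb{A}\mb{q_m}(t) = \sum_{i\ne m}(\lambda_m-\lambda_i)q_{i,m}^2(t)$ directly: the $i<m$ part is $O(e^{-2\nu_{m-1}t})\le O(e^{-2\nu_m t})$ by hypothesis, and the $i>m$ part is handled along with $q_{i,m}^2(t)$ in the same step via the orthonormality constraint. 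Everything else is the elementary ODE/linear-algebra manipulation already rehearsed in Section \ref{sec:ode}.
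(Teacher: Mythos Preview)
Your proposal is correct and follows essentially the same route as the paper: induction on the column index using the representation formula from Lemma~\ref{lmm:representation} together with column/row orthonormality of $\mb{Q}(t)$. The base case $m=1$ is identical to the paper's.

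For the inductive step there is one difference worth noting. You plan to use \eqref{eq:q_mconv} and its analogues, which carry the residual term $e^{\lambda_m t}z_m/f_m(t)$ (or $e^{\lambda_i t}z'_i/f_m(t)$ for $i>m$), and you flag the control of $e^{\lambda_m t}/f_m(t)$ as ``the main obstacle'' requiring a bootstrap. The paper sidesteps this entirely: for $i>k$ it expands the row $\mb{Q_0}[i;\,1{:}k]$ in the \emph{full} basis of rows $1,\dots,k$ (not $1,\dots,k-1$), which is possible because the $k\times k$ leading block is invertible by \eqref{eq:stablecondition}. This yields
\[
q_{i,k}(t)=\sum_{j=1}^{k} c_{i,j}\,e^{(\lambda_i-\lambda_j)t}\,q_{j,k}(t),
\]
with no residual and hence no $f_k(t)$ at all; the right-hand side is then bounded directly by $Ce^{(\lambda_i-\lambda_k)t}|q_{k,k}(t)|+Ce^{-\nu_{k-1}t}$ via the induction hypothesis, and summing over $i>k$ plus orthonormality closes the step. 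Your variant also works---the bound $e^{\lambda_m t}/f_m(t)\le C$ follows immediately from \eqref{eq:q_mconv}, $|q_{m,m}(t)|\le 1$, and the inductive estimate on $q_{j,m}$ for $j<m$, so there is in fact no circularity---but the paper's choice of basis makes the obstacle you anticipate disappear.
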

\begin{proof}
	In the following proof, $C$ just represents for a general constant which needs not to be invariant through the whole proof. 
	
	We will prove by induction. As we pointed out in introduction, alignment of $\mb{q_1}$ is the fastest, so we consider it first.	
	
	We first prove \eqref{eq:expconv} for $q_{k,1},q_{1,k}\ k=1,2,...,n$. By the computation in Lemma \ref{lmm:representation}, we have 
	\begin{align*}
	\mb{q_1}(t)=\dfrac{e^{\mb{A}t}\mb{q_{1,0}}}{\|e^{\mb{A}t}\mb{q_{1,0}}\|}.
	\end{align*} 
	Remember that $\mb{Q_0}$ satisfies \eqref{eq:stablecondition}, so $q_{1,1}(0)\neq 0$ and
	\begin{align*}
		\dfrac{q_{k,1}(t)}{q_{1,1}(t)}=\dfrac{q_{k,1}(0)}{q_{1,1}(0)}e^{(\lambda_k-\lambda_1)t},\ k=1,2,...,n.
	\end{align*}
	So $|q_{k,1}(t)|\leq Ce^{(\lambda_k-\lambda_1)t}|q_{1,1}(t)|$. Summing up $k$ from $2$ to $n$ yields
	\begin{align*}
		1-|q_{1,1}(t)|^2 & = \sum_{k=2}^nq_{k,1}^2(t) = C\sum_{k=2}^ne^{2(\lambda_k-\lambda_1)t}|q_{1,1}(t)|^2 \leq Ce^{2(\lambda_2-\lambda_1)t}|q_{1,1}(t)|^2.
	\end{align*}
	Thus
	\begin{align*}
		1-|q_{1,1}^2(t)|\leq \dfrac{Ce^{2(\lambda_2-\lambda_1)t}}{1+Ce^{2(\lambda_2-\lambda_1)t}} \leq Ce^{2(\lambda_2-\lambda_1)t}.
	\end{align*}
	Thus by orthogonality, we have
	\begin{equation*}
		\begin{aligned}
		|q_{1,k}(t)|^2\leq 1-|q_{1,1}(t)|^2 \leq Ce^{2(\lambda_2-\lambda_1)t},\ k=2,...,n,\\
		|q_{k,1}(t)|^2\leq 1-|q_{1,1}(t)|^2 \leq Ce^{2(\lambda_2-\lambda_1)t},\ k=2,...,n.
		\end{aligned}
	\end{equation*}
	Thus \eqref{eq:expconv} holds for $i,j$ such that $i\wedge j=1$.
	
	After alignment of first $k-1$ components, we consider that of $\mb{q_k}$.
	Suppose that \eqref{eq:expconv} holds for $i,j$ such that $i\wedge j=1,2,...,k-1, k\geq 2$, we prove that \eqref{eq:expconv} also holds for $i,j$ such that $i\wedge j=k$. By \eqref{eq:reprformula}, we have for $m=1,2,...,n$
	\begin{align*}
		e^{-\lambda_mt}q_{m,k}(t) = \sum_{j=1}^ng_{j,k}(t)q_{m,j}(0)=\mb{Q_0}[m;\ 1,2,...,k]\mb{G}(t)[1,2,...,k;\ m].
	\end{align*}
	Here $\mb{G}(t)$ is defined in Lemma \ref{lmm:representation} (see \eqref{eq:representation}). Remember that $\mb{Q_0}[1,2,...,k;\ 1,2,...,k]$ is invertible, so there exist constants $c_{m,j}, j=1,2,...,k$ such that for all $k+1\leq m\leq n$, 
	\begin{align*}
		\mb{Q_0}[m;\ 1,2,...,k]=\sum_{j=1}^kc_{m,j}\mb{Q_0}[j;\ 1,2,...,k].
	\end{align*}
	Thus for $k+1\leq m \leq n$, 
	\begin{align*}
		e^{-\lambda_mt}q_{m,k}(t) &= \sum_{j=1}^k c_{m,j}\mb{Q_0}[j;\ 1,2,...,k]\mb{G}(t)[1,2,...,k;\ m]\\
		&= \sum_{j=1}^k c_{m,j}e^{-\lambda_j t}q_{j,k}(t),
	\end{align*}
	so by induction hypothesis (i.e., \eqref{eq:expconv} holds for $i\wedge j=k-1$)
	\begin{align*}
		|q_{m,k}(t)|\leq Ce^{(\lambda_m-\lambda_k)t}|q_{k,k}(t)|+Ce^{-\nu_{k-1}t}.
	\end{align*}
	Summing up from $m=k+1$ to $n$ yields
	\begin{align*}
		1-\sum_{j=1}^kq_{j,k}^2(t)-q_{k,k}^2(t) = \sum_{m=k+1}^n |q_{m,k}(t)|^2 \leq Ce^{2(\lambda_{k+1}-\lambda_k)t}|q_{k,k}(t)|^2+Ce^{-2\nu_{k-1}t}.
	\end{align*}
	Thus
	\begin{align*}
		1-q_{k,k}^2(t) \leq  \sum_{j=1}^kq_{j,k}^2(t)+Ce^{2(\lambda_{k+1}-\lambda_k)t}q_{k,k}^2(t)+Ce^{-2\nu_{k-1}t}\leq Ce^{2(\lambda_{k+1}-\lambda_k)t}q^2_{k,k}(t)+Ce^{-2\nu_{k-1}t},
	\end{align*}
	hence
	\begin{align*}
		1-q_{k,k}^2(t)\leq \dfrac{Ce^{2\nu_{k-1}t}+Ce^{2(\lambda_{k+1}-\lambda_k)t}}{1+e^{2(\lambda_{k+1}-\lambda_k)t}}\leq Ce^{-2\min\{\nu_{k-1},\lambda_{k}-\lambda_{k+1}\}t}=Ce^{-2\nu_kt}.
	\end{align*}
	So for any $j\geq k+1$, by orthogonality,
	\begin{align*}
		q_{j,k}^2(t)\leq 1-q^2_{k,k}(t)\leq Ce^{-2\nu_kt},\\
		q_{k,j}^2(t)\leq 1-q^2_{k,k}(t)\leq Ce^{-2\nu_kt}.
	\end{align*}
	Thus \eqref{eq:expconv} also holds for $i,j$ such that $i\wedge j=k+1$. So by induction, \eqref{eq:expconv} holds for $i,j=1,2,...,n$.
\end{proof}
\subsection{The case of $p<n$}\label{sec:p<n}
Now we discuss the case $p<n$. In this case, the ODE system is reduced to 
\begin{gather}\label{eq:pdODE}
\left\{
\begin{split}
\mb{\dot{q_1}}&=\mb{Aq_1}-\mb{(q_1\cdot Aq_1)q_1},\\
\mb{\dot{q_j}}&=\mb{Aq_j}-\mb{(q_j\cdot Aq_j)q_j}-2\sum_{i=1}^{j-1}\mb{(q_i\cdot Aq_j)q_i},\ j=2,\ 3,\ ...,\ p.\\
\mb{q_i}(0)&=\mb{q_{i,0}},\ i=1,\ 2,\ ...,\ p.
\end{split}\right.
\end{gather}
The initial value $\mb{Q_0}\in\R^{n\times p}$ satisfies $\mb{Q_0}^T\mb{Q_0}=\mb{I_p}$. Denote the solution of \eqref{eq:pdODE} with initial value $\mb{Q_0}$ as $\mb{Q}(t),t>0$. 

To derive the convergence property of $\mb{Q}(t)$, we just need to complete $\mb{Q_0}$, make it in $O(n)$. Now let $\mb{R_0}\in\R^{n\times(n-p)}$ satisfy that
\begin{align}
	\mb{Q_1}=[\mb{Q_0},\mb{R_0}]\in O(n).
\end{align}

Let $\mb{Q_1}(t)$ be the solution of \eqref{eq:finvODE} with initial value $\mb{Q_1}$. Notice that \eqref{eq:pdODE} is exactly the same as the first $p$ components of \eqref{eq:ndODE} and \eqref{eq:finvODE}, thus it is also semi-decoupling. Therefore, by uniqueness the solution,
\begin{align*}
	\mb{Q}(t)=\mb{Q_1}[1,2,...,n;1,2,...,p](t),\ t>0,
\end{align*}
i.e., the first $p$ columns of $\mb{Q_1}$ and $\mb{Q}$ are the same. So the same convergence properties in Theorem \ref{thm:stablemanifold} and Theorem \ref{thm:expconv} hold for $\mb{Q}$.

\section*{Acknowledgement}
Jian-Guo Liu was supported in part by the National Science Foundation (NSF) under award DMS-2106988.

\bibliographystyle{plain}
\bibliography{PCAODE}

\section{Appendix} \label{sec:appendix}
\subsection{Riemannian manifolds and the Stiefel manifold} \label{subsec:stiefel}
We denote the tangent space at $\mb{m}$ on manifold $\mathcal{M}$ as $T_{\mb{m}}\mathcal{M}$, the tangent vector field on $\mathcal{M}$ as $\Gamma(T\mathcal{M})$. The tangent bundle (the disjoint union of the tangent spaces) is denoted as $T\mathcal{M}$.
\begin{definition} (Riemannian manifolds)
	Suppose that $\mathcal{M}$ is a smooth manifold. A Riemannian manifold $(\mathcal{M},g)$ is a smooth mainfold equipped with an inner product $g_{\mb{m}}$ on $T_{\mb{m}}\mathcal{M}$ at each $\mb{m}\in\mathcal{M}$. Moreover, for any tangent vector field $\dot{\mb{x}}$ and $\dot{\mb{y}}$, the function 
	\begin{align}
	\langle\dot{\mb{x}}(\mb{m}),\dot{\mb{y}}(\mb{m})\rangle_{g_{\mb{m}}}:\ \mathcal{M}\to\R
	\end{align}
	is smooth.
\end{definition}

Given a Riemannian metric $g$ on $M$, the gradient of a smooth function $\mathcal{E}$ on $\mathcal{M}$ is defined as 
\begin{definition} \label{def:gradient}
	(the gradient on the Riemannian manifold) A tangent vector field $\nabla_g\mathcal{E}$ on $M$ is called the gradient of $\mathcal{E}$ w.r.t. the metric $g$ if for every tangent vector field $\dot{\mb{x}}$ on $M$,
	\begin{align} \label{eq:grad}
	\langle \mathcal{E}',\ \dot{\mb{x}}\rangle_F = \langle \nabla_g\mathcal{E},\ \dot{\mb{x}}\rangle_g.
	\end{align} 
	Here $\mathcal{E}'$ is the derivative of $\mathcal{E}$, which is a cotangent vector field.
\end{definition}

Now we consider the Stiefel manifold with the Euclidean metric $g_e$, under the global coordinate $\mb{Q}\in\R^{n\times n}$. We first introduce several important properties of $O(n)$. See \cite{helmke2012optimization} for more details.

\begin{lemma} \label{lmm:Stiefel}
	(the Stiefel manifold)
	The Stiefel manifold $O(n)$ is a smooth, compact manifold of dimension $n(n-1)/2$. The tangent space at $\mb{Q}$ is given by
	\begin{align}\label{eq:tangent}
		T_{\mb{Q}}O(n)=\{\mb{Q}\bm{\Omega}\ |\ \bm{\Omega}\in\R^{n\times n},\ \bm{\Omega}+\bm{\Omega}^T=\mb{0}\},
	\end{align}
while the normal space at $\mb{Q}$ is given by 
\begin{align}\label{eq:normal}
T_{\mb{Q}}O(n)^{\perp}=\{\mb{Q}\bm{\Omega}\ |\ \bm{\Omega}\in\R^{n\times n},\ \bm{\Omega}=\bm{\Omega}^T\}.
\end{align}
\end{lemma}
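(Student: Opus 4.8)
The plan is to realize $O(n)$ as a regular level set of a smooth map into the space of symmetric matrices; the manifold structure, the dimension formula, and the description \eqref{eq:tangent} of the tangent space then all follow simultaneously from the preimage theorem, while compactness and \eqref{eq:normal} reduce to elementary linear algebra.

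First I would introduce $\Phi:\R^{n\times n}\to\mathrm{Sym}(n)$, $\Phi(\mb{Q})=\mb{Q}^T\mb{Q}-\mb{I_n}$, where $\mathrm{Sym}(n)$ denotes the $\tfrac{n(n+1)}{2}$-dimensional space of symmetric matrices, so that $O(n)=\Phi^{-1}(\mb{0})$. Its differential at $\mb{Q}$ is $D\Phi_{\mb{Q}}(\mb{X})=\mb{X}^T\mb{Q}+\mb{Q}^T\mb{X}$, which indeed takes values in $\mathrm{Sym}(n)$. To check that $\mb{0}$ is a regular value, fix $\mb{Q}\in O(n)$ and an arbitrary $\mb{S}\in\mathrm{Sym}(n)$; taking $\mb{X}=\tfrac{1}{2}\mb{Q}\mb{S}$ and using $\mb{Q}^T\mb{Q}=\mb{I_n}$ and $\mb{S}=\mb{S}^T$ gives $D\Phi_{\mb{Q}}(\mb{X})=\tfrac{1}{2}(\mb{S}^T+\mb{S})=\mb{S}$, so $D\Phi_{\mb{Q}}$ is surjective onto $\mathrm{Sym}(n)$. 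By the regular value (preimage) theorem, $O(n)$ is a smooth embedded submanifold of $\R^{n\times n}$ of dimension $n^2-\tfrac{n(n+1)}{2}=\tfrac{n(n-1)}{2}$, and $T_{\mb{Q}}O(n)=\ker D\Phi_{\mb{Q}}=\{\mb{X}:\ \mb{X}^T\mb{Q}+\mb{Q}^T\mb{X}=\mb{0}\}$.

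Next I would put this kernel in the stated form. Since $\mb{Q}$ is invertible, every $\mb{X}$ is uniquely $\mb{X}=\mb{Q}\bm{\Omega}$ with $\bm{\Omega}=\mb{Q}^T\mb{X}$, and the condition $\mb{X}^T\mb{Q}+\mb{Q}^T\mb{X}=\mb{0}$ becomes $\bm{\Omega}^T\mb{Q}^T\mb{Q}+\mb{Q}^T\mb{Q}\bm{\Omega}=\bm{\Omega}^T+\bm{\Omega}=\mb{0}$, i.e. $\bm{\Omega}$ is skew-symmetric, which is exactly \eqref{eq:tangent}. For compactness, $O(n)$ is closed as $\Phi^{-1}(\mb{0})$ with $\Phi$ continuous, and bounded since $\|\mb{Q}\|_F^2=\mathrm{tr}(\mb{Q}^T\mb{Q})=\mathrm{tr}(\mb{I_n})=n$, so Heine–Borel applies. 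Finally, for \eqref{eq:normal}, I would note that the skew matrices (dimension $\tfrac{n(n-1)}{2}$) and the symmetric matrices (dimension $\tfrac{n(n+1)}{2}$) span $\R^{n\times n}$, hence so do $\{\mb{Q}\bm{\Omega}:\bm{\Omega}^T=-\bm{\Omega}\}$ and $\{\mb{Q}\bm{\Gamma}:\bm{\Gamma}^T=\bm{\Gamma}\}$; it then suffices to check orthogonality in the Frobenius inner product, which follows from $\langle\mb{Q}\bm{\Omega},\mb{Q}\bm{\Gamma}\rangle_F=\mathrm{tr}(\bm{\Omega}^T\mb{Q}^T\mb{Q}\bm{\Gamma})=\mathrm{tr}(\bm{\Omega}^T\bm{\Gamma})=0$ whenever $\bm{\Omega}$ is skew and $\bm{\Gamma}$ symmetric (the trace of a skew times a symmetric matrix vanishes, since $\mathrm{tr}(\bm{\Omega}\bm{\Gamma})=\mathrm{tr}((\bm{\Omega}\bm{\Gamma})^T)=-\mathrm{tr}(\bm{\Omega}\bm{\Gamma})$). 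A dimension count forces the inclusion $\{\mb{Q}\bm{\Gamma}:\bm{\Gamma}^T=\bm{\Gamma}\}\subseteq T_{\mb{Q}}O(n)^{\perp}$ to be an equality, giving \eqref{eq:normal}.

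Since the argument is the submersion theorem plus linear algebra, there is no serious obstacle; the only point needing a little care is choosing the codomain of $\Phi$ to be $\mathrm{Sym}(n)$ rather than all of $\R^{n\times n}$, because only with the correct codomain is $D\Phi_{\mb{Q}}$ surjective, and only then do the dimension count and the identification of $T_{\mb{Q}}O(n)$ with the skew matrices come out right.
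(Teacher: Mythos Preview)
Your argument is correct and complete: realizing $O(n)$ as the regular level set $\Phi^{-1}(\mb 0)$ of $\Phi(\mb Q)=\mb Q^T\mb Q-\mb{I_n}$ into $\mathrm{Sym}(n)$, checking surjectivity of $D\Phi_{\mb Q}$, and then reading off smoothness, dimension, and the tangent space is the standard route, and your computations for compactness and for the Frobenius-orthogonal complement are clean.

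The paper itself does not prove this lemma at all; it simply refers the reader to \cite{helmke2012optimization}. So your write-up is strictly more self-contained than the paper's treatment. The only minor remark is that your caveat about choosing the codomain to be $\mathrm{Sym}(n)$ rather than $\R^{n\times n}$ is well taken and worth keeping, since that is exactly where a careless version of this proof would go wrong.
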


See \cite{helmke2012optimization} for proof of Lemma \ref{lmm:Stiefel}. By Lemma \ref{lmm:Stiefel}, we can prove that for any $\mb{M}\in\R^{n\times n}$, the projection on the tangent spaces and the normal spaces are respectively:
\begin{align}
	\mathcal{P}_{T_{\mb{Q}}O(n)}\mb{M}:=\dfrac{1}{2}(\mb{M}-\mb{Q}\mb{M}^T\mb{Q}),\ \mathcal{P}_{T_{\mb{Q}}O(n)^{\perp}}\mb{M}:=\dfrac{1}{2}(\mb{M}+\mb{Q}\mb{M}^T\mb{Q}).
\end{align}

\begin{lemma} (Gradient on $(O(n),g_e)$) Suppose that $\varphi(\mb{Q}):O(n)\to\R$ is a restriction of a smooth function (still denoted as $\varphi:\R^{n\times n}\to\R$) on $O(n)$. Then gradient of $\varphi$ w.r.t. $g_e$ at point $\mb{Q}$ is given by
	\begin{align}
		\nabla_{g_e}\varphi:=\mathcal{P}_{T_{\mb{Q}}O(n)}(\nabla\varphi)=\dfrac{1}{2}(\nabla\varphi-\mb{Q}(\nabla\varphi)^T\mb{Q}),
	\end{align}  
	here $
	\nabla\varphi\in\R^{n\times n}$ is the gradient of $\varphi$ in $\R^{n\times n}$, i.e. $(\nabla\varphi)_{ij}=\dfrac{\p\varphi}{\p q_{ij}}$.
\end{lemma}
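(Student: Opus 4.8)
The plan is to unwind Definition~\ref{def:gradient} and thereby reduce the statement to the fact that the ambient Euclidean gradient $\nabla\varphi\in\R^{n\times n}$ differs from any tangent vector representing the same linear functional on $T_{\mb{Q}}O(n)$ by a normal vector, so that the Riemannian gradient must be the tangential projection of $\nabla\varphi$.

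First I would make precise what the two sides of the defining relation \eqref{eq:grad} are in this setting. Since $g_e$ is the Euclidean metric, $\langle\cdot,\cdot\rangle_{g_e}$ is simply the Frobenius inner product restricted to tangent spaces, so the right-hand side of \eqref{eq:grad} reads $\langle\nabla_{g_e}\varphi,\dot{\mb{x}}\rangle_F$, where a priori $\nabla_{g_e}\varphi\in T_{\mb{Q}}O(n)$. For the left-hand side I would use that $\varphi$ on $O(n)$ is the restriction of a smooth function on $\R^{n\times n}$: for any smooth curve $\mb{Q}(s)\subset O(n)$ with $\mb{Q}(0)=\mb{Q}$ and $\dot{\mb{Q}}(0)=\dot{\mb{x}}$, the chain rule gives $\frac{\dd}{\dd s}\varphi(\mb{Q}(s))\big|_{s=0}=\langle\nabla\varphi(\mb{Q}),\dot{\mb{x}}\rangle_F$; that is, the cotangent vector $\varphi'$ acts on tangent vectors exactly as $\langle\nabla\varphi,\cdot\rangle_F$, so $\langle\varphi',\dot{\mb{x}}\rangle_F=\langle\nabla\varphi,\dot{\mb{x}}\rangle_F$ for every tangent field $\dot{\mb{x}}$.

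Combining the two, $\langle\nabla\varphi-\nabla_{g_e}\varphi,\dot{\mb{x}}\rangle_F=0$ for all $\dot{\mb{x}}\in T_{\mb{Q}}O(n)$, hence $\nabla\varphi-\nabla_{g_e}\varphi\in T_{\mb{Q}}O(n)^{\perp}$. Since $\nabla_{g_e}\varphi\in T_{\mb{Q}}O(n)$ and $\R^{n\times n}=T_{\mb{Q}}O(n)\oplus T_{\mb{Q}}O(n)^{\perp}$ by Lemma~\ref{lmm:Stiefel}, uniqueness of the orthogonal decomposition forces $\nabla_{g_e}\varphi=\mathcal{P}_{T_{\mb{Q}}O(n)}(\nabla\varphi)$. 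Substituting $\mb{M}=\nabla\varphi$ into the projection formula $\mathcal{P}_{T_{\mb{Q}}O(n)}\mb{M}=\tfrac12(\mb{M}-\mb{Q}\mb{M}^T\mb{Q})$ recorded just above the lemma then gives the claimed expression $\tfrac12(\nabla\varphi-\mb{Q}(\nabla\varphi)^T\mb{Q})$. If a self-contained derivation of that projection formula is wanted, one checks that $\tfrac12(\mb{M}-\mb{Q}\mb{M}^T\mb{Q})=\mb{Q}\bm{\Omega}$ with $\bm{\Omega}=\tfrac12(\mb{Q}^T\mb{M}-\mb{M}^T\mb{Q})$ skew-symmetric (tangent, by \eqref{eq:tangent}), that $\tfrac12(\mb{M}+\mb{Q}\mb{M}^T\mb{Q})=\mb{Q}\mb{S}$ with $\mb{S}$ symmetric (normal, by \eqref{eq:normal}), that the two sum to $\mb{M}$, and that $\langle\mb{Q}\bm{\Omega},\mb{Q}\mb{S}\rangle_F=\mathrm{tr}(\bm{\Omega}^T\mb{S})=0$ since the trace of a product of a skew and a symmetric matrix vanishes.

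The argument involves no serious obstacle; the only points requiring care are the identification in the second step of the differential $\varphi'$ with the ambient gradient through the Frobenius pairing, and remembering that $\nabla_{g_e}\varphi$ is required to be tangent, which is exactly the constraint that singles out the tangential projection of $\nabla\varphi$ from among all ambient matrices inducing the same functional on $T_{\mb{Q}}O(n)$.
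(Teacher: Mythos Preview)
Your proof is correct and follows essentially the same approach as the paper: both reduce to the orthogonal splitting $\R^{n\times n}=T_{\mb{Q}}O(n)\oplus T_{\mb{Q}}O(n)^{\perp}$ and the vanishing of $\langle\mb{Q}\bm{\Omega},\mb{Q}\mb{S}\rangle_F$ for $\bm{\Omega}$ skew and $\mb{S}$ symmetric. The only cosmetic difference is that the paper verifies directly that the candidate $\mathcal{P}_{T_{\mb{Q}}O(n)}(\nabla\varphi)$ satisfies the defining relation \eqref{eq:grad}, whereas you argue the other way around---starting from \eqref{eq:grad} and deducing that $\nabla\varphi-\nabla_{g_e}\varphi$ must be normal---but this is the same computation read in reverse.
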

\begin{proof}
	We just need to prove that for any $\mb{Q}\in O(n)$ and any tangent vector at $\mb{Q}$, \eqref{eq:grad} holds. By Lemma \ref{lmm:Stiefel}, a tangent vector at $\mb{Q}$ can be represented as $\mb{Q}\bm{\Omega}$ where $\bm{\Omega}$ is skew-symmetric. Therefore, for any $\bm{\Omega}$ that is skew-symmetric, we have 
	\begin{align}
		\langle \nabla\varphi, \mb{Q}\bm{\Omega}\rangle_F = \langle \mathcal{P}_{T_{\mb{Q}}O(n)}(\nabla\varphi),\ \mb{Q}\bm{\Omega}\rangle_F + \langle \mathcal{P}_{T_{\mb{Q}}O(n)^{\perp}}(\nabla\varphi),\ \mb{Q}\bm{\Omega}\rangle_F.
	\end{align} 
	Notice that 
	\begin{align*}
		\langle \mathcal{P}_{T_{\mb{Q}}O(n)^{\perp}}(\nabla\varphi),\ \mb{Q}\bm{\Omega}\rangle_F &= \langle \mb{Q}(\mb{Q}^T\nabla\varphi+(\nabla\varphi)^T\mb{Q}),\ \mb{Q}\bm{\Omega}\rangle_F\\
		&= \langle\mb{Q}^T\nabla\varphi+(\nabla\varphi)^T\mb{Q},\ \bm{\Omega}\rangle_F\\
		&=0,
	\end{align*}
	since $\langle \mb{M},\ \mb{N}\rangle_F=0$ if $\mb{M}$ is symmetric while $\mb{N}$ is skew-symmetric. Therefore,
	\begin{align*}
		\langle \nabla\varphi, \mb{Q}\bm{\Omega}\rangle_F = \langle \mathcal{P}_{T_{\mb{Q}}O(n)}(\nabla\varphi),\ \mb{Q}\bm{\Omega}\rangle_F = \langle \mathcal{P}_{T_{\mb{Q}}O(n)}(\nabla\varphi),\ \mb{Q}\bm{\Omega}\rangle_{g_e}.
	\end{align*}
	So $\nabla_{g_e}\varphi=\mathcal{P}_{T_{\mb{Q}}O(n)}(\nabla\varphi)$.
\end{proof}
Under this global coordinate, the divergence on $(O(n), g_e)$ can also be explicitly computed. The $ij$ entry of $\nabla_{g_e}\varphi$ is given by
\begin{align}
	(\nabla_{g_e}\varphi)_{ij}= \dfrac{1}{2}\left(\dfrac{\p\varphi}{\p q_{ij}}-\sum_{k,l}q_{ik}q_{lj}\dfrac{\p\varphi}{\p q_{lk}}\right).
\end{align}
So given a tangent vector field $\mb{H}(\mb{Q})=(h_{ij}(\mb{Q}))$, the divergence of it is defined by
\begin{align}
	\nabla_{g_e}\cdot\mb{H}(\mb{Q}):=\sum_{i,j}(\nabla_{g_e}(h_{ij}(\mb{Q})))_{ij}.
\end{align}
The Laplace-Beltrami operator is then defined as:
\begin{align}
	\Delta_{g_e}\varphi:=\nabla_{g_e}\cdot\nabla_{g_e}\varphi.
\end{align}
Explicit expression of the Laplace-Beltrami operator is given by the following lemma:
\begin{lemma} \label{lmm:LBO}
	(Laplace-Beltrami operator) The Laplace-Beltrami operator on $(O(n),g_e)$ is given by
	\begin{align} \label{eq:LBO}
		\Delta_{g_e}\varphi=\dfrac{1}{2}\left(\sum_{i,j}\dfrac{\p^2\varphi}{\p{q_{ij}^2}}-(n-1)\sum_{i,j}q_{ij}\dfrac{\p\varphi}{\p q_{ij}}-\sum_{i,j,k,l}q_{ik}q_{lj}\dfrac{\p^2\varphi}{\p q_{ij}\p q_{lk}}\right).
	\end{align}
\end{lemma}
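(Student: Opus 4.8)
The plan is to compute $\Delta_{g_e}\varphi=\nabla_{g_e}\!\cdot\nabla_{g_e}\varphi$ directly from the two coordinate formulas preceding the lemma, and to simplify the resulting index sums using only the orthogonality relations $\mb{Q}\mb{Q}^T=\mb{Q}^T\mb{Q}=\mb{I_n}$. Set $h_{ij}:=(\nabla_{g_e}\varphi)_{ij}=\tfrac12\big(\p\varphi/\p q_{ij}-\sum_{k,l}q_{ik}q_{lj}\,\p\varphi/\p q_{lk}\big)$; then by the definition of the divergence,
\begin{align*}
\Delta_{g_e}\varphi=\sum_{i,j}(\nabla_{g_e}(h_{ij}))_{ij}=\frac12\sum_{i,j}\Big(\frac{\p h_{ij}}{\p q_{ij}}-\sum_{k,l}q_{ik}q_{lj}\,\frac{\p h_{ij}}{\p q_{lk}}\Big),
\end{align*}
so everything reduces to differentiating $h_{ij}$ once more.

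Next I would apply the product rule to $q_{ik}q_{lj}$ inside $h_{ij}$ to get, for a general pair $(a,b)$,
\begin{align*}
2\,\frac{\p h_{ij}}{\p q_{ab}}=\frac{\p^2\varphi}{\p q_{ab}\p q_{ij}}-\delta_{ai}\sum_{l}q_{lj}\frac{\p\varphi}{\p q_{lb}}-\delta_{bj}\sum_{k}q_{ik}\frac{\p\varphi}{\p q_{ak}}-\sum_{k,l}q_{ik}q_{lj}\frac{\p^2\varphi}{\p q_{ab}\p q_{lk}}.
\end{align*}
Specialising to $(a,b)=(i,j)$, where both Kronecker deltas become $1$, evaluates the first batch of terms; setting $(a,b)=(l,k)$, multiplying by $q_{ik}q_{lj}$ and summing evaluates the second. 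Assembling $4\Delta_{g_e}\varphi$ and sorting terms by type: the pure Laplacian $\sum_{i,j}\p^2\varphi/\p q_{ij}^2$ arises once from the first sum and once more from the last term of the second sum after the contractions $\sum_i q_{ik}q_{ik'}=\delta_{kk'}$, $\sum_j q_{lj}q_{l'j}=\delta_{ll'}$, giving total coefficient $2$; the mixed second-derivative term $\sum_{i,j,k,l}q_{ik}q_{lj}\,\p^2\varphi/(\p q_{ij}\p q_{lk})$ appears with coefficient $-1$ in each half, for a total of $-2$; the two $\delta$-terms of $\p h_{ij}/\p q_{ij}$ each contribute $-n\sum_{i,j}q_{ij}\p\varphi/\p q_{ij}$ (the free $i$- or $j$-index producing the factor $n$), while the matching $\delta$-terms of the second sum collapse via $\sum_j q_{ij}q_{lj}=\delta_{il}$ to $+\sum_{i,j}q_{ij}\p\varphi/\p q_{ij}$ each, for net coefficient $-(2n-2)$ on the first-order term. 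Dividing by $4$ yields exactly \eqref{eq:LBO}.

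The computation is short in structure, and the only real obstacle is keeping the four free indices and their contractions straight through the product rule, and recognising at each stage which contraction of $\mb{Q}$ against itself is a ``$\mb{Q}\mb{Q}^T$'' contraction (over the first matrix index) versus a ``$\mb{Q}^T\mb{Q}$'' contraction (over the second), so that the correct Kronecker delta is produced. It is worth noting that $\varphi$ is a priori only defined on $O(n)$, but the identity is to be read for an arbitrary smooth ambient extension; the orthogonality relations may be used freely inside the computation, and no input from intrinsic differential geometry is needed beyond the two coordinate formulas already established.
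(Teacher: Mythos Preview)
Your proposal is correct and follows essentially the same route as the paper: compute $\partial h_{ij}/\partial q_{ij}$ and $\partial h_{ij}/\partial q_{lk}$ via the product rule, substitute into the divergence formula, and contract using $\mb{Q}\mb{Q}^T=\mb{Q}^T\mb{Q}=\mb{I_n}$. You actually carry out the index contractions and coefficient bookkeeping more explicitly than the paper, which stops after displaying the two partial derivatives and simply says ``substituting the above formulas into the Laplacian operator, we can derive \eqref{eq:LBO}.''
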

\begin{proof}
	By the expression of the divergence and gradient, we have 
	\begin{align*}
	\Delta_{g_e}\varphi=\sum_{i,j}\left(\dfrac{\p h_{ij}}{\p q_{ij}}-\sum_{k,l}q_{ik}q_{lj}\dfrac{\p h_{ij}}{\p q_{lk}}\right),\ h_{ij}=\sum_{i,j}\left(\dfrac{\p\varphi}{\p q_{ij}}-\sum_{k,l}q_{ik}q_{lj}\dfrac{\p\varphi}{\p q_{lk}}\right).
	\end{align*}
	Thus 
	\begin{align*}
		\dfrac{\p h_{ij}}{\p q_{ij}} &= \dfrac{1}{2}\left(\dfrac{\p^2\varphi}{\p q_{ij}^2}-\sum_{k,l}\delta_{kl}q_{lj}\dfrac{\p\varphi}{\p q_{lk}}-\sum_{k,l}\delta_{il}q_{ik}\dfrac{\p\varphi}{\p q_{lk}}-\sum_{k,l}q_{ik}q_{lj}\dfrac{\p^2\varphi}{\p q_{ij}q_{lk}}\right)\\
		&= \dfrac{1}{2}\left(\dfrac{\p^2\varphi}{\p q_{ij}^2}-\sum_{l}q_{lj}\dfrac{\p\varphi}{\p q_{lj}}-\sum_{k}q_{ik}\dfrac{\p\varphi}{\p q_{ik}}-\sum_{k,l}q_{ik}q_{lj}\dfrac{\p^2\varphi}{\p q_{ij}q_{lk}}\right),
	\end{align*}
	and 
	\begin{align*}
		\dfrac{\p h_{ij}}{\p q_{lk}}=\dfrac{1}{2}\left(\dfrac{\p^2\varphi}{\p q_{ij}\p q_{lk}}-\sum_{k',l'}\delta_{il}\delta_{kk'}q_{l'j}\dfrac{\p\varphi}{\p q_{l'k'}}-\sum_{k',l'}\delta_{ll'}\delta_{kj}q_{ik'}\dfrac{\p\varphi}{\p q_{l'k'}}-\sum_{k',l'}q_{ik'}q_{l'j}\dfrac{\p^2\varphi}{\p q_{l'k'}\p q_{lk}}\right).
	\end{align*}
	Substituting the above formulas into the Laplacian operator, we can derive \eqref{eq:LBO}.
\end{proof}

\subsection{Proofs of lemmas and omitted calculations} \label{subsec:proofs}
\subsubsection{Section \ref{sec:notations}}
We will provide main steps of proof of \eqref{eq:back}. Details can be found in \cite{yan1994global}. Remember that $\mb{Q}(t)$ is now not necessarily be orthogonal, but only invertible. 

First, by Lemma 2.1 in \cite{yan1994global}, an explicit solution formula of \eqref{eq:RED} is given by 
\begin{align}\label{eq:solution}
	\mb{P}(t)=e^{\mb At}\mb P(0)[\mb{I_n}+(e^{2\mb{A}t}-\mb{I_n})\mb{P}(0)]^{-1}e^{\mb At}.
\end{align}
Moreover, notice that $\mb{Q}^T(0)(e^{2\mb At}-\mb{I_n})\mb{Q}(0)$ is positive definite for all $t>0$, so it only adimts non-negative eigenvalues. Remember that $\mb{AB}$ and $\mb{BA}$ have same non-zero eigenvalues, so eigenvalues of $(e^{2\mb{A}t}-\mb{I_n})\mb{P}(0)=(e^{2\mb{A}t}-\mb{I_n})\mb{Q}(0)\mb{Q}^T(0)$ are non-negative. Thus $\mb{I_n}+(e^{2\mb{A}t}-\mb{I_n})\mb{P}(0)$ is invertible, and solution of \eqref{eq:mtrxODE_0} exists for all time. Moreover, \eqref{eq:solution} implies that rank$(\mb{P}(t))=n$ for all $t>0$. 

Second, we prove that under the assumption that $\mb{Q}(0)$ is invertible, the singular values of $\mb{Q}(t)$ are all positive and bounded from below uniformly for all $t>0$. Let orthogonal $\mb{U}$ satisfy
\begin{align*}
	\mb{Q}(0)\mb{Q}^T(0)=\mb{U}\mb{L}\mb{U}^T,
\end{align*}
here $\mb{L}$ is diagonal. Let $\mb{M}=\mb{U}e^{2\mb At}\mb{U}^T$, then by \eqref{eq:solution}, direct calculation yields
\begin{align*}
	\mb{P}(t)=e^{\mb At}\mb{U}^T(\mb{L}^{-1}+\mb{M}-\mb{I_n})^{-1}\mb{U}e^{\mb At}.
\end{align*} 
Because $\mb{Q}(0)$ is invertible and $e^{\mb At}\succ\mb{I_n}$, thus both $\mb{M-I_n}$ and $\mb {L}^{-1}$ are positive definite, so are $(\mb{L}^{-1}+\mb{M}-\mb{I_n})^{-1}$ and $\mb{P}(t)$. So $\mb{P}(t)$ is positive definite. Following the steps in Theorem 2.2 of \cite{yan1994global}, we can prove that singular values of $\mb{Q}(t)$ lower bounded uniformly for $t>0$. Let $\alpha>0$ be the lower bound.

Finally, let $v(t)=\|\mb{I_n}-\mb{Q}^T(t)\mb{Q}(t)\|_F^2$, direct calculation yields
\begin{align}
\dot{v}(t)=-4\mathrm{tr}((\mb{I_n}-\mb{Q}^T(t)\mb{Q}(t))\mb{Q}^T(t)\mb{A}\mb{Q}(t)(\mb{I_n}-\mb{Q}^T(t)\mb{Q}(t)))\leq -4\alpha^2\lambda_nv(t).
\end{align}  
By Gronwall's inequality, we proved exponential convergence under the Frobenius norm.

\subsubsection{Section \ref{sec:PCA}}
\begin{proof} [Proof of Lemma \ref{lmm:WH}]
	Suppose that $\mb{P_1}$ and $\mb{P_2}$ are orthogonal matrices which diagonalizes $\mb{M}$ and $\mb{N}$ respectively:
	\begin{align*}
	\mb{M}=\mb{P_1}^T\bm{\Lambda_1}\mb{P_1},\ \mb{N}=\mb{P_2}^T\bm{\Lambda_2}\mb{P_2},
	\end{align*} 
	here $\bm{\Lambda_1}$ and $\bm{\Lambda_2}$ are diagonal matrices and the entries on the diagonal line in descending order:
	\begin{align*}
	\bm{\Lambda_1}=\mathrm{diag}\{\lambda_{1,1},\lambda_{1,2},...\lambda_{1,n}\},\ \lambda_{1,1}\geq \lambda_{1,2}\geq ...\geq\lambda_{1,n},\\ \bm{\Lambda_2}=\mathrm{diag}\{\lambda_{2,1},\lambda_{2,2},...\lambda_{2,n}\},\ \lambda_{2,1}\geq \lambda_{2,2}\geq ...\geq\lambda_{2,n}.
	\end{align*}
	Notice that $\|\mb{PM}\|_F=\|\mb{MP}\|_F=\|\mb{M}\|$ holds for any orthogonal matrix $\mb{P}$, therefore 
	\begin{align*}
	\|\mb{M}-\mb{N}\|_F^2 &= \|\mb{P_1}^T\bm{\Lambda_1}\mb{P_1}-\mb{P_2}^T\bm{\Lambda_2}\mb{P_2}\|_F^2\\
	&= \|\mb{P_1}^T(\bm{\Lambda_1}\mb{P_1P_2}^T-\mb{P_1P_2}^T\bm{\Lambda_2})\mb{P_2}\|_F^2\\
	&= 	\|\bm{\Lambda_1}\mb{P_1P_2}^T-\mb{P_1P_2}^T\bm{\Lambda_2}\|_F^2.
	\end{align*}
	Let $\mb{Q}=\mb{P_1P_2}^T$, then $\mb{Q}$ is also an orthogonal matrix and 
	\begin{align}
	\|\mb{M}-\mb{N}\|_F^2 = \sum_{1\leq i,j\leq n}|q_{i,j}|^2(\lambda_{1,i}-\lambda_{2,j})^2.
	\end{align}
	Let $\mb{W}\in\R^{n\times n}$ be $w_{i,j}=|q_{i,j}|^2$ for $i,j=1,2,...,n$. Then $\mb{W}$ is a doubly stochastic matrix since $\mb{Q}$ is orthogonal. Here a matrix $\mb{W}$ is doubly stochastic if and only if both $\mb{W}$ and $\mb{W}^T$ are transition matrices. 
	
	Denote the set of all doubly matrices in $\R^{n\times n}$ as $\mathcal{D}_n$. Then $\mathcal{D}_n$ is a compact and convex set in $\R^{n\times n}$ under the Frobenius norm. Define the following functional for all doubly stochastic matrix $\mb{W}$:
	\begin{align}
	\phi(\mb{W}) := \sum_{1\leq i,j\leq n}w_{i,j}(\lambda_{1,i}-\lambda_{2,j})^2,
	\end{align} 
	then $\phi$ is a linear (hence convex) functional w.r.t. to $\mb{W}$ and
	\begin{align*}
	\|\mb{M}-\mb{N}\|_F^2\geq\min\limits_{\mb{W}\in\mathcal{D}_n}\phi(\mb{W}).
	\end{align*}
	By the celebrated theorem due to Birkhoff and von Neumann which states that the convex hull of permutation matrices in $\R^{n\times n}$ is $\mathcal{D}_n$, we know the that minimum of $\phi(\mb{W})$ is attained when $\mb{W}$ is a permutation matrix. Thus
	\begin{align}
	\min\limits_{\mb{W}\in\mathcal{D}_n}\phi(\mb{W}) = \min\limits_{\sigma\in S_n}\sum_{i=1}^n(\lambda_{1,\sigma(i)}-\lambda_{2,i})^2.
	\end{align}
	Here $S_n$ is the $n$th order permutation group. Remember that $\lambda_{1,i}$ and $\lambda_{2,i}$ are in descending order, by the rearrangement inequality, 
	\begin{align}
	\sum_{i=1}^n\lambda_{1,\sigma(i)}\lambda_{2,i}\leq \sum_{i=1}^n\lambda_{1,i}\lambda_{2,i}.
	\end{align} 
	Thus
	\begin{align*}
	\|\mb{M}-\mb{N}\|_F^2\geq \min\limits_{\mb{W}\in\mathcal{D}_n}\phi(\mb{W}) = \sum_{i=1}^n(\lambda_{1,i}-\lambda_{2,i})^2.
	\end{align*}
	This proves the inequality. The equality holds if and only if $\mb{W}=\mb{I_n}$, i.e. $\mb{M}$ and $\mb{N}$ can be diagonalized simultaneously and the eigenvalues are paired in order. 
\end{proof}

\subsubsection{Section \ref{sec:ode}}
\begin{proof} [Proof of Lemma \ref{lmm:lyapnov}] We prove
	$(i)$. First, we have 
	\begin{align} \label{eq:derivativeE}
	-\mathcal{E}'(\mb{Q};\mb{A},\mb{N}) = -2\mb{AQN}.
	\end{align}
	Then, using $\mathrm{tr}(\mb{MN})=\mathrm{tr}(\mb{MN}),\ \mb{M,N}\in\R^{n\times n}$ and $\mathrm{tr}(\mb{M})=\mathrm{tr}(\mb{M}^T)$, we know that 
	\begin{align*}
	\mathrm{tr}(\mb{N\dot{Q}}^T\mb{AQ})=\mathrm{tr}(\mb{AQ}\mb{N\dot{Q}}^T)=\mathrm{tr}(\mb{\dot{Q}N}\mb{Q}^T\mb{A})=\mathrm{tr}(\mb{NQ}^T\mb{A}\mb{\dot{Q}}),
	\end{align*}
	thus according to \eqref{def:F}, we have
	\begin{align*}
	\dfrac{\dd\mathcal{E}(t)}{\dd t} &= -\mathrm{tr}(\mb{N\dot{Q}}^T\mb{AQ})-\mathrm{tr}(\mb{NQ}^T\mb{A}\mb{\dot{Q}})\\
	&= -2\mathrm{tr}(\mb{AQN}\mb{\dot{Q}}^T)\\
	&= -2\mathrm{tr}\left( \sum_{j=1}^n\sum_{k=1}^{j-1}\mb{AQNE_k}\mb{Q}^T\mb{AQE_j}\mb{Q}^T - \mb{AQNE_j}\mb{Q}^T\mb{AQE_k}\mb{Q}^T\right)\\
	&= -2\mathrm{tr}\left( \sum_{j=1}^n\sum_{k=1}^{j-1}\mb{E_jQ}^T\mb{AQNE_k}\mb{Q}^T\mb{AQ} - \mb{E_kQ}^T\mb{AQNE_j}\mb{Q}^T\mb{AQ}\right)\\
	&= -2\mathrm{tr}\left( \sum_{j=1}^n\sum_{k=1}^{j-1}\mb{E_j^2Q}^T\mb{AQNE_k^2}\mb{Q}^T\mb{AQ} - \mb{E_k^2Q}^T\mb{AQNE_j^2}\mb{Q}^T\mb{AQ}\right)\\
	&= -2\mathrm{tr}\left[ \sum_{j=1}^n\sum_{k=1}^{j-1}\left(\mb{E_jQ}^T\mb{AQE_k}\right)\mb{N}\left(\mb{E_k}\mb{Q}^T\mb{AQE_j}\right) -\left(\mb{E_kQ}^T\mb{AQE_j}\right)\mb{N}\left(\mb{E_j}\mb{Q}^T\mb{AQE_k}\right)\right]\\
	&= -2\sum_{j=1}^n\sum_{k=1}^{j-1}\mu_k(\mb{q_k}(t)\cdot \mb{Aq_j}(t))^2-\mu_j(\mb{q_k}(t)\cdot \mb{Aq_j}(t))^2\\
	&=-2\sum_{j=1}^n\sum_{k=1}^{j-1}(\mu_k-\mu_j)(\mb{q_k}(t)\cdot \mb{Aq_j}(t))^2\leq 0.
	\end{align*}
	The equality holds if and only if $\mb{q_k}\cdot \mb{Aq_j}=0$ for all $j\neq k, j,k=1,2,...,n$. Equivalently, $\mb{q_j},\ j=1,2,...,n$ are unit eigenvectors of $\mb{A}$, i.e., $\mb Q\in E$. This also proves $(ii)$
	
	Then we prove $(iii)$.
	Suppose that $\mb{Q}\in O(n)$ satisfies $\mb{F}(\mb Q)=0$, then
	\begin{align*}
	\mb{F(Q)}= \mb{Q}\left(
	\sum_{j=1}^n\sum_{k=1}^{j-1}\mb{E_j}\mb{Q}^T\mb{AQE_k}-\mb{E_k}\mb{Q}^T\mb{AQE_j}\right)=\mb{0}.
	\end{align*}
	Thus for all $j\neq k,j,k=1,2,...,n$, we have $\mb{E_j}\mb{Q}^T\mb{AQE_k}=0$, or equivalently $\mb{q_j}\cdot \mb{Aq_k}=0$. Thus $\mb{F}(\mb{Q})=0$ if and only if  $\mb{Q}\in E$.

\end{proof}

\begin{proof} [Proof of Lemma \ref{lmm:representation}]
	From \eqref{eq:rewriteODE}, we have
	\begin{align}
	\dfrac{\dd}{\dd t}(e^{-\mb{A}t}\mb{Q})+e^{-\mb{A}t}\mb{Q}\mb{T}(\mb{Q})=0,
	\end{align}
	or column-wisely 
	\begin{align} \label{eq:column}
	\dfrac{\dd}{\dd t}e^{-\mb{A}t}\mb{q_k} + 2\sum_{j=1}^{k-1}(\mb{q_j}\cdot \mb{Aq_k})e^{-\mb{A}t}\mb{q_j}+(\mb{q_k}\cdot\mb{Aq_k})e^{-\mb{A}t}\mb{q_k}=0,\ k=1,2,...,n.
	\end{align}
	
	First compute $\mb{q_1}$. Let $k=1$, we have
	\begin{align}
	\dfrac{\dd}{\dd t}e^{-\mb{A}t}\mb{q_1} +(\mb{q_1}\cdot\mb{Aq_1})e^{-\mb{A}t}\mb{q_1}=0.
	\end{align}
	Denote $g_{1,1}(t)=e^{-\int_0^t\mb{q_1(s)}\cdot\mb{Aq_1}(s)\dd s}$, then we have
	\begin{align}
	e^{-\mb{A}t}\mb{q_1}(t) = g_{1,1}(t)\mb{q_{1,0}}.
	\end{align}
	Here $\mb{q_{1,0}}$ is the first column of the initial value $\mb{Q_0}$ (see Section \ref{sec:notations}). By Lemma \ref{lmm:orthogonality}, we know that $\|\mb{q_1}(t)\|=1$, so
	\begin{align}
	g_{1,1}(t)=\dfrac{1}{\|e^{\mb{A}t}\mb{q_{1,0}}\|}.
	\end{align} 
	This solves $\mb{q_1}$ and we know that the following claim holds for $j=1$: there exist $g_{l,j}(t), 1\leq l\leq j$ such that 
	\begin{align} \label{eq:reprformula}
	e^{-\mb{A}t}\mb{q_j}(t)=\sum_{l=1}^j\mb{q_{l,0}}g_{l,j}(t).
	\end{align}
	
	We will prove that \eqref{eq:reprformula} holds for all $j=1,2,...,n$. Define
	\begin{align} \label{def:f}
	f_k(t):=e^{\int_0^t\mb{q_k}(s)\cdot\mb{Aq_k}(s)\dd s},\  k=1,2,...,n.
	\end{align}
	Suppose that \eqref{eq:reprformula} holds for $j=k-1, k\geq 2$, then by \eqref{eq:column} and \eqref{eq:reprformula}, we have
	\begin{align*}
	\dfrac{\dd}{\dd t}\left(f_k(t)e^{-\mb{A}t}\right) &= -2\sum_{j=1}^{k-1}f_k(t)(\mb{q_j}\cdot\mb{Aq_k})e^{-\mb{A}t}\mb{q_j}\\
	&= -2f_k(t)\sum_{l=1}^{k-1}\left[\sum_{j=l}^{k-1}g_{l,j}(t)(\mb{q_j}\cdot\mb{Aq_k})\right]\mb{q_{l,0}}.
	\end{align*}
	Integrating on both sides yields
	\begin{align}
	e^{-\mb{A}t}\mb{q_k}(t) = \dfrac{1}{f_k(t)}\mb{q_{k,0}}-\sum_{l=1}^{k-1}\left[\int_0^t\dfrac{2f_k(s)}{f_k(t)}\sum_{j=l}^{k-1}g_{j,l}(s)(\mb{q_j}(s)\cdot \mb{Aq_k}(s))\dd s\right]\mb{q_{l,0}}.
	\end{align}
	Thus \eqref{eq:reprformula} also holds for $j=k$ and we derive a iteration formula for $g_{l,j},\ 1\leq l\leq j$:
	\begin{gather} \label{eq:Giteration}
	g_{l,k}(t)=	
	\left\{
	\begin{split}
	&\dfrac{1}{f_k(t)},\ \ \ \ \ \ \ \ \ \ \ \ \ \ \ \ \ \ \ \ \ \ \ \ \ \ \ \ \ \ \ \ \ \ \ \ \ \ \ \ \ \ \ \ \ \ \ \ \ l=k,\\
	&-2\int_0^t\dfrac{f_k(s)}{f_k(t)}\sum_{j=l}^{k-1}g_{j,l}(s)(\mb{q_j}(s)\cdot \mb{Aq_k}(s))\dd s,\ 1\leq l\leq k-1.
	\end{split}\right.
	\end{gather}
	In fact, $g_{l,k}(t),1\leq l\leq k\leq n$ are exactly elements of $\mb{G}(t)$. Thus \eqref{eq:representation} holds, and $g_{k,k}(t)=1/f_k(t)>0$, i.e., elements of $\mb{G}(t)$ on the diagonal line are all positive.
	
	Finally, by Lemma \ref{lmm:orthogonality}, we have
	\begin{align}
	\mb{Q}(t)\mb{Q}^T(t) = e^{\mb{A}t}\mb{Q_0G}(t)\mb{G}^T(t)\mb{Q_0}^Te^{\mb{A}t}=\mb{I_n}.
	\end{align}
	Thus 
	\begin{align*} 
	\mb{G}(t)\mb{G}^T(t) = \mb{Q_0}^Te^{-2\mb{A}t}\mb{Q_0}.
	\end{align*}
	So \eqref{eq:G} holds.
\end{proof}

\begin{proof}[Proof of Lemma \ref{lmm:rank}]
	We prove by induction. Apparently \eqref{eq:rank} holds for $m=1$ by definition. Suppose that \eqref{eq:rank} holds for $m$, we prove that \eqref{eq:rank} also holds for $m+1$. Define
	\begin{align}
	I_{m,j}:=\{\sigma_k(\mb{Q}):\ \sigma_k(\mb{Q})\leq j,\ 1\leq k\leq m\},\  1\leq m,j\leq n.
	\end{align}
	Notice that
	\begin{align*}
	\mathrm{rank}\left(\mb{Q}[1,2,3,...,m+1;\ 1,2,...,j]\right)-\mathrm{rank}\left(\mb{Q}[1,2,3,...,m;\ 1,2,...,j]\right)= 0\ \mathrm{or}\ 1,
	\end{align*}
	so we discuss the following two cases:
	\begin{enumerate}
		\item If $j\geq\sigma_{m+1}(\mb{Q})$, we just need to prove that 
		\begin{align*}
		\mathrm{rank}\left(\mb{Q}[1,2,3,...,m+1;\ 1,2,...,j]\right)-\mathrm{rank}\left(\mb{Q}[1,2,3,...,m;\ 1,2,...,j]\right)=1.
		\end{align*}
		By definition of $\sigma$, we know that column vectors of $\mb{Q}[1,2,...,m;\ I_{m,j}]$ are linearly independent. Notice that by induction hypothesis, 
		\begin{align*}
		&\ \ \ \ \mathrm{rank}\left(\mb{Q}[1,2,3,...,m;\ 1,2,...,j]\right)\\
		&=m-\mathrm{card}(\{k:\ \sigma_k(\mb{Q})>j,\ 1\leq k\leq m\})\\
		&=\mathrm{card}(I_{m,j}).
		\end{align*}
		Thus column vectors of $\mb{Q}[1,2,...,m;\ I_{m,j}]$ form a basis of the column space of $\mb{Q}[1,2,...,m;\ 1,2,...,j]$. However, because $j\geq\sigma_{m+1}(\mb{Q})$, thus $\mb{Q}[1,2,...,m+1;\ \sigma_{m+1}(\mb{Q})]$ is linearly independent with column vectors of $\mb{Q}[1,2,...,m+1;\ I_{m,j}]$, thus \begin{align*}
		\mathrm{rank}\left(\mb{Q}[1,2,3,...,m+1;\ 1,2,...,j]\right)-\mathrm{rank}\left(\mb{Q}[1,2,3,...,m;\ 1,2,...,j]\right)= 1.
		\end{align*}
		
		\item If $j<\sigma_{m+1}(\mb{Q})$, then 
		\begin{gather*}
		\mathrm{card}(\{k:\ \sigma_k(\mb{Q})>j,\ 1\leq k\leq m+1\})-\mathrm{card}(\{k:\ \sigma_k(\mb{Q})>j,\ 1\leq k\leq m\})=1.
		\end{gather*}
		Consider integer $j'$ such that $1\leq j'\leq j$.  Because $j'<\sigma_{m+1}(\mb Q)$, so $\mb{Q}[1,2,...,m+1;\ j']$ can be linearly represented by column vectors of $\mb{Q}[1,2,...,m+1;\ \sigma_1(\mb{Q}),\ \sigma_2(\mb{Q}),\ ...,\ \sigma_{m}(\mb{Q})]$.
		Suppose that
		\begin{align} \label{eq:linearrepre}
		\mb{Q}[1,2,...,m+1;\ j']=\sum_{k=1}^my_k\mb{Q}[1,2,...,m+1;\ \sigma_k(\mb{Q})].
		\end{align}
		If focusing on the first $m$ rows, we have 
		\begin{align*}
		\mb{Q}[1,2,...,m;\ j']=\sum_{k=1}^my_k\mb{Q}[1,2,...,m;\ \sigma_k(\mb{Q})].
		\end{align*}
		By definition of $\sigma$, we know that for any $1\leq j'\leq j$, the column vector $\mb{Q}[1,2,...,m;\ j']$ can be linearly represented by column vectors of $\mb{Q}[1,2,...,m;\ I_{m,j}]$ (see the argument in (i)). Uniqueness of linear representations determines that
		\begin{align}
		y_k=0\ \mathrm{if}\ \sigma_k(\mb{Q})\notin I_{m,j},\ k=1,2,...,m.
		\end{align}
		Back to \eqref{eq:linearrepre}, we know that
		$\mb{Q}[1,2,...,m+1;\ j']$ can be linearly represented by $\mb{Q}[1,2,...,m+1;\ I_{m,j}]$. Thus 
		\begin{align*}
		\mathrm{rank}\left(\mb{Q}[1,2,3,...,m+1;\ 1,2,...,j]\right)=\mathrm{card}(I_{m,j})=\mathrm{rank}\left(\mb{Q}[1,2,3,...,m;\ 1,2,...,j]\right).
		\end{align*}
		So \begin{align*}
		\mathrm{rank}\left(\mb{Q}[1,2,3,...,m+1;\ 1,2,...,j]\right)+\mathrm{card}(\{k:\ \sigma_k(\mb{Q})>j,\ 1\leq k\leq m+1\})=m+1.
		\end{align*}
	\end{enumerate}
\end{proof}

\begin{proof}[Proof of Lemma \ref{lmm:z_mrepresentation}]
	If $m=1$, by definition $q_{1,j}=0$ for all $1\leq j<\sigma_1(\mb{Q})$. So \eqref{eq:q_m} holds.
	
	If $2\leq m\leq n$, by definition, $\mb{Q}[1,2,3,...,m-1;\ \mb{i_{m-1}}]\in\R^{(m-1)\times (m-1)}$ is invertible, so $\mb{c_m}$ is uniquely determined. Moreover, because $\mb{Q}[1,2,...,m;\ \sigma_m(\mb{Q})]$ is linearly independent with $\mb{Q}[1,2,...,m;\ \sigma_k(\mb{Q})], 1\leq k\leq m-1$, so there exists $z_m$ such that
	\begin{align}\label{eq:i_m}
	\mb{Q}[m;\ \mb{i_m}]=\mb{c_m}\mb{Q}[1,2,...,m-1;\ \mb{i_{m-1}}] + (0,0,...,0,z_m,0,...,0).
	\end{align}
	Taking determinant on both sides yields
	\begin{align}
	z_m = \dfrac{\det(\mb{Q}[1,2,3,...,m;\ \mb{i_m}])}{\det(\mb{Q}[1,2,3,...,m-1;\ \mb{i_{m-1}}])}.
	\end{align}
	The only thing left to prove is that 
	\begin{align} \label{eq:left}
	\mb{Q}[m;\ 1,2,3,...,\sigma_m(\mb{Q})]-\mb{c_m}\mb{Q}[1,2,...,m-1;\ 1,2,3,...,\sigma_m(\mb{Q})]=(0,0,...,0,z_m).
	\end{align}
	If $\sigma_m(\mb{Q})=1$, then $\mb{c_m}=\mb{0}$ and $z_m=q_{m,1}$, \eqref{eq:left} holds. If $\sigma_m(\mb{Q})\geq 1$, denote $K_m=\{k: \sigma_k(\mb{Q})\leq \sigma_m(\mb{Q})-1, 1\leq k\leq m\}$. By \eqref{eq:i_m}, all entries on $k$-th column where $k\in K_m$ are zero. So we just need to consider those entries are not on these columns.
	
	Otherwise, suppose that there exists $j\notin K_m, 1\leq j< \sigma_m(\mb{Q})$ such that the entry on the $j$-th column is non-zero. 
	Denote $\mb{B}=\mb{C_m}\mb{Q}[1,2...,m;\ 1,2,3,...,\sigma_m(\mb{Q})-1]$ where
	\begin{align}
	\mb{C_m}=
	\begin{pmatrix}
	1 & 0 & 0 & ... & 0\\
	0 & 1 & 0 & ... & 0\\
	0 & 0 & 1 & ... & 0\\
	\vdots & \vdots & \vdots & \ddots & \vdots\\
	-c_{m,1} & -c_{m,2} &  -c_{m,3} & ... & 1
	\end{pmatrix}
	\end{align}
	is in $\R^{m\times m}.$ Then the L.H.S. of \eqref{eq:left} is exactly the $m$-th row of $\mb{B}$. Meanwhile, $\mb{B}$ has same rank with $\mb{Q}[1,2,...,m;\ 1,2,3,...,\sigma_m(\mb{Q})-1]$ since $\mb{C_m}$ is invertible. By Lemma \ref{lmm:rank}, we know that
	\begin{align*}
	\mathrm{rank}(\mb{B}) &= \mathrm{rank}(\mb{Q}[1,2,...,m;\ 1,2,3,...,\sigma_m(\mb{Q})-1])\\
	&= m - \mathrm{card}(\{k: \sigma_k(\mb{Q})>\sigma_m(\mb{Q})-1, 1\leq k\leq m \})\\
	&= \mathrm{card}(\{k: \sigma_k(\mb{Q})\leq \sigma_m(\mb{Q})-1, 1\leq k\leq m\})\\
	&= \mathrm{card}(K_m).
	\end{align*}
	By definition of $\sigma$, we know that $\mb{B}[1,2,...,m;\ \sigma_k(\mb{Q})]$ are linearly independent if $k\in K$. Meanwhile, for all $k\in K_m$, by \eqref{eq:i_m}, $\mb{B}[m;\ \sigma_k(\mb{Q})]=\mb{0}$. Remember that we assume $b_{m,j}$ is non-zero for some $j<\sigma_m(\mb{Q})$, so $\mb{B}[1,2,3,...,m; j]$ is linearly independent with $\mb{B}[1,2,...,m;\ \sigma_k(\mb{Q})],\ k\in K_m$, hence 
	\begin{align}
	\mathrm{card}(K_m)=\mathrm{rank}(\mb{B})\geq 1+\mathrm{card}(K_m)
	\end{align}
	which is a contradiction. So \eqref{eq:left} holds.
\end{proof}
\begin{remark}
	The matrix $\mb{C_m}$ can be interpreted as row transformations on $\mb{Q}[1,2,...,m;\ 1,2,...,\sigma_m(\mb{Q})-1]$ to clear all entries on the $m$-th row, the $k$-th column, $k\in K_m$. The aim of the last part of the proof is to prove that not only these columns, but also all columns are cleared.
\end{remark}

\subsection{Linearization near stable points} \label{subsec:linearization} We will prove that the asymptotic stable points of \eqref{eq:ndODE} are $\mb Q^*$ defined in \eqref{eq:stable}. Without loss of generality, we just consider the case where $\epsilon_i=1,\ i=1,2...,n$ in \eqref{eq:ndODE}.

Consider equilibrium $\mb {Q_1}=[e_{\tau(1)},e_{\tau(2)},...,e_{\tau(n)}]$, here $\tau(i), i=1,2,...,n$ is a permutation of $1,2,...,n$. We will prove that if $\mb {Q_1}$ is table, then $\tau(i)=i$ holds for all $i=1,2,...,n$. We first consider linearization of $\mb q_1$ which reads as
\begin{align*}
	\dot{\mb {q_1}}&=\mb A(\mb{q_1}-\mb{e}_{\tau(1)})-[(\mb {e}_{\tau(1)}^T\mb A\mb{e}_{\tau(1)})\mb{I_n}+2\lambda_{\tau(1)}\mb{e}_{\tau(1)}\mb{e}_{\tau(1)}^T](\mb{q_1}-\mb{e}_{\tau(1)})\\
	&=(\mb A-\lambda_{\tau(1)}\mb{I_n}-2\lambda_{\tau(1)}\mb{E}_{\tau(1)})(\mb{q_1}-\mb{e}_{\tau(1)}).
\end{align*}

Remember that $\mb A$ is assumed to be diagonal with entries on diagonal line aligned in a descending order, so $\mb {B_1}=\mb A-\lambda_{\tau(1)}\mb{I_n}-2\lambda_{\tau(1)}\mb{E}_{\tau(1)}$ is also diagonal. Let $\mb {B_1}=\mathrm{diag}\{b_{1,1},b_{1,2},...,b_{1,n}\}$, then 
\begin{gather}
	b_j=	
	\left\{
	\begin{split}
	&\lambda_j-\lambda_{\tau(1)},\ \ \ \ \ j\neq\tau(1),\\
	&-2\lambda_{\tau(1)},\ \ \ \ \ \ j=\tau(1).
	\end{split}\right.
\end{gather}
Because this equilibrium is stable, we have $\lambda_j<\lambda_{\tau(1)}$ for all $j\neq \tau(1)$. Thus $\lambda_{\tau(1)}$ is the largest one in $\lambda_i,\ i=1,2,...,n$. Thus $\tau(1)=1.$

Then, we prove $\tau(j)=j$ for $j\geq 2$ by induction. Suppose that $\tau(i)=i$ holds for all $1\leq i\leq j-1$. Therefore, linearization of $\mb{q_j}$ reads as 
\begin{gather}\label{eq:linearization}
	\dot{\mb{q_j}}=\left(\mb A-\lambda_{\tau(j)}\mb{I_n}-2\lambda_{\tau(j)}\mb E_{\tau(j)}-2\sum_{i=1}^{j-1}\lambda_i\mb{E_i}\right)(\mb{q_j}-\mb e_{\tau(j)})-2\left(\sum_{i=1}^{j-1}\lambda_{\tau(j)}\mb E_{\tau(j),i}(\mb{q_i}-\mb{e_i})\right).
\end{gather}
Again, $\mb{B_j}=\mb A-\lambda_{\tau(j)}\mb{I_n}-2\lambda_{\tau(j)}\mb E_{\tau(j)}-2\sum_{i=1}^{j-1}\lambda_i\mb{E_i}$ is a diagonal matrix. Let $\mb{B_j}=\{b_{j,1},b_{j,2},...,b_{j,n}\}$, then 
\begin{gather}
b_{j,i}=
\left\{
	\begin{split}
	&-\lambda_{\tau(j)}-\lambda_i,\ i<j,\\
	&\lambda_i-\lambda_{\tau(j)},\ \ \ \ \ i\geq j, i\neq \tau(j),\\
	&-2\lambda_{\tau(j)},\ \ \ \ \ \ i=\tau(j).
	\end{split}\right.
\end{gather}
Again, $b_{j,i}, i=1,2,...,n$ are negative due to asymptotic stability. Thus $\lambda_i<\lambda_{\tau(j)}$ holds for all $i\geq j, i\neq\tau(j)$. Thus $\tau(j)=j$.

Therefore, by induction, we proved that $\tau(i)=1, i=1,2,...,n.$ So $\mb {Q_1}$ is asymptotic stable if and only if it is in the set defined in \eqref{eq:stable}.

\begin{remark}
	In \eqref{eq:linearization}, we see that residues $\mb{q_i}-\mb{e_i}.i=1,2,...,j-1$ also contribute to the evolution of $\mb{q_j}$. This indicates that the convergence rate of $\mb{q_j}$ depends on those of $\mb{q_i},i=1,2,...,j-1$ and can not exceed them. This is clearly observed in Theorem \ref{thm:expconv}
\end{remark}
\end{document}